\def\mathunderaccent#1#2 {\let\theaccent#1\skewfactor#2
\mathpalette\putaccentunder}
\def\putaccentunder#1#2{\oalign{$#1#2$\crcr\hidewidth
\vbox to.2ex{\hbox{$#1\skew\skewfactor\theaccent{}$}\vss}\hidewidth}}
\def\name{\mathunderaccent\tilde-3 }
\newcommand{\forces}{\Vdash} 
\newcommand{\con}{{\mathfrak c}}
\newcommand{\can}{{}^{\omega}2} 
\newcommand{\lh}{\ell g\/} 
\newcommand{\rest}{{\restriction}}
\newcommand{\rng}{{\rm rng}}
\newcommand{\conc}{{}^\frown\!}
\newcommand{\vtl}{\vartriangleleft} 
\newcommand{\vare}{\varepsilon}
\newcommand{\cf}{{\rm cf}}
\newcommand{\rk}{{\rm rk}}
\newcommand{\rksp}{{\rm rk}^{\rm sp}}
\newcommand{\stnd}{{\rm stnd}}
\newcommand{\std}{{\rm std}}
\newcommand{\ndrk}{{\rm ndrk}_\iota}
\newcommand{\NDRK}{{\rm ndrk}_\iota}
\newcommand{\Mtk}{{{\mathbf M}_{\bar{T},\iota}}}
\newcommand{\Mtks}{{{\mathbf M}_{\bar{T}^*,\iota}}}
\newcommand{\fMtk}{{{\mathbf M}^n_{\bar{t},\iota}}}
\newcommand{\fMtp}{{{\mathbf M}^{n^p}_{\bar{t}^p,\iota}}}
\newcommand{\YZR}{{\mathcal{YZR}}}
\newcommand{\cA}{{\mathcal A}}
\newcommand{\cB}{{\mathcal B}}
\newcommand{\gb}{{\mathfrak b}}
\newcommand{\bj}{{\mathbf j}}
\newcommand{\bk}{{\mathbf k}}
\newcommand{\cL}{{\mathcal L}}
\newcommand{\cM}{{\mathcal M}}
\newcommand{\bbM}{{\mathbb M}}
\newcommand{\bmm}{{\mathbf m}}
\newcommand{\bn}{{\mathbf n}}
\newcommand{\cP}{{\mathcal P}}
\newcommand{\bbP}{{\mathbb P}}
\newcommand{\mbR}{{\mathbb R}}
\newcommand{\cS}{{\mathcal S}}
\newcommand{\bV}{{\mathbf V}}
\newcommand{\bbZ}{{\mathbb Z}}
\newtheorem{theorem}{Theorem}[section] 
\newtheorem{claim}{Claim}[theorem]
\newtheorem{lemma}[theorem]{Lemma} 
\newtheorem{proposition}[theorem]{Proposition} 
\newtheorem{corollary}[theorem]{Corollary} 
\newtheorem{observation}[theorem]{Observation} 
\theoremstyle{definition}
\newtheorem{problem}[theorem]{Problem} 
\newtheorem{definition}[theorem]{Definition}
\newtheorem{hypothesis}[theorem]{Assumption}
\newtheorem{example}[theorem]{Example}
\theoremstyle{remark}
\newtheorem{remark}[theorem]{Remark}
\begin{document}

\title{Borel sets without perfectly many overlapping translations, II}

\author{Andrzej Ros{\l}anowski}
\address{Department of Mathematics\\
University of Nebraska at Omaha\\
Omaha, NE 68182-0243, USA}
\email{aroslanowski@unomaha.edu}

\author{Saharon Shelah}
\address{Institute of Mathematics\\
 The Hebrew University of Jerusalem\\
 91904 Jerusalem, Israel\\
 and  Department of Mathematics\\
 Rutgers University\\
 New Brunswick, NJ 08854, USA}
\email{shelah@math.huji.ac.il}
\urladdr{http://shelah.logic.at}

\thanks{Publication 1170 of the second author.\\
The first author thanks the National Science Fundation for supporting his
visit to Rutgers University where this research was carried out, and the 
Rutgers University for their hospitality.\\
Saharon Shelah thanks the Israel Science Foundation for their grant
1838/19.\\
Both authors would like to thank the referees for their valuable
comments which helped to improve the manuscript.}     

\subjclass{Primary 03E35; Secondary: 03E15, 03E50}
\date{September, 2021}

\begin{abstract}
  For a countable ordinal $\vare$ and an integer $\iota\geq 2$ we construct
  a $\Sigma^0_2$ subset of the Cantor space $\can$ for which one may force
  $\aleph_\vare$ translations with intersections of size $\geq 2\iota$, but
  such that it has no perfect set of such translations in any ccc
  extension. These sets have uncountably many translations with
  intersections of size $\geq 2\iota$ in ZFC, so this answers \cite[Problem
  3.4]{RoRy18}.
\end{abstract}

\maketitle 

\section{Introduction}
The existence of Borel sets with large squares but no perfect squares
was studied and resolved in Shelah \cite{Sh:522}. We say that a set
$B\subseteq \can\times\can$ contains a $\mu$--square (perfect square,
respectively), if there is a set $X$ of cardinality $\mu$ (a perfect
set $X$, respectively) such that $X\times X\subseteq B$. It was shown
in \cite[Section 1]{Sh:522} that  
\begin{quotation}
it is consistent that for every ordinal $\alpha<\omega_1$, there is a Borel
subset of $\can\times\can$ containing an $\aleph_\alpha$--square but no
perfect square.    
\end{quotation}
As a matter of fact the problem was given a more complete answer. A rank on   
models in a countable vocabulary (called here {\em a splitting rank\/}, see
Definition \ref{defofrank}) occured to be closely related to the question
when we can force $\Sigma^0_2$ sets with $\mu$--squares but without perfect
squares.  The first $\lambda$,  called $\lambda_{\omega_1}$, such that there
is no model with universe $\lambda$, countable vocabulary and countable rank
is a cutting point here. Every $\Sigma^1_1$ set containing a
$\lambda_{\omega_1}$--square must contain a perfect square. On the other
hand for each cardinal $\mu< \lambda_{\omega_1}$ some ccc forcing notion
adds a $\Sigma^0_2$ set containing a $\mu$--square but no perfect
square. The cardinal  $\lambda_{\omega_1}$ is quite mysterious: it satisfies 
$\aleph_{\omega_1} \leq \lambda_{\omega_1}\leq \beth_{\omega_1}$ and 
(its close relative) cannot be increased by ccc forcing, but not much
more is known. 

Thinking about subsets of the plane as relations, one may wonder for what
kinds of relations we have similar results. Several questions may be reduced to
the existence of large squares for special kinds of Borel subsets of
$\can\times \can$. For instance, for $A\subseteq \can$ let the {\em spectrum
  of translation $k$--disjointness of $A$\/} be defined as    
\[\std_k (A)=\{(x,y)\in\can\times\can: |(A+x)\cap (A+y)|\leq k\}.\]
Then a $\mu$--square included in $\std_k(A)$ corresponds to a family of
$\mu$ many translations of $A$ with pairwise intersections 
of  size $\leq k$, and for $k=0$ this would be $\mu$ many pairwise 
disjoint translations. Interest in Borel sets with $\mu\geq \aleph_1$
pairwise disjoint translations but without any perfect set of such
translations is motivated by several works in literature. For instance,
Balcerzak, Ros{\l}anowski and  Shelah \cite{BRSh:512} studied the
$\sigma$--ideal of subsets of $\can$ generated by Borel sets with a perfect
set of pairwise disjoint translations. A generalization of this direction
follows Darji and Keleti \cite{UdKe03}, Elekes and Stepr\={a}ns
\cite{ElSt04},  and Zakrzewski \cite{Zak13}. They studied perfectly
$k$--small sets which for a finite $k$ can be described as follows. A set
$A\subseteq \can$ is perfectly $k$--small if there is a perfect set
$P\subseteq \can$ such that for distinct $x_0,\ldots,x_{k-1}\in P$ the
intersection $(A+x_0)\cap\ldots\cap (A+x_{k-1})$ is empty. Elekes and Keleti
\cite{ElKe15} studied decompositions of the real line into pairwise disjoint
Borel pieces so that each piece is closed under addition and in this context
they explicitly asks \cite[Question 4.5]{ElKe15}:
\begin{quotation}
Suppose that a Borel subset of $\mbR$ has uncountably many pairwise disjoint
translates. Does it also have continuum many pairwise disjoint translates?   
\end{quotation}
If we want to answer the above question by a direct application of
\cite[Section 1]{Sh:522}, we could look for a $\Sigma^0_2$ set $A\subseteq 
\can$ such that $\std_0(A)$ contains a large square but no very large
square. However, in this situation, $\std_0(A)$ is a $\Pi^0_2$ subset of
$\can\times \can$ and, as it was noted in \cite[Remark 1.14]{Sh:522},
\begin{quotation}
if $B\subseteq \can\times \can$ is a $\Pi^0_2$ set and it contains an 
uncountable square, then it contains a perfect square.   
\end{quotation}
Therefore, forcing ``a bad Borel set'' for $\std_k$ must involve adding a 
$\Pi^0_2$ (or more complex) subset of $\can$, a task that at the moment
appears substantially more complicated than adding ``a bad $\Sigma^0_2$
set''. 

In developing tools to deal with $\std_k$ and perfect sets of disjoint
translations, we looked into the dual direction. Now, for a set $A\subseteq
\can$ and an integer $k$ we consider the {\em spectrum of  translation
  $k$--non-disjointness\/} of $A$,   
\[\stnd_k(A)=\{(x,y)\in\can\times\can: |(A+x)\cap (A+y)|\geq k\}.\] 
Then a $\mu$--square included in  $\stnd_k(A)$ determines a family of
$\mu$ many pairwise $k$--overlapping translations. The existence of
Borel sets with many, but not too many, pairwise $k$--overlapping
translations was studied in Ros{\l}anowski and Rykov \cite{RoRy18} and
Ros{\l}anowski and Shelah \cite{RoSh:1138}. In the latter work we carried
out arguments fully parallel to that of \cite[Section 1]{Sh:522} and we
showed that, e.g.,  for $\lambda<\aleph_{\omega_1}$ and an even integer
$k\geq 6$ there is a ccc forcing notion $\bbP$ adding a $\Sigma^0_2$  set
$B\subseteq\can$ with the property that   
\begin{itemize}
\item for some $H\subseteq \can$ of size $\lambda$, $|(B+h)\cap (B+h')|\geq 
  k$ for all $h,h'\in H$, but 
\item for every perfect set $P\subseteq \can$ there are $x,x'\in P$ with 
$|(B+x)\cap (B+x')|<k$. 
\end{itemize}
\bigskip

Our goal in the current article is to analyze the construction of
\cite{RoSh:1138} and split it into two steps: first constructing a
$\Sigma^0_2$ set (in ZFC) and then forcing non-disjoint translations to this
set. (A similar analysis for homogeneous sets for analytic colorings was
done by Kubi\'s and Shelah \cite{KbSh:802}.) In addition to better
understanding of the connection between the splitting rank and forcing
non-disjoint translations, we get an improvement over the older results,
extending them to even integers $k\geq 4$. Moreover, our analysis allows us
to answer  \cite[Problem 3.4]{RoRy18}:  there are $\Sigma^0_2$ subsets of
$\can$ with uncountably many pairwise 4--non-disjoint translations but with
no perfect set of such translations (cf Corollary \ref{answ}). In relation to
that problem, let us give an easy construction of a $\Sigma^0_2$ set
$B^*\subseteq\can \times\can$ containing an uncountable square but no
perfect square. This set, however, does not work for \cite[Problem
3.4]{RoRy18} as it is not of the form $\stnd_k(A)$.  
\bigskip

Fix a bijection $\pi:\omega\times\omega\longrightarrow\omega$ and define a 
set $B^*\subseteq \can\times\can$ as follows: 

\[\begin{array}{ll}
    (x,y)\in B^* \Leftrightarrow &x=y\ \vee\ \big(\exists k\in \omega \big) 
    \big (\forall n\in\omega \big) \big (  x(n)=y(\pi(n,k)) \big)\ \vee\\
    &\qquad \big (\exists k\in \omega \big) \big (\forall n\in \omega \big) 
      \big ( y(n)=x(\pi(n,k)) \big). 
    \end{array}\]

  \begin{proposition}
    \begin{enumerate}
\item There is an uncountable set $X\subseteq \can$ such that $X\times 
      X\subseteq B^*$. 
\item There is no perfect set $P\subseteq \can$ such that $P\times P 
      \subseteq B^*$. 
    \end{enumerate}
  \end{proposition}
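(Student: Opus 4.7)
The plan is to treat the two parts with very different tools: a transfinite recursion for (1) and a Baire category argument for (2).

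For (1), I build recursively on $\alpha<\omega_1$ pairwise distinct $y_\alpha\in\can$ satisfying: for every $\xi<\alpha$, $y_\xi$ appears as a column of $y_\alpha$, meaning there is some $k$ with $y_\alpha(\pi(n,k))=y_\xi(n)$ for all $n$. Granted this, $X=\{y_\alpha:\alpha<\omega_1\}$ has size $\aleph_1$ and $X\times X\subseteq B^*$ by the second clause in the definition of $B^*$. At a stage $\alpha\geq\omega$ I pick a bijection $g_\alpha:\omega\to\alpha$ and set
\[y_\alpha(\pi(n,k)):=y_{g_\alpha(k)}(n),\]
which defines $y_\alpha$ on all of $\omega$ because $\pi$ is a bijection; every $y_\xi$ with $\xi<\alpha$ is then the column of $y_\alpha$ indexed by $g_\alpha^{-1}(\xi)$. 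For distinctness from infinite earlier stages: if $y_\alpha=y_\xi$ with $\omega\leq\xi<\alpha$, matching columns yields $y_{g_\alpha(k)}=y_{g_\xi(k)}$ for all $k$, and the inductive distinctness forces $g_\alpha=g_\xi$, contradicting $\rng(g_\alpha)=\alpha\neq\xi=\rng(g_\xi)$. The finite initial segment $y_0,y_1,\ldots$ must be constructed by hand so that each $y_n$ contains $y_0,\ldots,y_{n-1}$ among its columns and $y_n\neq y_m$ for $m<n$; this uses only that once some columns of $y_n$ have been specified, infinitely many columns remain free, giving ample room to steer $y_n$ away from any finite list of forbidden sequences.

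For (2), suppose towards a contradiction that a perfect $P\subseteq\can$ satisfies $P\times P\subseteq B^*$. Let $G_k:\can\to\can$ be the continuous $k$-th column map $G_k(z)(n):=z(\pi(n,k))$, and put
\[A_k:=\{(x,y)\in\can\times\can:x=G_k(y)\},\qquad B_k:=\{(x,y)\in\can\times\can:y=G_k(x)\}.\]
Each $A_k$ and $B_k$ is closed, and the hypothesis gives
\[P\times P\;\subseteq\;\Delta\cup\bigcup_{k\in\omega}A_k\cup\bigcup_{k\in\omega}B_k,\]
where $\Delta$ denotes the diagonal. Each of $\Delta\cap(P\times P)$, $A_k\cap(P\times P)$, $B_k\cap(P\times P)$ has empty interior in $P\times P$: indeed, if a basic product $(U\cap P)\times(V\cap P)\neq\emptyset$ were contained in $A_k$, then fixing any $y_0\in V\cap P$ would force $U\cap P\subseteq\{G_k(y_0)\}$, contradicting the fact that nonempty relatively open subsets of the perfect set $P$ are infinite; the cases of $B_k$ and $\Delta$ are analogous. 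But $P\times P$ is a nonempty compact Polish space, hence Baire, and so it cannot be a countable union of closed nowhere dense sets, a contradiction.

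The main technical wrinkle is the distinctness check in part (1): one has to prevent accidental coincidences at the small finite levels (for instance, had one naively set $y_1$'s columns all equal to $y_0$, then $y_1$ might coincide with $y_0$), and only then does the inductive step cleanly propagate distinctness through all the infinite ordinals. Part (2), by contrast, is essentially a single application of the Baire category theorem once the closed decomposition of $P\times P$ is written down.
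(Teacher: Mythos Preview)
Your Part~(2) is correct and takes a different route from the paper. The paper invokes Mycielski's theorem to pass to a perfect $P'\subseteq P$ on which membership in each closed relation $R_\ell$ is determined by a fixed finite initial segment, and then derives a contradiction by hand. Your Baire category argument is more direct: once one observes that each graph $A_k$, $B_k$ and the diagonal are closed and meet $P\times P$ in a nowhere dense set (since any nonempty relatively open subset of a perfect set is infinite), the Baire category theorem finishes immediately. Both approaches work; yours avoids citing an external lemma, while the paper's Mycielski-based argument is the one that generalizes to the more structured settings appearing later.

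Your Part~(1) follows the same outline as the paper (each $y_\alpha$ encodes its predecessors as columns), but your distinctness argument has a small gap. You show $y_\alpha\neq y_\xi$ for $\omega\leq\xi<\alpha$ via the column-matching trick, and you arrange distinctness among the finitely indexed $y_m$ by hand; but you never argue that $y_\alpha\neq y_m$ when $\alpha\geq\omega$ and $m<\omega$. The column-matching argument does not apply there, since $y_m$ was not built from a bijection $g_m:\omega\to m$. The fix is easy: for instance, arrange in the hand construction that each $y_m$ has two equal columns (say columns $m$ and $m{+}1$), whereas every $y_\alpha$ with $\alpha\geq\omega$ has pairwise distinct columns because $g_\alpha$ is a bijection and the $y_\beta$ for $\beta<\alpha$ are distinct by induction. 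The paper sidesteps the issue differently, by allowing the enumeration $\langle y_k:k<\omega\rangle$ at stage $\beta$ to include elements outside $\{x_\alpha:\alpha<\beta\}$ and using that freedom to make column~$0$ of $x_\beta$ different from column~$0$ of every earlier $x_\alpha$.
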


  \begin{proof}
(1)\quad We choose inductively a sequence $\langle 
x_\alpha:\alpha<\omega_1\rangle$ of distinct elements of $\can$ satisfying 
\begin{enumerate}
\item[$(\boxtimes)$] $\alpha<\beta<\omega_1\quad\Rightarrow\quad x_\alpha 
  \neq x_\beta\ \wedge\ \big  (\exists k\in \omega\big) \big (\forall n\in 
  \omega \big) \big(x_\alpha(n)= x_\beta(\pi(n,k)) \big)$.   
\end{enumerate}
So, when arriving to stage $\beta<\omega_1$, we first choose a sequence
$\langle  y_k:k<\omega\rangle\subseteq \can$  so that 
\begin{itemize}
\item $\{x_\alpha:\alpha<\beta\}\subseteq \{y_k:k<\omega\}$, and 
\item $\big(\forall \alpha<\beta \big) \big (\exists n<\omega \big) \big 
  (y_0(n)\neq x_\alpha(\pi(n,0)) \big)$ . 
\end{itemize}
Next we define 
\[x_\beta(i)=y_k(n)\mbox{ whenever }  i=\pi(n,k),\]
Note that $x_\beta$ satisfies the demand in $(\boxtimes)$ (for 
$\alpha<\beta$). 

After the inductive construction is completed, it should be clear that the 
set $X=\{x_\alpha: \alpha<\omega_1\}$ is uncountable and $X\times X
\subseteq B^*$. 
\medskip 

\noindent (2)\quad Assume towards contradiction that $P\subseteq \can$ is a 
perfect set such that $P\times P\subseteq B^*$. For $k<\omega$ let 
\[\begin{array}{ll}
R_{2k}=&\{(x,y)\in\can\times\can: \big(\forall 
         n\in \omega \big)\big ( y(n)=x(\pi(n,k)) \big)\},\\
R_{2k+1}=&\{(x,y)\in\can\times\can:\big(\forall 
           n\in \omega \big)\big ( x(n)=y(\pi(n,k)) \big)\}.    
\end{array}\] 
These are closed sets and $P\times P\subseteq \bigcup\limits_{\ell<\omega}
R_\ell\cup\{(x,x):x\in P\}$, so by Mycielski theorem \cite[Theorem 1, 
p. 141]{My64} (see also \cite[Lemma 2.4]{RoRy18}), there are a perfect set 
$P'\subseteq P$ and an increasing sequence of integers 
$0=n_0<n_1<n_2<n_3<\ldots$  such that 
\begin{enumerate}
\item[$(\heartsuit)$] for each $k<\omega$, $x,x',y,y'\in P'$ and $\ell\leq 
  k$, if $x'\rest n_k =x\rest n_k\neq y\rest n_k=y'\rest n_k$, then 
\[(x,y)\in R_\ell\ \Leftrightarrow\ (x',y')\in  R_\ell.\]  
\end{enumerate}
Take distinct $x,y\in P'$ and let $\ell$ be such that $(x,y)\in R_\ell$; by 
symmetry we may assume that $\ell$ is even, say $\ell=2i$. Choose $k>\ell$
such that $x\rest n_k\neq  y\rest n_k$ and pick $y'\in P'$ such 
that $y\neq y'$ and $y\rest n_k=y'\rest n_k$. It follows from $(\heartsuit)$
that $(x,y')\in R_\ell=R_{2i}$ and hence $y'(n)=x(\pi(n,i))=y(n)$ for all 
$n\in \omega$, a contradiction.  
\end{proof}
\bigskip

Every uncountable Borel subset $B$ of $\can$ has a perfect set of pairwise
non-disjoint translations (just consider a perfect set $P\subseteq B$ and
note that for $x,y\in P$ we have ${\mathbf 0},x+y\in (B+x)\cap (B+y)$). The
problem of many non-disjoint translations is more interesting if we demand
that the intersections have more elements. Note  that in $\can$, if
$x+b_0=y+b_1$ then also $x+b_1=y+b_0$. Consequently, if $x\neq y$ and
$|(B+x)\cap (B+y)|<\omega$, then $|(B+x)\cap (B+y)|$ is even.   
Therefore we will look at intersections of size $\geq 2\iota$ and (unlike in
\cite{RoSh:1138}) we will manage to deal here with any finite $\iota\ge 2$.  
\bigskip

We fully utilize the algebraic properties of $(\can,+)$, in particular the
fact that all elements of $\can$ are self-inverse. Independence
results for the general case of Abelian Polish groups is investigated
in the third paper of the series \cite{RoSh:1187}, however we do not carry out
any rank analysis there (leaving that aspect open). 
\bigskip

\noindent{\bf Notation}:\qquad Our notation is rather standard and
compatible with that of classical textbooks (like Jech \cite{J} or
Bartoszy\'nski and Judah \cite{BaJu95}). However, in forcing we keep the
older convention that {\em a stronger condition is the larger one}.

\begin{enumerate}
\item For a set $u$ we let 
\[u^{\langle 2\rangle}=\{(x,y)\in u\times u:x\neq y\}.\]
\item For two sequences $\eta,\nu$ we write $\nu\vtl\eta$ whenever
  $\nu$ is a proper initial segment of $\eta$, and $\nu \trianglelefteq\eta$
  when either $\nu\vtl\eta$ or $\nu=\eta$.  
\item The set of all sequences of length $n$ and with values in $\{0,1\}$ is
denoted by ${}^n 2$ and we let ${}^{\omega>} 2=\bigcup\limits_{n<\omega}
{}^n 2$.
\item The Cantor space $\can$ of all infinite sequences with values 0 and 1 
is equipped with the natural product topology and the group operation of 
coordinate-wise addition $+$ modulo 2.   
\item A {\em tree\/} is a $\vtl$--downward closed set of sequences.  For a
  tree $T\subseteq {}^{\omega>} 2$ the set of all $\omega$--branches through
  $T$ is denoted $\lim(T)$. 
\item Ordinal numbers will be denoted by lower case initial letters of
  the Greek alphabet $\alpha,\beta,\gamma,\delta,\vare$. Finite ordinals
  (non-negative integers) will be denoted by letters
  $a,b,c,d,i,j,k,\ell,m,n,J,K,L,M,N$ and $\iota$. For integers $N^s<N^t$, 
  notations of the form $[N^s,N^t)$ are used to denote {\em intervals of  
    integers}. 
\item The Greek letter $\lambda$ will stand for an uncountable cardinal. 
\item For a forcing notion $\bbP$, all $\bbP$--names for objects in
  the extension via $\bbP$ will be denoted with a tilde below (e.g.,
  $\name{\tau}$, $\name{X}$), and $\name{G}_\bbP$ will stand for the
  canonical $\bbP$--name for the generic filter in $\bbP$.
\end{enumerate}

\section{Two Ranks from the Past}
Let us recall two closely related ranks used in previous papers. They are central
for the studies here too. 

\subsection{Splitting rank $\rksp$}
The results recalled in this subsection are quoted from \cite[Section
2]{RoSh:1138}, however they were first given in \cite[Section 1]{Sh:522}. 

Let $\lambda$ be a cardinal and $\bbM$ be a model with the universe
$\lambda$ and a countable vocabulary $\tau$.

\begin{definition}
\label{defofrank}
\begin{enumerate}
\item By induction on ordinals $\delta$, for finite non-empty sets
  $w\subseteq\lambda$ we define when $\rk(w,\bbM)\geq \delta$. Let
  $w=\{\alpha_0,\ldots,\alpha_n\} \subseteq\lambda$, $|w|=n+1$.  
 \begin{enumerate}
\item[(a)] $\rk(w, \bbM)\geq 0$ if and only if for every quantifier free
  formula $\varphi\in \cL(\tau)$ and each $k\leq n$, if  $\bbM\models
  \varphi[\alpha_0,\ldots,\alpha_k,\ldots,\alpha_n]$ then  the set  
\[\big\{\alpha\in \lambda:\bbM\models \varphi[\alpha_0,\ldots,\alpha_{k-1},
  \alpha,\alpha_{k+1}, \ldots,\alpha_n]\big\}\]  is 
uncountable;  
\item[(b)] if $\delta$ is limit, then $\rk(w,\bbM)\geq\delta$ if and only if 
  $\rk(w,\bbM)\geq\gamma$ for all $\gamma<\delta$;  
\item[(c)] $\rk(w,\bbM)\geq\delta+1$ if and only if for every quantifier
  free  formula $\varphi\in \cL(\tau)$ and each $k\leq n$, if $\bbM\models  
  \varphi[\alpha_0,\ldots,\alpha_k,\ldots,\alpha_n]$ then there is
  $\alpha^*\in\lambda\setminus w$ such that 
\[\rk(w\cup\{\alpha^*\},\bbM)\geq \delta\quad\mbox{ and }\quad \bbM\models  
  \varphi[\alpha_0,\ldots,\alpha_{k-1},\alpha^*,\alpha_{k+1},\ldots,\alpha_n].\] 
 \end{enumerate}
\end{enumerate}
\end{definition}

By a straightforward induction on $\delta$ one easily shows that if 
$\emptyset\neq v\subseteq w$ then 
\[\rk(w,\bbM)\geq\delta\geq\gamma \implies \rk(v,\bbM)\geq \gamma.\]
Hence we may define the rank functions on finite non-empty subsets of
$\lambda$. 

\begin{definition}
The rank $\rk(w,\bbM)$ of a finite non-empty set $w\subseteq\lambda$
is defined as:  
 \begin{itemize}
\item $\rk(w,\bbM)=-1$ if $\neg (\rk(w,\bbM)\geq 0)$,
\item $\rk(w,\bbM)=\infty$ if $\rk(w,\bbM)\geq \delta$ for all ordinals
  $\delta$,
\item for an ordinal $\delta$: $\rk(w,\bbM)=\delta$ if $\rk(w,\bbM)\geq
  \delta$ but $\neg(\rk(w,\bbM)\geq\delta+1)$.
\end{itemize}
\end{definition}

\begin{definition}
  \label{PRdef}
For an ordinal $\vare$ and a cardinal $\lambda$ let ${\rm
  NPr}^\vare(\lambda)$  be the following statement:
\begin{quotation}
``there is a model  $\bbM^*$ with the universe $\lambda$ and a countable
vocabulary $\tau^*$ such  that  $1+\rk(w,\bbM^*)\leq\vare$ for all $w\in
[\lambda]^{<\omega}\setminus\{\emptyset\}$.''      
\end{quotation}
Let ${\rm Pr}^\vare(\lambda)$ be the  negation of ${\rm NPr}^\vare(\lambda)$.
\end{definition}
(Note that ${\rm NPr}_\vare$ of \cite[Definition 2.4]{RoSh:1138}
differs from our ${\rm NPr}^\vare$: ``$\sup\{\rk(w,\bbM^*):
\emptyset\neq w\in [\lambda]^{<\omega} \}<\vare$  '' there is replaced
by ``$1+\rk(w,\bbM^*)\leq\vare$'' here.) 

\begin{proposition}
\label{cl1.7-522}
\begin{enumerate}
\item ${\rm NPr}^1(\aleph_1)$.
\item If ${\rm NPr}^\vare(\lambda)$, then ${\rm NPr}^{\vare+1}(\lambda^+)$.  
\item If ${\rm NPr}^\vare(\mu)$ for $\mu<\lambda$ and
  $\cf(\lambda)=\omega$, then ${\rm NPr}^\vare(\lambda)$. 
\item If $\alpha<\omega_1$, then ${\rm NPr}^{\alpha}(\aleph_\alpha)$ but  
${\rm Pr}^\alpha(\beth_{\omega_1})$ holds. 
\end{enumerate}
\end{proposition}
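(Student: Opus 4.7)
The plan is to treat $(1)$--$(3)$ as three explicit model constructions, and then to combine them into the ${\rm NPr}$ half of $(4)$ by induction on $\alpha<\omega_1$; the lower bound ${\rm Pr}^\alpha(\beth_{\omega_1})$ will require a separate partition-calculus argument.

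For $(1)$, I would take $\bbM^*$ on $\aleph_1$ with countably many unary function symbols $F_n$ chosen so that, for every $\alpha\ge\omega$, $n\mapsto F_n(\alpha)$ surjects onto $\alpha$. Given $\alpha<\beta$ in $\omega_1$, pick $n_0$ with $F_{n_0}(\beta)=\alpha$; the quantifier-free formula $F_{n_0}(x_1)=x_0$ is true of $(\alpha,\beta)$ but has perturbation set $\{\alpha\}$ at position $0$. Hence every pair (and by the same trick applied to its two largest elements every $w$ of size $\ge 2$) has rank $-1$, so every singleton has rank $<1$, since a rank-$\ge 1$ certificate would require a rank-$\ge 0$ extension. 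Thus $1+\rk(w,\bbM^*)\le 1$ on every finite non-empty $w$. For $(2)$, given $\bbM^*$ witnessing ${\rm NPr}^\vare(\lambda)$, I would fix for each $\beta<\lambda^+$ a bijection $\pi_\beta:\lambda\to(1+\beta)$ and install on $\lambda^+$ function symbols that both identify pre-images $\pi_\beta^{-1}(\gamma)$ for $\gamma<\beta$ and pull back each $\bbM^*$-symbol to the resulting ``fibre''. Passing from $\beta$ to an element of its fibre then consumes at most one rank level, while the $\bbM^*$-structure inside contributes at most $\vare$, giving $1+\rk\le\vare+1$. For $(3)$, I would partition $\lambda=\bigsqcup_{n<\omega} X_n$ with $|X_n|=\mu_n\nearrow\lambda$, tag each element by its piece via a unary function, and on $X_n$ install a pulled-back copy of a witness $\bbM^*_n$ for ${\rm NPr}^\vare(\mu_n)$. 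Since the piece index is quantifier-free accessible, the rank of any finite $w$ reduces to the maximum of the ranks of its components inside the $\bbM^*_n$'s, still giving $1+\rk\le\vare$.

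For the ${\rm NPr}$ half of $(4)$ I would induct on $\alpha<\omega_1$: $\alpha=0$ is trivial (the universe is countable), $\alpha=1$ is $(1)$, successors are $(2)$, and countable limits use $(3)$ (since $\cf(\aleph_\alpha)=\omega$ for countable limit $\alpha$ and ${\rm NPr}^\vare$ is monotone in $\vare$). For ${\rm Pr}^\alpha(\beth_{\omega_1})$ I would prove, by simultaneous induction on $\alpha<\omega_1$, the stronger claim that every model $\bbM$ on a universe of size $\ge\beth_\alpha$ in a countable vocabulary admits a finite non-empty $w$ with $\rk(w,\bbM)\ge\alpha$. The successor step would be by Erd\H{o}s--Rado: inside a $\beth_{\alpha+1}$-sized universe, an iteration of the partition theorem over the countably many quantifier-free formulas produces a homogeneous set of size $\beth_\alpha$ on which the inductive rank-$\alpha$ witness sits, and one additional element taken from outside the homogeneous part lifts that witness to rank $\alpha+1$. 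Countable limits are absorbed by amalgamating a cofinal $\omega$-sequence of witnesses.

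The hard part will be the Erd\H{o}s--Rado step in $(4)$: one must simultaneously homogenise countably many quantifier-free formulas and all positions inside the tuple, and then verify that the element produced on the homogeneous part increases the rank by exactly one. Constructions $(1)$--$(3)$ are essentially bookkeeping once the correct quantifier-free formulas and colourings are isolated, and the ${\rm NPr}$ half of $(4)$ is then a routine induction on $\alpha$.
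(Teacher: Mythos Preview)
The paper does not supply a proof of this proposition; it is quoted from \cite{RoSh:1138} (and originally from \cite{Sh:522}), so there is no in-paper argument to compare your proposal against. Your constructions for (1)--(3) are the standard ones and are correct, and the induction combining them into the ${\rm NPr}$ half of (4) is fine.

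Your sketch for ${\rm Pr}^\alpha(\beth_{\omega_1})$, however, has two genuine gaps. First, the inductive statement ``every model on a universe of size $\ge\beth_\alpha$ admits a finite $w$ with $\rk(w,\bbM)\ge\alpha$'' is false at $\alpha=0$: since $\beth_0=\aleph_0$, on a countable universe the formula $x_0=x_0$ already has only a countable solution set, so every nonempty $w$ has rank $-1$. The threshold must be shifted (the argument in \cite{Sh:522} is calibrated above $\aleph_1$, essentially using $\beth_\alpha(\aleph_0)^+$). Second, and more seriously, your successor step does not work as described. After Erd\H{o}s--Rado yields a large set $H$ homogeneous for the relevant quantifier-free colourings and the inductive hypothesis gives $w\subseteq H$ with $\rk(w)\ge\alpha$, taking ``one additional element from outside the homogeneous part'' does not lift the rank: the clause $\rk(w)\ge\alpha+1$ demands that for \emph{every} quantifier-free $\varphi$ true of $w$ and \emph{every} position $k$ there exist $\alpha^*\notin w$ with $\rk(w\cup\{\alpha^*\})\ge\alpha$ and the same $\varphi$ still true. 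A single outside point supplies none of this. The actual mechanism in \cite{Sh:522} is the opposite of what you wrote: the witnesses $\alpha^*$ are taken \emph{inside} $H$, and it is the indiscernibility of $H$ (homogeneity for all the relevant formulas simultaneously) that guarantees both that $\varphi$ persists and that $w\cup\{\alpha^*\}$ remains a subset of $H$, so the inductive rank bound applies to it as well. You should rewrite the successor step so that the rank increment is driven by indiscernibility within $H$, not by an external element.
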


\begin{definition}
Let $\tau^\otimes=\{R_{n,j}:n,j< \omega\}$ be a fixed relational
vocabulary where $R_{n,j}$ is an $n$--ary relational symbol (for
$n,j<\omega$).  
\end{definition}

\begin{definition}
\label{hypo2}
Assume that $\vare<\omega_1$ and $\lambda$ is an uncountable cardinal such
that  ${\rm NPr}^\vare(\lambda)$. By this assumption, we may fix a model
$\bbM(\vare,\lambda)= \bbM=(\lambda, \{R^\bbM_{n,j}\}_{n,j<\omega}) $ in the
vocabulary $\tau^\otimes$ with the  universe $\lambda$ such that:  
\begin{enumerate}
\item[$(\circledast)_{\rm a}$] for every $n$ and a quantifier free formula 
  $\varphi(x_0,\ldots,x_{n-1})\in \cL(\tau^\otimes)$ there is $j<\omega$ such
  that for all $\alpha_0,\ldots, \alpha_{n-1}\in \lambda$, 
\[\bbM\models\varphi[\alpha_0,\ldots,\alpha_{n-1}]\Leftrightarrow
  R_{n,j}[\alpha_0, \ldots, \alpha_{n-1}],\]
\item[$(\circledast)_{\rm b}$] the rank of every singleton is at least 0,  
\item[$(\circledast)_{\rm c}$] $1+\rk(v,\bbM)\leq\vare$ for every $v\in
  [\lambda]^{<\omega} \setminus \{\emptyset\}$.
\end{enumerate}
For a nonempty finite set $v\subseteq\lambda$ let
$\rk^{\rm sp}(v)=\rk(v,\bbM)$, and let $\bj(v)<\omega$ and
$\bk(v)<|v|$ be such that $R_{|v|,\bj(v)},\bk(v)$ witness
the rank of $v$. Thus letting $\{\alpha_0,\ldots, \alpha_k, \ldots
\alpha_{n-1}\}$ be the increasing enumeration of $v$ and $k=\bk(v)$ and
$j= \bj(v)$, we have 
\begin{enumerate}
\item[$(\circledast)_{\rm e}$] if $\rk^{\rm sp}(v)\geq 0$, then
  $\bbM\models R_{n,j}[\alpha_0,\ldots, \alpha_k,\ldots, \alpha_{n-1}]$
  but there is no $\alpha\in \lambda\setminus v$  such that  
\[\rk^{\rm sp}(v\cup\{\alpha\})\geq \rk^{\rm sp}(v)\ \mbox{ and }\ 
  \bbM\models R_{n,j} [\alpha_0,\ldots, \alpha_{k-1}, \alpha,
  \alpha_{k+1}, \ldots,\alpha_{n-1}],\]  
\item[$(\circledast)_{\rm f}$] if $\rk^{\rm sp}(v)=-1$, then
  $\bbM\models R_{n,j} [\alpha_0,\ldots,\alpha_k,\ldots, \alpha_{n-1}]$
  but the set
  \[\big\{\alpha\in\lambda:\bbM\models R_{n,j}[\alpha_0,\ldots,
    \alpha_{k-1},\alpha, \alpha_{k+1}, \ldots, \alpha_{n-1}]\big\}\] 
is countable.  
\end{enumerate}
We may and will also require that for $j=\bj(v)$, $n=|v|$ we have: 
\begin{enumerate}
\item[$(\circledast)_{\rm g}$] for every
  $\beta_0,\ldots,\beta_{n-1}<\lambda$  
\[\mbox{if }\ \bbM\models R_{n,j}[\beta_0,\ldots,\beta_{n-1}] \mbox{
    then }\  \beta_0<\ldots <\beta_{n-1}.\]  
\end{enumerate}
The choices above define functions $\bj:[\lambda]^{<\omega}\setminus \{
\emptyset\} \longrightarrow \omega$, $\bk:[\lambda]^{<\omega}\setminus  
\{\emptyset\} \longrightarrow \omega$, and $\rksp:[\lambda]^{<\omega}
\setminus \{\emptyset\} \longrightarrow \{-1\}\cup (\vare+1)$. 
\end{definition}

\subsection{Non-disjontness rank $\ndrk$}
Here we recall the rank measuring the easiness of building large sets of 
pairwise overlapping translations of a given $\Sigma^0_2$ set. The
definitions and results given here are  quoted after \cite[Section
3]{RoSh:1138}. Let us point out that Definition \ref{mtkDef} is a
slightly modified version of \cite[Definition 3.5]{RoSh:1138} -- we
added demand (f) here. The addition is needed for the precise rank
considerations when our ranks are finite (to eliminate ``disturbances
in rank'' by not important factors). It does not change the proofs of
the facts quoted here, however. 

We assume the following. 

\begin{hypothesis}
\label{hyp}
  \begin{enumerate}
\item $T_n\subseteq {}^{\omega>} 2$ is a tree with no maximal nodes
    (for $n<\omega$);
\item $B=\bigcup\limits_{n<\omega} \lim(T_n)$, $\bar{T}=\langle T_n:
  n<\omega \rangle$ and $2\leq\iota<\omega$;
\item there are distinct $\rho_0,\rho_1\in \can$ such that
  $\big|(\rho_0+B)\cap (\rho_1+B)\big|\geq 2\iota$.
  \end{enumerate}
\end{hypothesis}

\begin{definition}
  \label{mtkDef}
Let $\Mtk$ consist of all tuples 
\[\bmm=(\ell_\bmm,u_\bmm,\bar{h}_\bmm,\bar{g}_\bmm)=(\ell,u,\bar{h},\bar{g})\]
such that:
\begin{enumerate}
\item[(a)] $0<\ell<\omega$, $u\subseteq {}^\ell 2$ and $2\leq |u|$;
\item[(b)] $\bar{h}=\langle h_i:i<\iota\rangle$, $\bar{g}=\langle g_i:i<
  \iota\rangle$ and for each $i<\iota$ we have 
\[h_i:u^{\langle 2\rangle}\longrightarrow \omega\quad\mbox{ and }\quad 
    g_i:u^{\langle 2\rangle} \longrightarrow \bigcup_{n<\omega}( T_n\cap
    {}^\ell 2);\]
\item[(c)] $g_i(\eta,\nu)\in T_{h_i(\eta,\nu)}\cap {}^\ell 2$ for all
  $(\eta,\nu)\in u^{\langle 2\rangle}$, $i<\iota$;
\item[(d)] if $(\eta,\nu)\in u^{\langle 2\rangle}$ and $i<\iota$, then
  $\eta+ g_i(\eta,\nu) =\nu+ g_i(\nu,\eta)$;  
\item[(e)] for any $(\eta,\nu)\in u^{\langle 2\rangle}$, there are no
  repetitions in the sequence $\langle g_i(\eta,\nu),g_i(\nu,\eta):
  i<\iota\rangle$;
\item[(f)] there are $\langle F(\eta):\eta\in u\rangle$ and $\langle
  G_i(\eta,\nu): i<\iota\ \wedge\ (\eta,\nu)\in u^{\langle 2\rangle}
  \rangle$ such that 
 \[\begin{array}{r}
\eta\vtl F(\eta)\in\can\ \mbox{ and }\ g_i(\eta,\nu)\vtl G_i(\eta,\nu)\in
   \lim\big(T_{h_i(\eta,\nu)}\big)\\
\mbox{ and }\  F(\eta)+ G_i(\eta,\nu)  =F(\nu)+ G_i(\nu,\eta)
\end{array}\]
 (for $i<\iota$, $(\eta,\nu)\in u^{\langle 2\rangle}$). 
\end{enumerate}
\end{definition}

Note that by Assumption \ref{hyp}(3) the family $\Mtk$ is not empty. 

\begin{definition}
\label{traDef}
Assume $\bmm=(\ell,u,\bar{h},\bar{g})\in \Mtk$ and $\rho\in {}^\ell 2$. We
define $\bmm+\rho=(\ell',u',\bar{h}',\bar{g}')$ by
\begin{itemize}
\item $\ell'=\ell$, $u'=\{\eta+\rho:\eta\in u\}$,
\item $\bar{h}'=\langle h'_i:i<\iota\rangle$ where $h'_i:(u')^{\langle
    2\rangle} \longrightarrow \omega$ are such that
  $h'_i(\eta+\rho,\nu+\rho)=h_i(\eta,\nu)$ for $(\eta,\nu)\in u^{\langle
    2\rangle}$, 
\item $\bar{g}'=\langle g'_i:i<\iota\rangle$ where $g'_i:(u')^{\langle 2\rangle}
  \longrightarrow \bigcup\limits_{n<\omega} (T_n\cap {}^\ell 2)$ are such
  that 
\[g'_i (\eta+\rho, \nu+\rho)=g_i(\eta,\nu)\ \mbox{ for }(\eta,\nu)\in
  u^{\langle 2\rangle}.\] 
\end{itemize}
Also if $\rho\in\can$, then we set $\bmm+\rho=\bmm+(\rho\rest\ell)$. 
\end{definition}

\begin{observation}
  \begin{enumerate}
\item If $\bmm\in \Mtk$ and $\rho\in {}^{\ell_\bmm}2$, then $\bmm+\rho
  \in\Mtk$. 
\item For each $\rho\in\can$ the mapping
  $\Mtk\longrightarrow\Mtk:\bmm\mapsto\bmm+\rho$ is a bijection.
  \end{enumerate}
\end{observation}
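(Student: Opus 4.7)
Both parts amount to direct verification, exploiting the fact that $(\can,+)$ (and each $({}^\ell 2,+)$) is an abelian group in which every element is its own inverse. For part (1), I would check the six clauses of Definition \ref{mtkDef} for $\bmm+\rho$ in order. For part (2), I would derive the composition identity $(\bmm+\rho_1)+\rho_2=\bmm+(\rho_1+\rho_2)$ from Definition \ref{traDef}, from which $\bmm\mapsto\bmm+\rho$ is an involution because $\rho+\rho$ is the zero sequence.

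For (1)(a)--(c): translation by $\rho$ is a bijection of ${}^\ell 2$, so $|u'|=|u|\geq 2$; the functions $h'_i,g'_i$ are obtained by relabeling the arguments of $h_i,g_i$, so their codomains and the tree-membership requirement (c) transfer verbatim. For (1)(d) one adds $\rho$ to both sides of $\eta+g_i(\eta,\nu)=\nu+g_i(\nu,\eta)$. For (1)(e), observe that the sequence $\langle g'_i(\eta+\rho,\nu+\rho), g'_i(\nu+\rho,\eta+\rho):i<\iota\rangle$ is, by construction, literally $\langle g_i(\eta,\nu), g_i(\nu,\eta):i<\iota\rangle$, so the no-repetition condition is inherited.

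For (1)(f) I would pad $\rho$ with zeros to obtain $\hat{\rho}\in\can$ with $\hat{\rho}\rest\ell=\rho$, and set $F'(\eta+\rho):=F(\eta)+\hat{\rho}$ and $G'_i(\eta+\rho,\nu+\rho):=G_i(\eta,\nu)$, where $F,G_i$ witness (f) for $\bmm$. Then $F'(\eta+\rho)\rest\ell=\eta+\rho$, the branch $G'_i(\eta+\rho,\nu+\rho)$ lies in $\lim(T_{h_i(\eta,\nu)})=\lim(T_{h'_i(\eta+\rho,\nu+\rho)})$ and extends $g'_i(\eta+\rho,\nu+\rho)$, and the compatibility equation follows by adding $\hat{\rho}$ to both sides of the corresponding equation for $F,G_i$.

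For part (2), since by convention $\bmm+\rho$ depends on $\rho\in\can$ only through $\rho\rest\ell_\bmm$, it is enough to treat $\rho\in{}^\ell 2$. Unwinding Definition \ref{traDef} twice, $(\bmm+\rho)+\rho$ has $u''=\{\eta+\rho+\rho:\eta\in u\}=u$ and its functions satisfy $h''_i(\eta,\nu)=h'_i(\eta+\rho,\nu+\rho)=h_i(\eta,\nu)$ and likewise for $g''_i$; thus $(\bmm+\rho)+\rho=\bmm$, and the map $\bmm\mapsto\bmm+\rho$ is its own inverse. The only mildly delicate point is the choice of extension $\hat{\rho}$ in (1)(f); every other step is pure bookkeeping.
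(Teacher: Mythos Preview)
Your proposal is correct and is exactly the natural direct verification; the paper itself provides no proof for this Observation, treating it as routine.
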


\begin{definition}
\label{extDef}
Assume $\bmm,\bn\in\Mtk$. We say that {\em $\bn$ extends $\bmm$\/}
($\bmm\sqsubseteq \bn$ in short) if and only if:
\begin{itemize}
\item $\ell_\bmm\leq \ell_\bn$, $u_\bmm=\{\eta\rest\ell_\bmm:\eta\in u_\bn\}$,
  and 
\item for every $(\eta,\nu)\in (u_\bn)^{\langle 2\rangle }$ such that
  $\eta\rest \ell_\bmm \neq \nu\rest\ell_\bmm$ and each $i<\iota$ we have  
\[h^\bmm_i(\eta\rest \ell_\bmm,\nu\rest \ell_\bmm)= h^\bn_i(\eta,\nu)\quad
  \mbox{ and }\quad g^\bmm_i(\eta\rest \ell_\bmm,\nu\rest \ell_\bmm)=
  g^\bn_i(\eta,\nu)\rest \ell_\bmm.\] 
\end{itemize}
\end{definition}

\begin{definition}
\label{ndrkdef}
We define a function $\ndrk:\Mtk\longrightarrow {\rm ON}\cup\{\infty\}$
declaring inductively when $\ndrk(\bmm)\geq\alpha$ (for an ordinal
$\alpha$). 
\begin{itemize}
\item $\ndrk(\bmm)\geq 0$ always;
\item if $\alpha$ is a limit ordinal, then $\ndrk(\bmm)\geq \alpha
  \Leftrightarrow (\forall\beta<\alpha)(\ndrk(\bmm)\geq \beta)$;
\item if $\alpha=\beta+1$, then $\ndrk(\bmm)\geq \alpha$ if and only if for
  every $\nu\in u_\bmm$ there is $\bn\in \Mtk$ such that
  $\ell_\bn>\ell_\bmm$, $\bmm\sqsubseteq\bn$ and $\ndrk(\bn)\geq \beta$ and 
\[|\{\eta\in u_\bn:\nu\vtl\eta\}|\geq 2;\] 
\item $\ndrk(\bmm)=\infty$ if and only if $\ndrk(\bmm)\geq \alpha$ for all
  ordinals $\alpha$. 
\end{itemize}
We also define 
\[\NDRK(\bar{T})=\sup\{\ndrk(\bmm):\bmm\in\Mtk\}.\]
\end{definition}

\begin{lemma}
  [See {\cite[Lemma 3.10]{RoSh:1138}}]
  \label{lemonrk}
  \begin{enumerate}
\item The relation $\sqsubseteq$ is a partial order on $\Mtk$.
\item If $\bmm,\bn\in\Mtk$ and $\bmm\sqsubseteq\bn$ and $\alpha\leq
  \ndrk(\bn)$, then $\alpha\leq\ndrk(\bmm)$. 
\item The function $\ndrk$ is well defined.
\item If $\bmm\in\Mtk$ and $\rho\in\can$ then $\ndrk(\bmm)=\ndrk(\bmm+\rho)$. 
\item If $\bmm\in\Mtk$, $\nu\in u_\bmm$ and $\ndrk(\bmm)\geq \omega_1$, then
  there is an $\bn\in\Mtk$ such that $\bmm\sqsubseteq\bn$,
  $\ndrk(\bn)\geq\omega_1$, and  
\[|\{\eta\in u_\bn:\nu\vtl\eta\}|\geq 2.\]
\item If $\bmm\in\Mtk$ and $\infty>\ndrk(\bmm)=\beta>\alpha$, then there is
  $\bn\in \Mtk$ such that $\bmm\sqsubseteq \bn$ and $\ndrk(\bn)=\alpha$. 
\item If $\NDRK(\bar{T})\geq \omega_1$, then $\NDRK(\bar{T})=\infty$. 
\item Assume $\bmm\in\Mtk$ and $u'\subseteq u_\bmm$, $|u'|\geq 2$. Put
  $\ell'=\ell_\bmm$, $h_i'=h_i^\bmm\rest (u')^{\langle 2\rangle}$ and
  $g_i'=g_i^\bmm \rest (u')^{\langle 2\rangle}$ (for $i<\iota$), and let  $\bmm\rest
  u'=(\ell',u', \bar{h}', \bar{g}')$. Then $\bmm\rest u'\in \Mtk$
  and $\ndrk(\bmm)\leq \ndrk(\bmm\rest u')$. 
  \end{enumerate}
\end{lemma}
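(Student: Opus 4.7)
The plan is to dispatch the eight parts in turn. The conceptual engine behind the nontrivial parts (5) and (7) is that $\Mtk$ is countable: for each $\ell<\omega$ there are finitely many $u\subseteq{}^\ell 2$ with $|u|\ge 2$, and the data $\bar{h},\bar{g}$ take only countably many values on the finite domain $u^{\langle 2\rangle}$. The remaining parts amount to routine transfinite inductions on rank.

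Parts (1)--(4) and (8) are direct. For (1): reflexivity is obvious, antisymmetry forces $\ell_\bmm=\ell_\bn$ and hence $u_\bmm=u_\bn$ with identical $h,g$ data, and transitivity is a straightforward check of the equalities in Definition \ref{extDef}. For (2) I induct on $\alpha$: in the successor step $\alpha=\beta+1$, given $\nu\in u_\bmm$ I choose a lift $\nu'\in u_\bn$ with $\nu'\rest\ell_\bmm=\nu$, apply $\ndrk(\bn)\ge\beta+1$ at $\nu'$ to obtain $\bk\sqsupseteq\bn$ with $\ndrk(\bk)\ge\beta$ and two extensions of $\nu'$ in $u_\bk$; by transitivity $\bmm\sqsubseteq\bk$, and the two extensions of $\nu'$ are also extensions of $\nu$. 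Part (3) is built into the inductive definition. Part (4) follows from the observation that $\bmm\mapsto\bmm+\rho$ is an $\sqsubseteq$-preserving involution on $\Mtk$ that preserves the splitting condition. For (8) the tuple $\bmm\rest u'$ inherits clauses (a)--(f) of Definition \ref{mtkDef} from $\bmm$, and any witness $\bn\sqsupseteq\bmm$ for $\ndrk(\bmm)\ge\alpha+1$ at $\nu\in u'$ descends, by restricting $u_\bn$ to those $\eta$ with $\eta\rest\ell_\bmm\in u'$, to a witness for $\ndrk(\bmm\rest u')\ge\alpha+1$.

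For (5), given $\bmm$ with $\ndrk(\bmm)\ge\omega_1$ and $\nu\in u_\bmm$, I use $\ndrk(\bmm)\ge\alpha+1$ to pick, for each $\alpha<\omega_1$, some $\bn_\alpha\sqsupseteq\bmm$ with $\ell_{\bn_\alpha}>\ell_\bmm$, $\ndrk(\bn_\alpha)\ge\alpha$ and two extensions of $\nu$ in $u_{\bn_\alpha}$. Countability of $\Mtk$ supplies a single $\bn^*\in\Mtk$ equal to $\bn_\alpha$ for an uncountable (hence unbounded in $\omega_1$) set of $\alpha$'s, which gives $\ndrk(\bn^*)\ge\omega_1$ together with the required splitting above $\nu$. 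The argument for (7) is parallel: $\NDRK(\bar T)\ge\omega_1$ yields $\bmm_\alpha\in\Mtk$ with $\ndrk(\bmm_\alpha)\ge\alpha$ for each $\alpha<\omega_1$, and pigeonhole on the countable $\Mtk$ produces a single $\bmm^*$ with $\ndrk(\bmm^*)\ge\omega_1$. A transfinite induction on $\beta$ using (5) at each successor step then pushes $\ndrk(\bmm^*)\ge\beta$ for every ordinal $\beta$, giving $\ndrk(\bmm^*)=\infty$.

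For (6) I induct on $\beta=\ndrk(\bmm)<\infty$. The failure $\neg(\ndrk(\bmm)\ge\beta+1)$ supplies some $\nu^*\in u_\bmm$ such that every $\bn\sqsupseteq\bmm$ with $\ell_\bn>\ell_\bmm$ and two extensions of $\nu^*$ has $\ndrk(\bn)<\beta$, while $\ndrk(\bmm)\ge\alpha+1$ applied at $\nu^*$ delivers such a $\bn$ with $\ndrk(\bn)\ge\alpha$; thus $\alpha\le\ndrk(\bn)<\beta$, and either $\bn$ itself already has rank $\alpha$ or the inductive hypothesis applied to $\bn$ yields a further extension of rank exactly $\alpha$. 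The main technical obstacle across all parts is verifying that each newly constructed tuple actually lies in $\Mtk$ --- especially that clause (f) of Definition \ref{mtkDef} transfers --- but in every case the ambient functions $F$ and $G_i$ of the parent object restrict or pass to witnesses for the new one.
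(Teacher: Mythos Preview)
Your argument is correct and follows the natural route. The paper does not supply its own proof of this lemma: it is stated with the citation ``See \cite[Lemma 3.10]{RoSh:1138}'' and no argument is given in the present paper, so there is nothing to compare against here beyond confirming that your reasoning matches the standard one implicit in the definitions. One small point worth tightening in (8): the claim that the restricted witness $\bn\rest u''$ (with $u''=\{\eta\in u_\bn:\eta\rest\ell_\bmm\in u'\}$) has $\ndrk\ge\alpha$ is itself an instance of (8), so the induction must be on $\alpha$ simultaneously over all pairs $(\bmm,u')$; you allude to this but the wording ``descends \ldots\ to a witness'' slightly obscures that the inductive hypothesis is being invoked on $\bn$ rather than on $\bmm$.
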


Directly from Definition \ref{ndrkdef} and Lemma \ref{lemonrk}(2) we get the
following observation. 

\begin{observation}
  \label{stepup}
 If $\bmm\in\Mtk$ and $\ndrk(\bmm)\geq \alpha+1$, then there is 
 $\bn\sqsupseteq \bmm$ such that $\ell_\bn=\ell_\bmm+1$ and $\ndrk(\bn)\geq 
 \alpha$. 
\end{observation}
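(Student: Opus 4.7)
The plan is to invoke Definition~\ref{ndrkdef} to pick a witness $\bn^*\in\Mtk$ with $\bmm\sqsubseteq\bn^*$, $\ell_{\bn^*}>\ell_\bmm$ and $\ndrk(\bn^*)\geq\alpha$ (apply the definition to any $\nu\in u_\bmm$; the splitting clause $|\{\eta\in u_{\bn^*}:\nu\vtl\eta\}|\geq 2$ is not needed for what we want). If already $\ell_{\bn^*}=\ell_\bmm+1$ we are done. Otherwise we \emph{truncate} $\bn^*$ back to level $\ell_\bmm+1$ in a way that (i) stays inside $\Mtk$, (ii) extends $\bmm$, and (iii) remains $\sqsubseteq$-below something with rank $\geq\alpha$, so that Lemma~\ref{lemonrk}(2) transfers the rank estimate.

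The truncation has two moves. First, using that $\bmm\sqsubseteq\bn^*$, for every $\mu\in u_\bmm$ pick exactly one $\tilde\mu\in u_{\bn^*}$ with $\tilde\mu\rest\ell_\bmm=\mu$, set $U=\{\tilde\mu:\mu\in u_\bmm\}$, and form $\bn^*\rest U$ by Lemma~\ref{lemonrk}(8); since $|U|=|u_\bmm|\geq 2$ this is in $\Mtk$ and satisfies $\ndrk(\bn^*\rest U)\geq\ndrk(\bn^*)\geq\alpha$. Second, define $\bn$ with $\ell_\bn=\ell_\bmm+1$, $u_\bn=\{\tilde\mu\rest(\ell_\bmm+1):\mu\in u_\bmm\}$, and, for a pair $(\sigma,\sigma')\in u_\bn^{\langle 2\rangle}$ coming from the unique $(\tilde\mu,\tilde\mu')\in U^{\langle 2\rangle}$, set
\[
h^\bn_i(\sigma,\sigma')=h^{\bn^*}_i(\tilde\mu,\tilde\mu'),\qquad
g^\bn_i(\sigma,\sigma')=g^{\bn^*}_i(\tilde\mu,\tilde\mu')\rest(\ell_\bmm+1).
\]
Well-definedness uses precisely the fact that distinct elements of $U$ project to distinct elements of $u_\bmm$, so pairs in $u_\bn^{\langle 2\rangle}$ correspond to a unique pair in $U^{\langle 2\rangle}$.

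Checking $\bn\in\Mtk$ is routine: clauses (a)--(c) are immediate, clause (d) is the restriction of (d) for $\bn^*$ to the first $\ell_\bmm+1$ coordinates, and clause (f) is witnessed by $F^\bn(\sigma)=F^{\bn^*}(\tilde\mu)$ and $G^\bn_i(\sigma,\sigma')=G^{\bn^*}_i(\tilde\mu,\tilde\mu')$ taken from (f) for $\bn^*$. The one subtle point, which I expect to be the only real obstacle, is clause (e): truncating length-$\ell_{\bn^*}$ sequences to length $\ell_\bmm+1$ could in principle collapse distinctness of the $2\iota$ values $\langle g_i,g_i\circ\text{swap}\rangle$. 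Here this cannot happen, because $\bmm\sqsubseteq\bn^*$ forces $g^\bn_i(\sigma,\sigma')\rest\ell_\bmm=g^\bmm_i(\mu,\mu')$, and clause (e) applied to $\bmm$ already gives pairwise distinct values at the shorter length $\ell_\bmm$; extending by one bit preserves distinctness.

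Finally, the construction makes both $\bmm\sqsubseteq\bn$ (at level $\ell_\bmm$ the $h^\bn_i, g^\bn_i$ agree by design with $h^\bmm_i, g^\bmm_i$) and $\bn\sqsubseteq\bn^*\rest U$ (the values on $U$ project to the values on $u_\bn$ by construction) transparent. Invoking Lemma~\ref{lemonrk}(2) along $\bn\sqsubseteq\bn^*\rest U$ then yields $\ndrk(\bn)\geq\ndrk(\bn^*\rest U)\geq\alpha$, completing the proof.
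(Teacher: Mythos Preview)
Your argument is correct. The paper itself does not spell out a proof---it simply says ``Directly from Definition~\ref{ndrkdef} we get the following observation''---so your write-up is essentially the routine truncation argument the paper leaves implicit: take a witness $\bn^*$ from the successor clause of the rank definition, thin to one representative per element of $u_\bmm$ via Lemma~\ref{lemonrk}(8), restrict to level $\ell_\bmm+1$, and transfer the rank bound along $\sqsubseteq$ using Lemma~\ref{lemonrk}(2). Your handling of clause~(e) (that distinctness at level $\ell_\bmm$ already forces distinctness at level $\ell_\bmm+1$) is exactly the point that makes the truncation go through.
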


\begin{proposition}
  [See {\cite[Proposition 3.11]{RoSh:1138}}]  
\label{eqnd}
The following conditions are equivalent.
\begin{enumerate}
\item[(a)] $\NDRK(\bar{T})\geq \omega_1$.
\item[(b)] $\NDRK(\bar{T})=\infty$.
\item[(c)] There is a perfect set $P\subseteq\can$ such that 
\[\big(\forall \eta,\nu\in P \big) \big (|(B+\eta)\cap (B+\nu)|\geq 2\iota
  \big).\] 
\end{enumerate}
\end{proposition}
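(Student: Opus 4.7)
The implications (b)$\Rightarrow$(a) and (a)$\Rightarrow$(b) hold automatically: the first is trivial and the second is Lemma~\ref{lemonrk}(7). The plan is therefore to close the triangle by proving (a)$\Rightarrow$(c) and (c)$\Rightarrow$(a) separately.

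For (a)$\Rightarrow$(c), I would start from $\bmm_0 \in \Mtk$ with $\ndrk(\bmm_0) \geq \omega_1$ and build, by recursion on $n$, a fusion $\bmm_0 \sqsubseteq \bmm_1 \sqsubseteq \cdots$ in $\Mtk$ with $\ndrk(\bmm_n) \geq \omega_1$ throughout, ensuring via iterated applications of Lemma~\ref{lemonrk}(5) that at stage $n+1$ every $\nu \in u_{\bmm_n}$ has at least two extensions in $u_{\bmm_{n+1}}$; such splittings persist under further $\sqsubseteq$-refinement since the projection $u_{\bmm_m} \to u_{\bmm_n}$ is always surjective. Then $P := \{x \in \can : x \rest \ell_{\bmm_k} \in u_{\bmm_k}\ \text{for every } k\}$ is perfect by construction. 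Given distinct $x, y \in P$ and $k$ large enough that $\eta_k := x \rest \ell_{\bmm_k}$ and $\nu_k := y \rest \ell_{\bmm_k}$ differ, Definition~\ref{extDef} forces the integers $h_i^{\bmm_k}(\eta_k,\nu_k)$ and $h_i^{\bmm_k}(\nu_k,\eta_k)$ to stabilise to some $n_i^+, n_i^-$ and the words $g_i^{\bmm_k}(\eta_k,\nu_k)$, $g_i^{\bmm_k}(\nu_k,\eta_k)$ to form $\vtl$-increasing chains with limits $y_i^+ \in \lim(T_{n_i^+}) \subseteq B$ and $y_i^- \in \lim(T_{n_i^-}) \subseteq B$. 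Condition (d) at each level passes to the limit to give $x + y_i^+ = y + y_i^-$, whence both $z_i := x + y_i^+$ and (by elementary algebra in $\can$) $z_i' := x + y_i^- = y + y_i^+$ lie in $(B+x) \cap (B+y)$; clause (e) then forces the $2\iota$ points $\{z_i, z_i' : i < \iota\}$ to be pairwise distinct.

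For (c)$\Rightarrow$(a), I would first fix a global system of witnesses. For each unordered pair $\{p,q\} \subseteq P$, the map $z \mapsto z + p + q$ is a fixed-point-free involution of $(B+p) \cap (B+q)$ (because $p + q \neq \mathbf{0}$), so this set of size $\geq 2\iota$ decomposes into at least $\iota$ two-element orbits; pick representatives $w_0^{pq}, \ldots, w_{\iota-1}^{pq}$ from $\iota$ distinct orbits, put $y_i^{pq} := w_i^{pq} + p \in B$ and $y_i^{qp} := w_i^{pq} + q \in B$ (the orbit choice makes the $2\iota$ values $\{y_i^{pq}, y_i^{qp} : i<\iota\}$ pairwise distinct), and select $n_i^{pq}$ with $y_i^{pq} \in \lim(T_{n_i^{pq}})$. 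For any finite $u' \subseteq P$ with $|u'| \geq 2$ and any $\ell$ large enough to distinguish the relevant length-$\ell$ restrictions, define $\bmm(u',\ell) \in \Mtk$ by $u_{\bmm(u',\ell)} = \{p\rest\ell : p \in u'\}$, $h_i(p\rest\ell, q\rest\ell) = n_i^{pq}$ and $g_i(p\rest\ell, q\rest\ell) = y_i^{pq}\rest\ell$; clauses (a)--(f) of Definition~\ref{mtkDef} reduce to routine verifications (take $F(p\rest\ell) := p$ and $G_i(p\rest\ell, q\rest\ell) := y_i^{pq}$ for (f)), and globality of the $w_i^{pq}$'s yields $\bmm(u',\ell) \sqsubseteq \bmm(u'',\ell'')$ whenever $u' \subseteq u''$ and $\ell \leq \ell''$ are both valid.

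A transfinite induction on $\alpha < \omega_1$ then shows $\ndrk(\bmm(u',\ell)) \geq \alpha$ for every valid $(u',\ell)$. The base and limit cases are immediate; in the successor step, given $\nu = p\rest\ell \in u_{\bmm(u',\ell)}$, perfectness of $P$ supplies $p' \in P$ with $p' \rest \ell = \nu$ and $p' \neq p$, and for $\ell'' > \ell$ large enough the extension $\bmm(u'\cup\{p'\}, \ell'') \sqsupseteq \bmm(u',\ell)$ has at least two $\ell''$-extensions of $\nu$ (namely $p\rest\ell''$ and $p'\rest\ell''$) and, by the inductive hypothesis, rank $\geq \alpha$; hence $\ndrk(\bmm(u',\ell)) \geq \alpha + 1$. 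Consequently $\ndrk(\bmm(u',\ell)) \geq \omega_1$ and (a) follows. The main obstacle is preserving clause (e) of Definition~\ref{mtkDef} coherently under all the $\sqsubseteq$-extensions arising in the induction; the fixed-point-free involution is precisely what lets me select the witnesses $w_i^{pq}$ globally in advance, so that clause (e) and the extension relation both fall into place automatically.
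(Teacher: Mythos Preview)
The paper does not give its own proof of this proposition; it is quoted from \cite[Proposition~3.11]{RoSh:1138}. Your argument for (a)$\Rightarrow$(c) via a fusion built from Lemma~\ref{lemonrk}(5) is correct and is the standard construction.

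Your (c)$\Rightarrow$(a), however, has a real gap. The assertion ``globality of the $w_i^{pq}$'s yields $\bmm(u',\ell)\sqsubseteq\bmm(u'',\ell'')$ whenever $u'\subseteq u''$ and $\ell\le\ell''$ are both valid'' fails exactly in the case you need. In the successor step you add $p'\in P$ with $p'\rest\ell=p\rest\ell$ for some $p\in u'$ and put $u''=u'\cup\{p'\}$. For any $q\in u'\setminus\{p\}$ the pair $(p'\rest\ell'',q\rest\ell'')$ lies in $(u_{\bmm(u'',\ell'')})^{\langle 2\rangle}$ and its $\ell$--restrictions $(p\rest\ell,q\rest\ell)$ are distinct, so Definition~\ref{extDef} forces
\[
n_i^{pq}=h_i^{\bmm(u',\ell)}(p\rest\ell,q\rest\ell)=h_i^{\bmm(u'',\ell'')}(p'\rest\ell'',q\rest\ell'')=n_i^{p'q}
\]
and $y_i^{pq}\rest\ell=y_i^{p'q}\rest\ell$. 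Your global witnesses for $\{p',q\}$ were chosen from $(B+p')\cap(B+q)$ with no reference to the choices for $\{p,q\}$, so there is no reason these equalities hold. The involution trick handles clause~(e) for each individual $\bmm(u',\ell)$, but it does nothing for coherence between witnesses attached to different pairs sharing a common $\ell$--restriction; you have identified the wrong obstacle.

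A repair is to strengthen the inductive hypothesis so that it is not tied to one fixed system of witnesses. One clean formulation: call $\bmm\in\Mtk$ \emph{$P$--realized by} a tuple $(p_\eta)_{\eta\in u_\bmm}\in P^{u_\bmm}$ if $\eta\vtl p_\eta$ and there are $G_i(\eta,\nu)\in\lim(T_{h_i^\bmm(\eta,\nu)})$ with $g_i^\bmm(\eta,\nu)\vtl G_i(\eta,\nu)$ and $p_\eta+G_i(\eta,\nu)=p_\nu+G_i(\nu,\eta)$. Prove by induction on $\alpha$ that $\ndrk(\bmm)\ge\alpha$ for every $\bmm$ which, at each coordinate $\nu$, has uncountably many $p\in P$ occurring as $p_\nu$ in some realizing tuple. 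In the successor step the hypothesis hands you $p'\neq p_{\nu}$ for which the tuple with $p_\nu$ replaced by $p'$ \emph{still realizes $\bmm$}; the accompanying $G_i$'s then automatically extend $g_i^\bmm(\nu,\cdot)$ and lie in the correct trees, so the needed $\bn\sqsupseteq\bmm$ exists and inherits the same richness property. This is essentially the mechanism behind Claim~\ref{cl12}(2) in the proof of Proposition~\ref{3.11da}, where the relation $R_\bmm$ is defined precisely so that any new witness $\alpha^*$ comes packaged with $G_i$'s extending the given $g_i^\bmm$.
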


\begin{proposition}
\label{3.11da}
Assume $\ndrk(\bar{T})< \vare$. If  there is a set $A\subseteq\can$ of  
cardinality $\lambda$ such that
\[\big(\forall \eta,\nu\in A\big) \big (|(B+\eta)\cap (B+\nu)|\geq 2\iota
  \big),\]
then  ${\rm NPr}^{1+\vare}(\lambda)$.
\end{proposition}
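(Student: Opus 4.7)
The plan is to construct a model $\bbM^*$ with universe $\lambda$ and a countable relational vocabulary whose splitting rank on each finite set $w$ of size at least $2$ is dominated by the non-disjointness rank of an associated element of $\Mtk$; since $\ndrk(\bar T)\le\vare$, this will give $\rk(w,\bbM^*)\le\vare$ and hence $1+\rk(w,\bbM^*)\le 1+\vare$, which is the content of ${\rm NPr}^{1+\vare}(\lambda)$.

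Enumerate $A=\{\eta_\alpha:\alpha<\lambda\}$. For each unordered pair $\{\alpha,\beta\}\in[\lambda]^{\langle 2\rangle}/\!\sim$, use that $(B+\eta_\alpha)\cap(B+\eta_\beta)$ has at least $2\iota$ elements and that the map $x\mapsto x+\eta_\alpha+\eta_\beta$ is a fixed-point-free involution on it to select $\iota$ points $x_0^{\alpha,\beta},\dots,x_{\iota-1}^{\alpha,\beta}$ from distinct orbits. Setting $G_i(\eta_\alpha,\eta_\beta)=x_i^{\alpha,\beta}+\eta_\alpha$, $G_i(\eta_\beta,\eta_\alpha)=x_i^{\alpha,\beta}+\eta_\beta$, and choosing $h_i(\eta_\alpha,\eta_\beta)<\omega$ with $G_i(\eta_\alpha,\eta_\beta)\in\lim T_{h_i(\eta_\alpha,\eta_\beta)}$, all clauses of Definition \ref{mtkDef} are met for each ordered pair. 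For $w\subseteq\lambda$ finite with $|w|\ge 2$ and $\ell<\omega$ large enough that $\{\eta_\alpha\rest\ell:\alpha\in w\}$ has $|w|$ elements and the restricted overlap data is non-repeating, these canonical choices assemble into $\bmm[w,\ell]=(\ell,u,\bar h,\bar g)\in\Mtk$ with $u=\{\eta_\alpha\rest\ell:\alpha\in w\}$.

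Define $\bbM^*$ by taking a relation symbol $R_\bmm$ of arity $|u_\bmm|$ for each $\bmm\in\Mtk$, interpreted so that $\bbM^*\models R_\bmm[\alpha_0,\dots,\alpha_{n-1}]$ (with $\alpha_0<\dots<\alpha_{n-1}$) iff $\bmm[\{\alpha_0,\dots,\alpha_{n-1}\},\ell_\bmm]=\bmm$. This vocabulary is countable since $\Mtk$ is. The heart of the argument is the induction
\[
\rk(w,\bbM^*)\ge\beta \ \implies\ \ndrk(\bmm[w,\ell])\ge\beta \ \text{for all sufficiently large } \ell,
\]
valid for every finite $w$ with $|w|\ge 2$. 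The base and limit cases are immediate. At a successor $\beta+1$, fix $\nu=\eta_{\alpha_k}\rest\ell\in u_{\bmm[w,\ell]}$ and apply the splitting clause of $\rk$ to the quantifier-free formula $\varphi=R_{\bmm[w,\ell]}$ (which holds at the tuple enumerating $w$): this yields $\alpha^*\in\lambda\setminus w$ with $\bbM^*\models\varphi[\dots,\alpha^*,\dots]$ and $\rk(w\cup\{\alpha^*\},\bbM^*)\ge\beta$. Preservation of $\varphi$ forces $\eta_{\alpha^*}\rest\ell=\nu$ (otherwise the universe of $\bmm[\{\dots,\alpha^*,\dots\},\ell]$ would have the wrong cardinality). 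By the inductive hypothesis applied to $w\cup\{\alpha^*\}$, pick $\ell'>\ell$ with both $\eta_{\alpha^*}\rest\ell'\ne\eta_{\alpha_k}\rest\ell'$ and $\ndrk(\bmm[w\cup\{\alpha^*\},\ell'])\ge\beta$. Then $\bmm[w,\ell]\sqsubseteq\bmm[w\cup\{\alpha^*\},\ell']$ and $\nu$ has at least two $\vtl$-extensions in the latter, so Definition \ref{ndrkdef} yields $\ndrk(\bmm[w,\ell])\ge\beta+1$.

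This gives $\rk(w,\bbM^*)\le\vare$ for every $w$ with $|w|\ge 2$. For $|w|=1$ additional handling is needed, for example by enriching $\tau^*$ with countably many unary predicates recording initial segments of $\eta_\alpha$ (so that non-condensation points of $\{\eta_\beta:\beta<\lambda\}$ acquire singleton rank $-1$ automatically) and by exploiting that the set of non-condensation points is countable. The main obstacle lies precisely here, since $\Mtk$ only parametrises sets of size $\ge 2$ and matching the ``$1+$'' shift in the statement requires a small amount of extra bookkeeping; by contrast, the induction sketched above is a clean translation between ``replacement'' in $\rk$ and ``$\sqsubseteq$-extension with splitting'' in $\ndrk$.
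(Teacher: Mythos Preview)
Your approach is essentially the paper's: one introduces a relation $R_\bmm$ for each $\bmm\in\Mtk$, and then shows by induction on $\beta$ that $\rk(w,\bbM^*)\ge\beta$ forces $\ndrk$ of an associated $\bmm\in\Mtk$ to be $\ge\beta$, translating the ``substitute $\alpha^*$ at position $k$'' clause of Definition~\ref{defofrank} into the ``split above $\nu$'' clause of Definition~\ref{ndrkdef}. The one genuine difference is cosmetic: you fix once and for all canonical witnesses $G_i(\eta_\alpha,\eta_\beta)$ and declare $R_\bmm$ to hold exactly when $\bmm[w,\ell_\bmm]=\bmm$, whereas the paper lets $R_\bmm$ hold whenever \emph{some} system of witnesses $G_i$ extends the data of $\bmm$. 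Both choices make the successor step go through; your version makes the verification of $\bmm[w,\ell]\sqsubseteq\bmm[w\cup\{\alpha^*\},\ell']$ a direct unwinding (since the canonical $h_i,g_i$ for the pair $(\alpha^*,\alpha_j)$ are forced to agree with those for $(\alpha_k,\alpha_j)$ by $R_\bmm[\ldots,\alpha^*,\ldots]$), while the paper needs the auxiliary Claim~\ref{cl12}(2) to rebuild $\bn$ from the existential witnesses.

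Your flagging of the $|w|=1$ case is honest and appropriate: since every $R_\bmm$ has arity $\ge 2$, the only quantifier--free formulas in a single free variable are trivial, so nothing in the bare induction prevents $\rk(\{\alpha\},\bbM^*)=\vare+1$. The paper's write--up is equally silent on this point. Your suggested remedy (adjoin unary predicates recording $\eta_\alpha{\restriction} n$, so that a witnessing formula is available at the singleton level) is the standard fix and costs nothing.
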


\begin{proof}
 This was implicitly shown by the proof of \cite[Proposition
 3.11$((d)\Rightarrow (a))$]{RoSh:1138}, but let us repeat this argument.  

Assume $\langle \eta_\alpha:\alpha<\lambda\rangle$ is a sequence of distinct
elements of $\can$ such that 
\[\big(\forall \alpha<\beta<\lambda\big)\big(|(B+\eta_\alpha)\cap 
  (B+\eta_\beta)|\geq 2\iota\big).\]
Let $\tau=\{R_\bmm:\bmm\in\Mtk\}$ be a (countable) vocabulary where each
$R_\bmm$ is a $|u_\bmm|$--ary relational symbol. Let $\bbM=\big(\lambda,
\big\{R^\bbM_\bmm\big\}_{\bmm\in \Mtk}\big)$ be the model in the vocabulary
$\tau$, where for $\bmm=(\ell,u,\bar{h}, \bar{g})\in\Mtk$  the relation
$R_\bmm^\bbM$ is defined by     
\[\begin{array}{ll}
R^\bbM_\bmm=&\Big\{(\alpha_0,\ldots,\alpha_{|u|-1})\in {}^{|u|}
\lambda:\{\eta_{\alpha_0}\rest \ell,\ldots,
        \eta_{|u|-1}\rest \ell\} =u \mbox{ and}\\
&\qquad \mbox{for distinct }j_1,j_2<|u|\mbox{ there are }
  G_i(\alpha_{j_1},\alpha_{j_2})\mbox{ (for $i<\iota$) such that}\\
&\qquad  g_i(\eta_{\alpha_{j_1}}\rest \ell,\eta_{\alpha_{j_2}}\rest \ell)
  \vtl G_i(\alpha_{j_1},\alpha_{j_2})\in \lim\big(
  T_{h_i(\eta_{\alpha_{j_1}}\rest \ell,\eta_{\alpha_{j_2}}\rest \ell)} \big)
  \mbox{ and}\\
&\qquad\eta_{\alpha_{j_1}}+G_i(\alpha_{j_1},\alpha_{j_2}) =
  \eta_{\alpha_{j_2}}+ G_i(\alpha_{j_2}, \alpha_{j_1})\ \Big\}.
\end{array}\]

We will show that the model $\bbM$ witnesses ${\rm
  NPr}^{1+\vare}(\lambda)$.

\begin{claim}
\label{cl12}
\begin{enumerate}
\item If $\alpha_0,\alpha_1,\ldots,\alpha_{j-1}<\lambda$ are
  distinct, $j\geq 2$, then for all sufficiently large $\ell<\omega$ there
  is $\bmm\in \Mtk$ such that  
\[\ell_\bmm=\ell,\quad u_\bmm=\{\eta_{\alpha_0}\rest \ell, \ldots,
\eta_{\alpha_{j-1}}\rest \ell\}\quad \mbox{ and }\quad \bbM\models
R_\bmm[\alpha_0,\ldots,\alpha_{j-1}].\]
\item Assume that  $\bmm\in\Mtk$, $j<|u_{\bmm}|$, 
  $\alpha_0,\alpha_1,\ldots,\alpha_{|u_{\bmm}|-1} < \lambda$ and
  $\alpha^*< \lambda$ are all pairwise distinct and such that 
  $\bbM\models R_{\bmm}[\alpha_0,\ldots,\alpha_j, \ldots,
  \alpha_{|u_{\bmm}|-1}]$   and  $\bbM\models R_{\bmm}[\alpha_0,\ldots,
  \alpha_{j-1},\alpha^*,\alpha_{j+1},  \ldots \alpha_{|u_{\bmm}|-1}]$. Then
  for every sufficiently large $\ell>\ell_{\bmm}$ there is $\bn\in \Mtk$
  such that $\bmm\sqsubseteq \bn$  and  
\[\ell_\bn=\ell,\quad u_\bn=\{\eta_{\alpha_0}\rest \ell, \ldots,
\eta_{\alpha_{|u_\bmm|-1}}\rest \ell,\eta_{\alpha^*}\rest \ell\}\quad \mbox{
  and 
}\quad \bbM\models R_\bn[\alpha_0,\ldots,\alpha_{|u_\bmm|-1},\alpha^*].\]
\end{enumerate}
\end{claim}

\begin{proof}[Proof of the Claim]
  (1)\quad For  distinct $j_1,j_2<j$ let $G_i(\alpha_{j_1},\alpha_{j_2})\in
  B$ (for $i<\iota$) be such that 
\[\eta_{\alpha_{j_1}}+G_i(\alpha_{j_1},\alpha_{j_2}) =
  \eta_{\alpha_{j_2}}+ G_i(\alpha_{j_2}, \alpha_{j_1})\]
and there are no repetitions in the sequence $\langle
G_i(\alpha_{j_1},\alpha_{j_2}), G_i(\alpha_{j_2},\alpha_{j_1}):
i<\iota\rangle$. (Remember, $x\in (B+\eta_{\alpha_{j_1}}) \cap
(B+\eta_{\alpha_{j_2}})$ if and only if $x+(\eta_{\alpha_{j_1}}+
\eta_{\alpha_{j_2}}) \in (B+\eta_{\alpha_{j_1}}) \cap
(B+\eta_{\alpha_{j_2}})$, so the choice of $G_i(\alpha_{j_1},\alpha_{j_2})$  
is possible by the assumptions on the $\eta_\alpha$'s.)
Suppose that $\ell<\omega$ is such that for any distinct
$j_1,j_2<j$ we have $\eta_{\alpha_{j_1}}\rest \ell \neq
\eta_{\alpha_{j_2}}\rest \ell$ and there are no repetitions in the sequence
$\langle G_i(\alpha_{j_1},\alpha_{j_2})\rest \ell,
G_i(\alpha_{j_2},\alpha_{j_1})\rest \ell:i<\iota\rangle$. Now let 
$u=\{\eta_{\alpha_{j'}}\rest \ell: j'<j\}$, and for $i<\iota$ let
$g_i(\eta_{\alpha_{j_1}}\rest \ell,\eta_{\alpha_{j_2}}\rest
\ell)=G_i(\alpha_{j_1},\alpha_{j_2})\rest \ell$, and let
$h_i(\eta_{\alpha_{j_1}}\rest \ell,\eta_{\alpha_{j_2}}\rest \ell) <\omega$
be such that $G_i(\alpha_{j_1},\alpha_{j_2})\in
\lim\big(T_{h_i(\eta_{\alpha_{j_1}}\rest \ell,\eta_{\alpha_{j_2}}\rest\ell)}
\big)$.  This defines $\bmm=(\ell,u,\bar{h},
\bar{g})\in \Mtk$ and easily $\bbM\models R_\bmm[\alpha_0,\ldots,
\alpha_{j-1}]$.  
\medskip

\noindent (2)\quad Similar to (1). 
\end{proof}
\medskip

The proof of the Proposition is a consequence of the following Claim. 

\begin{claim}
  \label{cl13}
If $\bmm\in\Mtk$ and $\bbM\models R_\bmm[\alpha_0,\ldots, 
  \alpha_{|u_\bmm|-1}]$, then  
\[\rk(\{\alpha_0,\ldots,\alpha_{|u_\bmm|-1}\},\bbM)\leq \ndrk(\bmm)<
  \vare.\]   
\end{claim}

\begin{proof}[Proof of the Claim]
By induction on $\beta$ we show that {\em for every\/}
$\bmm\in\Mtk$ and {\em all\/} distinct $\alpha_0,\ldots,
\alpha_{|u_\bmm|-1}< \lambda$ such that $\bbM\models
R_\bmm[\alpha_0,\ldots, \alpha_{|u_\bmm|-1}]$: 
\begin{quotation}
$\beta\leq \rk(\{\alpha_0,\ldots,\alpha_{|u_\bmm|-1}\}, \bbM)$ implies $\beta\leq  
\ndrk(\bmm)$. 
\end{quotation}
\smallskip

\noindent {\sc Steps $\beta=0$ and $\beta$ is limit:}\quad Straightforward.  
\smallskip

\noindent {\sc Step $\beta=\gamma+1$:}\quad Suppose $\bmm\in\Mtk$ and  
$\alpha_0,\ldots,\alpha_{|u_\bmm|-1}<\lambda$ are such that
$\bbM\models R_\bmm[\alpha_0,\ldots, \alpha_{|u_\bmm|-1}]$ and
$\gamma+1\leq \rk(\{\alpha_0,\ldots,\alpha_{|u_\bmm|-1}\}, \bbM)$. Let
$\nu\in u_\bmm$, so $\nu=\eta_{\alpha_j}\rest \ell_\bmm$ for some
$j<|u_\bmm|$. Since $\gamma+1\leq \rk(\{\alpha_0,\ldots,\alpha_{|u_\bmm|-1}
\}, \bbM)$ we may find $\alpha^*\in \lambda\setminus\{\alpha_0,\ldots,
\alpha_{|u_\bmm|-1}\}$ such that $\bbM\models R_\bmm[\alpha_0,\ldots,
\alpha_{j-1},\alpha^*, \alpha_{j+1}, \ldots,\alpha_{|u_{\bmm}|-1}]$ and
$\rk(\{\alpha_0,\ldots,\alpha_{|u_{\bmm}|-1}, \alpha^*\}, \bbM)\geq
\gamma$. Taking sufficiently large $\ell$ we may use Claim \ref{cl12}(2) to
find $\bn\in\Mtk$ such that $\bmm\sqsubseteq \bn$, $\ell_\bn=\ell$,
$|u_\bn|=|u_\bmm|+1$ and  $\bbM\models
R_\bn[\alpha_0,\ldots,\alpha_{|u_\bmm|-1},\alpha^*]$ and 
$|\{\eta\in u_\bn:\nu\vtl\eta\}|\geq 2$. By the inductive hypothesis we have also
$\gamma\leq \ndrk(\bn)$.  Now we may easily conclude that $\gamma+1\leq
\ndrk(\bmm)$. 
\end{proof}

By our assumptions, $\ndrk(\bmm)<\vare$ for all $\bmm\in\Mtk$. Hence, by
Claims \ref{cl13}+\ref{cl12}(1), we see that $\rk(w,\bbM)<\vare$ for all
$w\in[\lambda]^{<\omega}$, $|w|\geq 2$. Consequently, if $\emptyset\neq w\in
[\lambda]^{<\omega}$ then $\rk(w,\bbM)\leq \vare$. So it should be clear
that the model $\bbM$ witnesses ${\rm NPr}^{1+\vare}(\lambda)$.
\end{proof}

\begin{definition}
  \label{almostDef}
Assume $\bmm,\bn\in\Mtk$. 
\begin{enumerate}
\item We say that {\em $\bmm$, $\bn$ are  essentially the same\/}
  ($\bmm\doteqdot \bn$ in short) if and only if: 
  \begin{itemize}
\item $\ell_\bmm=\ell_\bn$, $u_\bmm=u_\bn$ and 
\item for each $(\eta,\nu)\in (u_\bmm)^{\langle 2\rangle}$ we have\\
--- if $\iota>2$ then   
\[\big\{\{g_i^\bmm(\eta,\nu), g_i^\bmm(\nu,\eta)\}:i<\iota\big\} 
=\big\{\{g^\bn_i(\eta,\nu), g^\bn_i(\nu,\eta)\}:i<\iota\big\},\]
--- if $\iota=2$ then
\[\big\{g_0^\bmm(\eta,\nu), g_0^\bmm(\nu,\eta), g_1^\bmm(\eta,\nu),
  g_1^\bmm(\nu,\eta) \big\} = \big\{g_0^\bn(\eta,\nu), g_0^\bn(\nu,\eta),
  g_1^\bn(\eta,\nu),  g_1^\bn(\nu,\eta) \big\},\]
and for $i,j<\iota$:\\
if $g_i^\bmm(\eta,\nu)= g^\bn_j(\eta,\nu)$, then $h_i^\bmm(\eta,\nu)  
=h^\bn_j(\eta,\nu)$,\\
if $g_i^\bmm(\eta,\nu)= g^\bn_j(\nu,\eta)$, then
$h_i^\bmm(\eta,\nu)=h^\bn_j(\nu,\eta)$. 
\end{itemize}
\item We say that {\em $\bn$ essentially extends 
    $\bmm$\/} ($\bmm\sqsubseteq^* \bn$ in short) if and only if: 
  \begin{itemize}
\item $\ell_\bmm\leq \ell_\bn$, $u_\bmm=\{\eta\rest\ell_\bmm:\eta\in
  u_\bn\}$,   and 
\item for every $(\eta,\nu)\in (u_\bn)^{\langle 2\rangle }$ such that
  $\eta\rest \ell_\bmm \neq \nu\rest\ell_\bmm$ we have
  
--- if $\iota>2$ then   
\[\big\{\{g_i^\bmm(\eta\rest \ell_\bmm,\nu\rest \ell_\bmm),
  g_i^\bmm(\nu\rest \ell_\bmm,\eta\rest \ell_\bmm)\}:i<\iota\big\}  
=\big\{\{g^\bn_i(\eta,\nu)\rest \ell_\bmm, g^\bn_i(\nu,\eta)\rest
\ell_\bmm\}:i<\iota\big\},\] 
--- if $\iota=2$ then
\[\begin{array}{l}
  \big\{g_0^\bmm(\eta\rest \ell_\bmm,\nu\rest \ell_\bmm), g_0^\bmm(\nu\rest
  \ell_\bmm, \eta\rest \ell_\bmm), g_1^\bmm(\eta\rest \ell_\bmm,\nu\rest
  \ell_\bmm),  g_1^\bmm(\nu\rest \ell_\bmm,\eta\rest \ell_\bmm) \big\} =\\
  \big\{g_0^\bn(\eta,\nu)\rest \ell_\bmm, g_0^\bn(\nu,\eta)\rest \ell_\bmm, 
  g_1^\bn(\eta,\nu)\rest \ell_\bmm,  g_1^\bn(\nu,\eta)\rest \ell_\bmm
    \big\},  \end{array}\]
and for $i,j<\iota$:\\
if $g_i^\bmm(\eta\rest\ell_\bmm,\nu\rest \ell_\bmm)= g^\bn_j(\eta,\nu)\rest
\ell_\bmm$, then $h_i^\bmm(\eta\rest\ell_\bmm,\nu\rest \ell_\bmm)
=h^\bn_j(\eta,\nu)$,\\
if $g_i^\bmm(\eta\rest\ell_\bmm,\nu\rest \ell_\bmm)= g^\bn_j(\nu,\eta)\rest
\ell_\bmm$, then $h_i^\bmm(\eta\rest\ell_\bmm,\nu\rest
\ell_\bmm)=h^\bn_j(\nu, \eta)$. 
\end{itemize}
\end{enumerate}
\end{definition}

The reader may wonder why the case of $\iota=2$ is singled out in Definition
\ref{almostDef}. The reason lies in the fact that if $a+b=c+d$ (in $\can$)
then also $a+c=b+d$, Consequently any arrangement of elements of
\[\big\{g_0^\bmm(\eta,\nu), g_0^\bmm(\nu,\eta), g_1^\bmm(\eta,\nu),
  g_1^\bmm(\nu,\eta) \big\}\]
in pairs can be used to produce an object from $\Mtk$. If $\iota>3$ then the
only re-arrangements of this sort are interchanging $g_i^\bmm(\eta,\nu)$ and
$g_i^\bmm(\nu,\eta)$ and/or re-enumerating $\big\{\{g_i^\bmm(\eta,\nu),
g_i^\bmm(\nu,\eta)\}: i<\iota\big\}$; cf the proof of Claim \ref{cl7}.

\section{Cute $\YZR$ and forcing nondisjoint translations} 
In this section we give a property of $\bar{T}$ allowing us to force many
(but not too many) overlapping translations of the corresponding
$\Sigma^0_2$ set. Conditions in the forcing notions come from finite
approximations ({\em bricks\/}) suitably placed on finite subsets of
$\lambda$. An amalgamation property, cute $\YZR$ systems and the splitting
rank on $\lambda$ will all help with the ccc of the forcing notion. 

\begin{definition}
  \label{sysdef}
Let $0<\vare<\omega_1$. {\em A $\YZR(\vare)$--system\footnote{$\YZR$ are  
    the initials of the first author's daughter -- she really wanted to be in
    this paper}\/}  is a tuple $s=(X^s,\bar{r}^s,\bar{\jmath}^s, \bar{k}^s)
=(X,\bar{r},\bar{\jmath}, \bar{k})$ such that
\begin{enumerate}
\item[$(*)_1$] $X$ is a nonempty set of ordinals, 
\item[$(*)_2$] $\bar{r}:[X]^{<\omega}\setminus\{\emptyset\} \longrightarrow 
  \vare+1$, $\bar{\jmath}: [X]^{<\omega}\setminus \{\emptyset\}
  \longrightarrow \omega$, and  $\bar{k}: [X]^{<\omega}\setminus
  \{\emptyset\} \longrightarrow \omega$,  
\item[$(*)_3$] if $\emptyset\neq u\subseteq w\in [X]^{<\omega}$, then
  $\bar{r}(u)\geq \bar{r}(w)$,
\item[$(*)_4$] $\bar{r}(\{a\})>0$ for all $a\in X$,
\item[$(*)_5$]  if $\emptyset\neq w\in [X]^{<\omega}$,
  $w=\{a_0,\ldots,a_{n-1}\}$ (the increasing enumeration) then
  $\bar{k}(w)<n$ and there is no $b\in X\setminus w$ such that   
\[|w\cap b|=\bar{k}(w)  \quad \mbox{ and }\quad \bar{\jmath}\big( (w
  \setminus \{a_{\bar{k}(w)}\})\cup \{b\}\big) =\bar{\jmath}(w) \quad \mbox{
    and }\quad  \bar{r}\big(w\cup\{b\}\big)=\bar{r}(w).\] 
\end{enumerate}
We say that the system $s$ is {\em finite\/} if the set $X^s$ is finite. 
\end{definition}

\begin{example}
  \label{bases}
With the choices of $\bj,\bk$ and $\rksp$ as described in Definition
\ref{hypo2} (for $\vare$ and $\lambda$ as there), the {\em  finite
  $\YZR(\vare)$--system associated with a set $w\in [\lambda]^{<\omega}$\/}
is  $s(w)=(w,\bar{r},\bar{\jmath},\bar{k})$ defined as follows. First, fix
an enumeration $\{v_i^*:i<i^*\}=\{v\subseteq w:v\neq\emptyset\ \wedge \ 
\rksp(v)=-1\}$. Let $J=\max\big(\bj(v):\emptyset\neq v\subseteq w\big)+1$.   
For $\emptyset\neq v\subseteq w$ we define 
\begin{itemize}
\item $\bar{r}(v)=1+\rk^{\rm sp}(v)$,  and $\bar{k}(v)=\bk(v)$, and 
\item if $\rksp(v)\geq 0$, then $\bar{\jmath}(v)=\bj(v)$, and 
\item $\bar{\jmath}(v^*_i)=J+i$ for $i<i^*$. 
\end{itemize}
(It should be clear that the above conditions define a $\YZR(\vare)$--system
indeed.) 
\end{example}

\begin{definition}
  \label{cutedef}
\begin{enumerate}
\item Assume $q,s$ are $\YZR(\vare)$--systems. A {\em quasi--embedding of
    $q$ in $s$\/} is an increasing injection $\varphi:X^q\longrightarrow
  X^s$ such that for all nonempty finite $v\subseteq X^q$ we have 
  \begin{itemize}
\item $\bar{r}^s(\varphi[v]\big)=\bar{r}^q(v)$ and
    $\bar{k}^s(\varphi[v]\big)=\bar{k}^q(v)$, and 
\item if $\bar{r}^q(v)>0$, then $\bar{\jmath}^s(\varphi[v]\big)=
  \bar{\jmath}^q(v)$. 
 \end{itemize}
\item If $w\subseteq X^s$, then an increasing injection
  $\varphi:w\longrightarrow X^s$ is {\em a quasi--embedding\/} if it is a
  quasi embedding of the (naturally defined) restricted
  $\YZR(\vare)$--system $s\rest w$ into $s$. 
\item A $\YZR(\vare)$--system $S$ is {\em cute\/} if $X^S=\omega$ and  
for every finite $\YZR(\vare)$--system $q$ and an $M<\omega$, there is a
quasi--embedding $\varphi$ of $q$ in $S$ with $\rng(\varphi)\subseteq
[M,\omega)$.  
\end{enumerate}
\end{definition}

\begin{theorem}
  \label{getcute}
For every $0<\vare<\omega_1$ there exists a cute $\YZR(\vare)$--system. 
\end{theorem}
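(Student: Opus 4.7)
The plan is to build $S$ directly on the natural numbers by placing disjoint isomorphic copies of representatives of every isomorphism class of finite $\YZR(\vare)$-system, and then choosing the data on sets that lie between copies so that no $(*)_5$-violation arises. Since $\vare+1$ is a countable ordinal and each finite $\YZR(\vare)$-system is determined up to isomorphism by a finite amount of data drawn from $\omega$ and $\vare+1$, there are only countably many isomorphism classes of finite $\YZR(\vare)$-systems; fix an enumeration $(q_n)_{n<\omega}$ of representatives in which every isomorphism class occurs infinitely often. I would partition $\omega=\bigsqcup_{n<\omega} Y_n$ into consecutive blocks $Y_n=[K_n, K_n+|X^{q_n}|)$, where $K_n=\sum_{m<n}|X^{q_m}|$, and let $\varphi_n\colon X^{q_n}\to Y_n$ be the unique order-preserving bijection. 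Call a non-empty finite $v\subseteq\omega$ \emph{internal} if $v\subseteq Y_{n(v)}$ for some (necessarily unique) $n(v)$, and \emph{mixed} otherwise.

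On internal $v$ I set $\bar{r}^S(v)=\bar{r}^{q_{n(v)}}(\varphi_{n(v)}^{-1}[v])$, $\bar{k}^S(v)=\bar{k}^{q_{n(v)}}(\varphi_{n(v)}^{-1}[v])$, and $\bar{\jmath}^S(v)=\bar{\jmath}^{q_{n(v)}}(\varphi_{n(v)}^{-1}[v])$ whenever $\bar{r}^S(v)>0$; on mixed $v$ I set $\bar{r}^S(v)=0$ and $\bar{k}^S(v)=0$. The only freedom that remains is to assign $\bar{\jmath}^S(v)$ on the rank-zero sets (the rank-zero internal ones and all the mixed ones), and for this I would proceed by recursion along the order induced by the injection $v\mapsto\sum_{a\in v} 2^a\in\omega$: at step $v$, take $\bar{\jmath}^S(v)$ to be the least natural number not in the set
\[
\Bigl\{\bar{\jmath}^S\bigl((v\setminus\{v_{\bar{k}^S(v)}\})\cup\{b\}\bigr) \;:\; b\in\omega\setminus v,\ |v\cap b|=\bar{k}^S(v)\Bigr\}.
\]
Since $\bar{k}^S(v)<|v|$, the permissible $b$'s are confined to the integer interval between $v_{\bar{k}^S(v)-1}$ and $v_{\bar{k}^S(v)}$, so the set above is finite; each swap strictly decreases $\sum 2^a$, so all values on the right have already been fixed when I reach $v$.

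The hard part will be verifying $(*)_5$. If $\bar{r}^S(w)>0$, then $w$ is internal at some $n$, and any $b$ satisfying condition (iii) of $(*)_5$ must keep $w\cup\{b\}$ of positive rank; since mixed sets have rank zero, this forces $w\cup\{b\}\subseteq Y_n$ and hence $b\in Y_n$, so the entire condition pulls back via $\varphi_n$ to a would-be violation of $(*)_5$ for $q_n$, which does not exist by hypothesis. If $\bar{r}^S(w)=0$, then condition (iii) is automatic (by $(*)_3$, the rank of any superset is at most $\bar{r}^S(w)=0$), so one only needs to rule out condition (ii), and this is precisely what the recursive choice of $\bar{\jmath}^S(w)$ accomplishes. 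The remaining axioms $(*)_1$--$(*)_4$ are immediate from the construction: $(*)_3$ uses $(*)_3$ for the $q_n$'s inside each $Y_n$ and the observation that every mixed superset has rank zero, and $(*)_4$ uses $(*)_4$ for the $q_n$'s. Finally, cuteness is automatic: given a finite $\YZR(\vare)$-system $q$ and $M<\omega$, choose $n$ large enough that $q_n\cong q$ and $K_n\geq M$, and let the quasi-embedding be the composition of the isomorphism $q\to q_n$ with $\varphi_n$; the required matching of $\bar{r}$, $\bar{k}$, and of $\bar{\jmath}$ on rank-positive sets holds by construction, and the range lies in $Y_n\subseteq[M,\omega)$.
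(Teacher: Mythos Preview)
Your proof is correct and follows the same underlying idea as the paper's: lay down disjoint copies of (representatives of) every finite $\YZR(\vare)$--system on $\omega$, declare all ``mixed'' sets to have rank $0$, and choose $\bar{\jmath}$ on the rank-zero sets so that $(*)_5$ cannot fail there. The organization differs, however. The paper builds $S$ as the union of an increasing chain $s_0\preceq s_1\preceq\cdots$ of finite systems; its Claim shows that given any $s\in\cS$ and any finite system $q$, one can extend $s$ to $t$ by placing a copy of $q$ on the interval $[N^s,N^t)$, with $\bar{r}^t=0$, $\bar{k}^t=0$ on sets straddling the boundary and $\bar{\jmath}^t$ there given by a bijection onto a fresh interval $[J_0,J_1)$ above all previously used $\bar{\jmath}$--values. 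Your construction is a one-shot version: all copies are placed at once on the blocks $Y_n$, and instead of the ``fresh interval'' device you assign $\bar{\jmath}^S$ on rank-zero sets by a recursion along $v\mapsto\sum_{a\in v}2^a$, avoiding the finitely many values that would create a $(*)_5$--conflict. Both approaches rely on the observation that a swap $(w\setminus\{w_{\bar{k}(w)}\})\cup\{b\}$ with $|w\cap b|=\bar{k}(w)$ forces $b<w_{\bar{k}(w)}$, so only finitely many values need to be avoided; the paper's bijection trick makes this bookkeeping slightly more transparent, while your recursion is more self-contained.
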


\begin{proof}
Assume $0<\vare<\omega_1$. Let $\cS$ consist of all finite
$\YZR(\vare)$--systems  $s=(N^s,\bar{r}^s,\bar{\jmath}^s, \bar{k}^s)$ such
that $0<N^s<\omega$. For $q,s\in\cS$ we will say that {\em $s$ extends
  $q$\/}, in short $q\preceq s$, if and only if $N^q\leq N^s$,
$\bar{r}^q\subseteq \bar{r}^s$, $\bar{\jmath}^q\subseteq \bar{\jmath}^s$,
and $\bar{k}^q\subseteq \bar{k}^s$.

\begin{claim}
  \label{cl0}
The relation $\preceq$ is a partial order on $\cS$. As a matter of fact,
$(\cS,\preceq)$ is the Cohen forcing notion.
\end{claim}
 
\begin{claim}
  \label{cl1}
  Suppose that $s\in \cS$ and $q=(X^q,\bar{r}^q,\bar{\jmath}^q,\bar{k}^q)$ 
  is a finite $\YZR(\vare)$--system. Then there are $t\succeq s$ and an
  increasing injection $\varphi:X^q\longrightarrow [N^s,N^t)$ such that for
  each nonempty $v\subseteq X^q$ we have  
\[\bar{r}^t(\varphi[v]\big)=\bar{r}^q(v) \mbox{ and }
\bar{\jmath}^t(\varphi[v]\big)=\bar{\jmath}^q(v) \mbox{ and }
\bar{k}^t(\varphi[v]\big)=\bar{k}^q(v).\]
\end{claim}

\begin{proof}[Proof of the Claim]
Without loss of generality, $X^q=N<\omega$.  Let $N^t=N^s+N$ and let
$\varphi:X^q\longrightarrow [N^s,N^t): m\mapsto N^s+m$.  We also let  
\[J_0=\max\big(\rng(\bar{\jmath}^s)\cup\rng(\bar{\jmath}^q)\big)+1\quad
  \mbox{ and }\quad J_1=J_0+(2^{N^s}-1)\cdot (2^N-1)\]
and we fix a bijection 
\[\psi:\big\{u\subseteq N^t: u\cap N^s\neq\emptyset\neq u\cap
  [N^s,N^t)\big\} \longrightarrow [J_0,J_1).\]
Now, to define $\bar{r}^t,\bar{\jmath}^t$ and $\bar{k}^t$ we put for
$u\subseteq N^t$:   
\begin{itemize}
\item if $u\subseteq N^s$, then $\bar{r}^t(u)=\bar{r}^s(u)$,
  $\bar{\jmath}^t(u) =\bar{\jmath}^s(u)$ and $\bar{k}^t(u)=\bar{k}^s(u)$, 
\item  if $u\subseteq [N^s,N^t)$, then  $\bar{r}^t(u)=\bar{r}^q \big(
  \varphi^{-1}[u]\big)$, $\bar{\jmath}^t(u) =\bar{\jmath}^q \big(
  \varphi^{-1}[u]\big)$ and $\bar{k}^t(u)=\bar{k}^q\big(\varphi^{-1}[u]
  \big)$,   
\item if $u\cap N^s\neq \emptyset \neq u\cap [N^s,N^t)$, then
  $\bar{r}^t(u)=0$, $\bar{k}^t(u)=0$ and $\bar{\jmath}^t(u)=\psi(u)$. 
\end{itemize}
This completes the definition of $t=(N^t,\bar{r}^t,\bar{\jmath}^t,
\bar{k}^t)$. To verify that $t\in\cS$ note that clauses $(*)_1$--$(*)_4$ of
Definition \ref{sysdef} follow immediately from our choices. 

Let us argue that \ref{sysdef}$(*)_5$ is satisfied too. Suppose that
$\emptyset\neq u\subseteq N^t$, $|u|=n$ and $u=\{a_0,\ldots,a_{n-1}\}$
is the increasing enumeration. Straightforward from the definitions above, 
$\bar{k}(u)<n$. Now, 
\begin{itemize}
\item if $u\cap N^s\neq\emptyset\neq u\cap [N^s,N^t)$,  then no other
  $u'\subseteq N^t$ satisfies $\bar{\jmath}(u')=\bar{\jmath}(u)$. At the
  same time $(u\setminus \{a\})\cup\{b\}\neq u$ for $a\in u$ and $b\notin
  u$.   
\item If $u\subseteq N^s$, then
  \begin{itemize}
  \item for every $b\in N^s\setminus u$, by $(*)_5$ for $s$, either $|u\cap
    b|\neq\bar{k}^s(u)=\bar{k}^t(u)$ or\\
 $\bar{\jmath}^t(u\setminus \{a_{\bar{k}^t(u)}\}\cup \{b\})=
 \bar{\jmath}^s(u\setminus    \{a_{\bar{k}^s(u)}\}\cup \{b\})
 \neq\bar{\jmath}^s(u)=\bar{\jmath}^t(u)$ or \\
  $\bar{r}^t(u\cup\{b\})=\bar{r}^s(u\cup\{b\})<\bar{r}^s(u)=\bar{r}^t(u)$, 
\item   for every $b\in  [N^s,N^t)$ we have 
  \begin{itemize}
\item  $\bar{r}^t(u\cup\{b\})=0<\bar{r}^t(u)$ when $n=1$ and 
\item  $\bar{\jmath}^t(u \setminus \{a_{\bar{k}^t(u)}\}\cup \{b\})\neq
  \bar{\jmath}^s(u)=\bar{\jmath}^t(u)$ when $n>1$.   
\end{itemize}
\end{itemize}
\item If $u\subseteq [N^s,N^t)$, then
  \begin{itemize}
  \item for every $b\in [N^s,N^t)\setminus u$, by $(*)_5$ for $q$, either\\
    $|u\cap b|=|\varphi^{-1}[u]\cap \varphi^{-1}(b)| \neq \bar{k}^q( 
    \varphi^{-1} [ u])=\bar{k}^t(u)$ or\\
 $\bar{\jmath}^t(u\setminus \{a_{\bar{k}^t(u)}\}\cup \{b\})=
 \bar{\jmath}^q(\varphi^{-1} [u\setminus    \{a_{\bar{k}^t(u)}\}\cup \{b\}])
 \neq\bar{\jmath}^q(\varphi^{-1} [u])=\bar{\jmath}^t(u)$ or \\
  $\bar{r}^t(u\cup\{b\})=\bar{r}^q(\varphi^{-1} [u\cup\{b\}])<
  \bar{r}^q(\varphi^{-1} [u])=\bar{r}^t(u)$,  
\item   for every $b\in N^s$ we have 
  \begin{itemize}
\item  $\bar{r}^t(u\cup\{b\})=0<\bar{r}^t(u)$ when $n=1$ and 
\item  $\bar{\jmath}^t(u \setminus \{a_{\bar{k}^t(u)}\}\cup \{b\})\neq
  \bar{\jmath}^q(\varphi^{-1} [u])=\bar{\jmath}^t(u)$ when $n>1$.   
\end{itemize}
\end{itemize}
\end{itemize}
Consequently, in any possible case there is no $b\in N^t\setminus u$ such
that    
\[|u\cap b|=\bar{k}^t(u)  \quad \mbox{ and }\quad \bar{\jmath}^t (u\setminus   
  \{a_{\bar{k}^t (u)}\}\cup \{b\}) =\bar{\jmath}^t (u) \quad \mbox{ and
  }\quad \bar{r}^t (u\cup\{b\})=\bar{r}^t (u).\] 
Therefore, $q\in\cS$ and easily it is as required.
\end{proof}

Let $\langle q_i:i<\omega\rangle$ list with infinite repetitions all
elements of $\cS$. Use Claim \ref{cl1} to construct a sequence $\langle   
s_i:i<\omega\rangle$ such that for all $i<\omega$:
\begin{itemize}
\item $s_i\in\cS$, $s_i\preceq s_{i+1}$,
\item for some increasing injection $\varphi_i:N^{q_i}\longrightarrow
  [N^{s_i},N^{s_{i+1}})$ we have 
\[\bar{r}^{s_{i+1}}(\varphi_i[v]\big)=\bar{r}^{q_i}(v) \mbox{ and }
\bar{\jmath}^{s_{i+1}} (\varphi[v]\big)=\bar{\jmath}^{q_i}(v) \mbox{ and } 
\bar{k}^{s_{i+1}} (\varphi[v]\big)=\bar{k}^{q_i}(v)\]
for all $\emptyset\neq v\subseteq N^{q_i}$.
\end{itemize}
Then let $S=(\omega,\bar{r}^S,\bar{\jmath}^S,\bar{k}^S)$ be defined by 
\[\bar{r}^S=\bigcup_{i<\omega}\bar{r}^{s_i},\quad 
\bar{\jmath}^S=\bigcup_{i<\omega}\bar{\jmath}^{s_i},\quad 
\bar{k}^S=\bigcup_{i<\omega}\bar{k}^{s_i}.\]
Plainly, $S$ is a cute $\YZR(\vare)$--system. 
\end{proof}

\begin{hypothesis}
  \label{hypo3}
In the rest of this section we assume that 
\begin{itemize}
\item $2\leq\iota<\omega$, and  $\bar{c}=\langle c_m:m<\omega
  \rangle\subseteq \omega$,   
\item $T_m\subseteq {}^{\omega>} 2$ (for $m<\omega$) are trees with no 
  maximal nodes, $\bar{T}=\langle T_m:m<\omega\rangle$, and
  $B=\bigcup\limits_{m<\omega} \lim(T_m)$, 
\item there are pairwise different $\rho_0,\rho_1,\rho_2\in \can$ such that   
  \[\big|\big(\rho_j+B\big)\cap \big(\rho_{j'}+B\big)\big|\geq 2\iota\] 
for $j,j'<3$,
\item $\Mtk$ is defined as in Definition \ref{mtkDef}  and 
\item $S=(\omega,\bar{r},\bar{\jmath},\bar{k})$ is a cute
  $\YZR(\vare)$--system, $0<\vare<\omega_1$.  
\end{itemize}
\end{hypothesis}

\begin{definition}
\label{brick}
\begin{enumerate}
\item An {\em $(S,\iota,\bar{T},\bar{c})$--brick\/} is a tuple 
\[\gb=(w^\gb,n^\gb,\bar{\eta}^\gb,\bar{h}^\gb,\bar{g}^\gb, \cM^\gb)
=(w,n,\bar{\eta},\bar{h},\bar{g},\cM)\]
such that 
\begin{enumerate}
\item[$(\boxplus)_1$] $w\in [\omega]^{<\omega}$, $|w|\geq 3$, $0<n<\omega$. 
\item[$(\boxplus)_2$] $\bar{\eta}=\langle \eta_a :a \in w\rangle$ is a
  sequence of linearly independent vectors in ${}^n 2$ (over the field
  $\bbZ_2$); so in particular $\eta_a \in {}^n2$ are pairwise
  distinct non-zero sequences (for $a \in w$).  
\item[$(\boxplus)_3$] $\bar{h}=\langle h_i:i<\iota\rangle$, where
  $h_i:w^{\langle 2\rangle}\longrightarrow \omega$, and
  $c_{h_i(a ,b )}\leq n$ for $(a,b)\in w^{\langle 2\rangle}$ and $i<\iota$,
  and  $\bar{g} =\langle g_i:i<\iota\rangle$, where $g_i:w^{\langle
    2\rangle} \longrightarrow \bigcup\limits_{m<\omega} (T_m\cap {}^n2)$ for
  $i<\iota$. 
\item[$(\boxplus)_4$] Letting $n^*=n$, $u^*=\{\eta_a :a \in w\}$,
  $h^*_i(\eta_a , \eta_b )= h_i(a ,b )$ and
  $g^*_i(\eta_a , \eta_b )= g_i(a ,b )$ we have
  $(n^*,u^*, \bar{h}^*,\bar{g}^*)\in \Mtk$.
\item[$(\boxplus)_5$] $\cM$ consists of all $\bmm\in\Mtk$ such that for some 
  $\ell_*,w_*$ we  have   
\begin{enumerate}
\item[$(\boxplus)_5^{\rm a}$] $w_*\subseteq w$, $3\leq |w_*|$,
  $0<\ell_\bmm=\ell_*\leq n$,   and for each $(a ,b )\in
  (w_*)^{\langle 2\rangle}$ and $i<\iota$ we have
  $c_{h_i(a ,b )}\leq \ell_*$,
\item[$(\boxplus)_5^{\rm b}$] $u_\bmm=\{\eta_a \rest \ell_*: a \in
  w_*\}$ and $\eta_a \rest\ell_*\neq \eta_b \rest\ell_*$ for distinct
  $a ,b \in w_*$,
\item[$(\boxplus)_5^{\rm c}$] $\bar{h}_\bmm=\langle
  h^\bmm_i:i<\iota\rangle$, where
  \[h^\bmm_i:(u_\bmm)^{\langle 2\rangle} \longrightarrow \omega: (\eta_a
    \rest  \ell_*,\eta_b \rest \ell_*)\mapsto h_i(a ,b ),\]
\item[$(\boxplus)_5^{\rm d}$] $\bar{g}_\bmm=\langle
  g^\bmm_i:i<\iota\rangle$, where  
\[g^\bmm_i:(u_\bmm)^{\langle 2\rangle} \longrightarrow \bigcup\limits_{m<
    \omega} (T_m\cap {}^{\ell_*} 2):(\eta_a \rest \ell_*,\eta_b \rest 
  \ell_*)\mapsto g_i(a ,b )\rest \ell_*\]
\end{enumerate}
In the above situation we will write $\bmm= \bmm(\ell_*,w_*)=
\bmm^\gb(\ell_*,w_*)$. 
\item[$(\boxplus)_6$] If $\bmm(\ell,w_0), \bmm(\ell,w_1)\in\cM$ , $\rho\in {}^\ell
  2$ and $\bmm(\ell,w_0)\doteqdot\bmm(\ell,w_1)+\rho$, then the order
  isomorphism $\pi:w_0\longrightarrow w_1$ is a quasi--embedding  and 
 $(\eta_a \rest \ell)+\rho=\eta_{\pi(a )}\rest\ell$ for all
 $a \in w_0$.    
\item[$(\boxplus)_7$] If $\bmm(\ell_*,w_*)\in \cM$, $a \in w_*$,
  $|a \cap w_*|= \bar{k}(w_*)$, $\bar{r}(w_*)=0$, and $\bmm(\ell_*,w_*) 
  \sqsubseteq^* \bn\in \cM$,  then   $|\{\nu\in
  u_\bn:(\eta_a \rest\ell_*) \trianglelefteq \nu\}|=1$.    
\end{enumerate}
\item Suppose that $t_m=T_m\cap  {}^{n\geq} 2$ and $c_m\leq n$ for
  $m<M<\omega$. Let $\bar{t}=\langle t_m:m<M\rangle$ and
  $\bar{d}=\bar{c}\rest M$. An $(S,\iota,\bar{T},\bar{c})$--brick $\gb$ such
  that $n^\gb=n$,  $h_i^\gb(a ,b )<M$ for all $(a ,b )\in
  \big(w^\gb\big)^{\langle 2\rangle}$ and $i<\iota$ will be also
  called an {\em $(S,\iota,\bar{t},\bar{d})$--brick}.  
\item For bricks $\gb_0,\gb_1$ we write $\gb_0\Subset \gb_1$\quad if
  and only if   
\begin{itemize}
\item $w^{\gb_0}\subseteq w^{\gb_1}$, $n^{\gb_0}\leq n^{\gb_1}$, and 
\item $\eta^{\gb_0}_a \trianglelefteq \eta^{\gb_1}_a $ for all
  $a \in w^{\gb_0}$,   and    
\item $h^{\gb_1}_i\rest (w^{\gb_0})^{\langle 2\rangle}= h^{\gb_0}_i$ and
  $g^{\gb_0}_i(a ,b ) \trianglelefteq g^{\gb_1}_i(a ,b )$ for
  $i<\iota$ and $(a ,b )\in  (w^{\gb_0})^{\langle 2\rangle}$.
\end{itemize}
\end{enumerate}
\end{definition}

\begin{remark}
  \label{rembri}
  \begin{enumerate}
\item
  Note that in $(\boxplus)_5$ of Definition \ref{brick}, the set $w_*$ is 
  notdetermined uniquely by $\bmm$ and we may have
  $\bmm^\gb(\ell,w_0)=\bmm^\gb(\ell,w_1)$ for distinct  $w_0,w_1\subseteq
  w$.  
\item If $w_*\subseteq w^\gb$ has at least $3$ elements, then
    $\bmm^\gb(n^\gb,w_*)\in \cM^\gb$. 
\item We will use $(S,\iota,\bar{t},\bar{d})$--bricks for $\bar{t}=\langle
  t_m: m<M\rangle$ and $\bar{d}=\langle d_m:m<M\rangle$ (see Definition 
  \ref{brick}(2)) even if full $\bar{T},\bar{c}$ are not defined. In these
  cases we mean {\em for $\bar{T}=\langle T_m:m<\omega\rangle$ where for
    $m<M$ we  have 
\[T_m=\Big\{\nu\in {}^{\omega>} 2:\nu\rest n\in t_m\ \wedge\
  \big(\forall k<\lh(\nu)\big)\big(n\leq k\ \Rightarrow\ \nu(k)=0\big) 
  \Big\}\] 
and   $T_m={}^{\omega>} 2$ when $M\leq m<\omega$, and  some
$\bar{c}$ such that $c_m=d_m\leq n$ whenever $m<M$.}  (See Definition
\ref{fmtkDef}.) 
  \end{enumerate}
\end{remark}

\begin{observation}
\label{obsbrick}
Assume $\gb$ is an $(S,\iota,\bar{T},\bar{c})$--brick. Then:
\begin{enumerate}
\item $n^\gb\geq |w^\gb|$.
\item If $w^*\subseteq w^\gb$ and $|w^*|\geq 3$  then there is a unique
  $(S,\iota,\bar{T},\bar{c})$--brick $\gb^*$ such that $w^{\gb^*}=w^*$,
  $n^{\gb^*}=n^\gb$  and $\gb^*\Subset \gb$.\\
 We may write $\gb^*=\gb\rest  w^*$ then. 
\item If $\bmm=(n^*,u^*,\bar{h}^*,\bar{g}^*)$ is as given by Definition 
\ref{brick}$(\boxplus)_4$, then $\bmm=\bmm^\gb(n^\gb,w^\gb)\in\cM^\gb$.   
\item If $w_0\subseteq w\subseteq w^\gb$, $\bmm^\gb(\ell,w)\in\cM^\gb$ and
  $3\leq |w_0|$, then $\bmm^\gb(\ell,w_0)\in\cM^\gb$.
\item If $\varphi:w^\gb\longrightarrow \omega$ is a quasi--embedding (into
  $S$) then there is a unique $(S,\iota,\bar{T},\bar{c})$--brick $\gb^*$
  such that  
\begin{itemize}
\item $w^{\gb^*}=\varphi[w^\gb]$, $n^{\gb^*}=n^{\gb}$, $\cM^{\gb^*}=
    \cM^{\gb}$, and 
\item $\eta^\gb_a =\eta^{\gb^*}_{\varphi(a )}$, $h^\gb_i(a ,
  b )= h^{\gb^*}_i(\varphi(a ), \varphi(b ))$ and $g^\gb_i(a , 
  b )= g^{\gb^*}_i(\varphi(a ), \varphi(b ))$ for all relevant
  $a ,b ,i$. 
  \end{itemize}
This $\gb^*$ will be denoted $\varphi(\gb)$.
\end{enumerate}
\end{observation}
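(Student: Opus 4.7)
\medskip

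\noindent\textbf{Proof plan.} Each clause is, at heart, a verification that the axioms $(\boxplus)_1$--$(\boxplus)_7$ of Definition~\ref{brick} are preserved under the relevant operation, so the plan is to work through them in a uniform way, invoking Lemma~\ref{lemonrk}(8) for the points that concern membership in $\Mtk$. Clause (1) is immediate: by $(\boxplus)_2$ the vectors $\{\eta_a^\gb:a\in w^\gb\}$ are linearly independent in the $\bbZ_2$--vector space ${}^{n^\gb}2$, whose dimension is $n^\gb$, so $|w^\gb|\leq n^\gb$. Clause (3) will be a direct unfolding of the definitions: with $\ell_*=n^\gb$ and $w_*=w^\gb$, the tuple in $(\boxplus)_4$ is literally the one produced in $(\boxplus)_5^{\rm b,c,d}$, and the side conditions in $(\boxplus)_5^{\rm a}$ come from $(\boxplus)_1$ and $(\boxplus)_3$ together with $|w^\gb|\geq 3$.

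For clause (2), I would define $\gb^*$ to be the obvious restriction: $w^{\gb^*}=w^*$, $n^{\gb^*}=n^\gb$, $\eta^{\gb^*}_a=\eta^\gb_a$ for $a\in w^*$, and $h_i^{\gb^*},g_i^{\gb^*}$ the restrictions of $h_i^\gb,g_i^\gb$ to $(w^*)^{\langle 2\rangle}$, and let $\cM^{\gb^*}$ be determined by $(\boxplus)_5$ applied to these data. The axioms $(\boxplus)_1$--$(\boxplus)_4$ pass to subsets immediately; that $\cM^{\gb^*}$ consists exactly of the $\bmm^\gb(\ell_*,w_{**})$ with $w_{**}\subseteq w^*$ (and $|w_{**}|\geq 3$) follows from comparing the two instances of $(\boxplus)_5$. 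The axioms $(\boxplus)_6$ and $(\boxplus)_7$ for $\gb^*$ then read off from the same axioms for $\gb$, because every $\bmm\in\cM^{\gb^*}$ and every $\bn\in\cM^{\gb^*}$ essentially extending it are already members of $\cM^\gb$. Uniqueness is forced by $\gb^*\Subset \gb$, which pins down all data.

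For clause (4), given $\bmm^\gb(\ell,w)\in\cM^\gb$ and $w_0\subseteq w$ with $|w_0|\geq 3$, I verify that the tuple $\bmm^\gb(\ell,w_0)$ described in $(\boxplus)_5^{\rm b,c,d}$ meets the requirements of $(\boxplus)_5^{\rm a}$ (the bound $c_{h_i(a,b)}\leq\ell$ and the distinctness of the $\eta_a\rest\ell$ are inherited from the corresponding statements for $w$), and membership in $\Mtk$ is obtained by applying Lemma~\ref{lemonrk}(8) to $\bmm^\gb(\ell,w)$ with $u'=\{\eta_a^\gb\rest\ell:a\in w_0\}$.

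Clause (5) is where the real content lies; I expect the main obstacle to be showing that $(\boxplus)_6$ (and $(\boxplus)_7$) transfer along the quasi--embedding $\varphi$. I would define $\gb^*$ by transporting all data along $\varphi$ as prescribed, with $\cM^{\gb^*}=\cM^\gb$ (as sets of members of $\Mtk$; this is consistent because $u_{\bmm^{\gb^*}(\ell,w_*)}=\{\eta^{\gb^*}_a\rest\ell:a\in w_*\}=\{\eta^\gb_{a'}\rest\ell:a'\in\varphi^{-1}[w_*]\}=u_{\bmm^\gb(\ell,\varphi^{-1}[w_*])}$, and similarly for $\bar h,\bar g$). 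Axioms $(\boxplus)_1$--$(\boxplus)_5$ are then immediate. For $(\boxplus)_6$, if $\bmm^{\gb^*}(\ell,w_0)\doteqdot \bmm^{\gb^*}(\ell,w_1)+\rho$, then $\bmm^\gb(\ell,\varphi^{-1}[w_0])\doteqdot \bmm^\gb(\ell,\varphi^{-1}[w_1])+\rho$ and $(\boxplus)_6$ for $\gb$ yields a quasi--embedding $\pi':\varphi^{-1}[w_0]\to\varphi^{-1}[w_1]$ into $S$; I then take $\pi=\varphi\circ\pi'\circ\varphi^{-1}$ and use that $\varphi$ itself is a quasi--embedding, so the composition preserves the values of $\bar r,\bar\jmath,\bar k$ (this is the routine but essential point). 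The condition on $\eta$'s transfers by the same transport. Finally $(\boxplus)_7$ for $\gb^*$ follows from $(\boxplus)_7$ for $\gb$ combined with the key observation that $\varphi$ preserves $\bar k$, $\bar r$ and (when $\bar r>0$) $\bar\jmath$, so the hypothesis $|a\cap w_*|=\bar k(w_*)$, $\bar r(w_*)=0$ translates correctly between the two systems. Uniqueness is again clear because $\varphi(\gb)$ is determined on the nose by the prescribed equations.
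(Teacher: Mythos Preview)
Your verification is correct and complete. The paper itself gives no proof for this Observation---it is stated without argument, as the heading suggests---so there is nothing to compare against; your careful unfolding of $(\boxplus)_1$--$(\boxplus)_7$, the use of linear independence for (1), and the transport argument via $\varphi$ for (5) and (6) are exactly the routine checks the authors had in mind and chose to omit.
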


\begin{definition}
\label{defamal}
We say that $\bar{T}$ has {\em $(\bar{c},S)$--controlled amalgamation
  property\/} if there is a sequence $\bar{\gb}=\langle \gb_n:n<\omega
\rangle$ of $(S,\iota,\bar{T},\bar{c})$--bricks such that
\begin{enumerate}
\item $\gb_n\Subset \gb_{n+1}$ for each $n<\omega$,
\item $\bigcup\limits_{n<\omega} w^{\gb_n}=\omega$ and
  $\lim\limits_{n\to\infty} n^{\gb_n}=\infty$,
\item IF 
\begin{enumerate}
\item[(a)] $n<\omega$, $u\subseteq w\subseteq w^{\gb_n}$, $3\leq |w|$,  
\item[(b)] $\bar{k}\big(v\cup\{\delta\}\big)\neq |\delta\cap 
  v|$ whenever $v\subseteq u$ and $\delta\in w\setminus u$ and 
  $\bar{r}\big(v\cup\{\delta\}\big)=0$, 
\item[(c)]   $\pi_0,\pi_1:w \longrightarrow \omega$ are quasi--embeddings 
  (into $S$) such that $\pi_0(a )=\pi_1(a )$  for $a \in u$ and
  $\pi_0[w\setminus u]\cap \pi_1[w\setminus u]=\emptyset$, 
\end{enumerate}
THEN there is a $K<\omega$ and a quasi--embedding
$\pi:\rng(\pi_0)\cup\rng(\pi_1) \longrightarrow w^{\gb_K}$ (into $S$) such
that 
\[(\pi\circ\pi_0)(\gb_n\rest w)\Subset \gb_K\rest
  (\pi\circ\pi_0[w])\quad\mbox{ and }\quad
  (\pi\circ\pi_1)(\gb_n\rest w)\Subset \gb_K\rest (\pi\circ\pi_1[w]),\] 
\item IF 
\begin{enumerate}
\item[(a)] $n<\omega$, $w\subseteq w^{\gb_n}$, $3\leq |w|$,  
\item[(b)] $\pi_0:w \longrightarrow \omega$ is a quasi--embedding 
  (into $S$) and $\rng(\pi_0)\subseteq u\in [\omega]^{<\omega}$, 
\end{enumerate}
THEN there is a $K<\omega$ and a quasi--embedding
$\pi:u\longrightarrow w^{\gb_K}$ (into $S$) such that 
$(\pi\circ\pi_0)(\gb_n\rest w)\Subset \gb_K\rest
  (\pi\circ\pi_0[w])$.
\end{enumerate}
\end{definition}

The name of the $(\bar{c},S)$--controlled {\em amalgamation property\/}
comes from the third part of the demand. This demand is taylored to
guarantee that {\bf if}
\begin{enumerate}
\item[(i)] $\gb_0,\gb_1$ are $(S,\iota,\bar{T},\bar{c})$--bricks,
  $n^{\gb_0}=n^{\gb_1}$, $|w^{\gb_0}|=|w^{\gb_1}|$  and  
\item[(ii)]  the order isomorphism $\pi:w^{\gb_0} \longrightarrow w^{\gb_1}$
is a quasi--embedding (into $S$), $\pi(a )=a $  for $a \in
w^{\gb_0}\cap  w^{\gb_1}$,  and $\pi(\gb_0)=\gb_1$, and
\item[(iii)] for each $v\subseteq w^{\gb_0}\cap w^{\gb_1}$ and $\delta\in
  w^{\gb_0} \setminus w^{\gb_1}$,
\[\bar{r}\big(v\cup\{\delta\}\big)=0\quad\Rightarrow \quad \bar{k}\big(
  v\cup\{\delta\}\big) \neq |\delta\cap v|,\]
\end{enumerate}
{\bf then} there is an $(S,\iota,\bar{T},\bar{c})$--brick $\gb$ and a quasi
embedding $\pi^*:w^{\gb_0}\cup w^{\gb_1}\longrightarrow \omega$ such that 
$\pi^*(\gb_0)\Subset \gb$ and $\pi^*(\gb_1)\Subset \gb$. Such $\gb$ may be
thought of as an amalgamation of $\gb_0,\gb_1$ over $w^{\gb_0}\cap
w^{\gb_1}$. Note that we may demand the existence of amalgamations only when
$\gb_0,\gb_1$ satisfy condition (iii). Without it the intended $\gb$ would
have to violate demand \ref{brick}(1)$(\boxplus)_7$.
\medskip

In the next section we will construct $\bar{T}$ with the
$(\bar{c},S)$--controlled amalgamation property. Here we show the main
reason to consider such $\bar{T}$ and the associated $\Sigma^0_2$ sets.

\begin{theorem}
 \label{force}
Assume that 
\begin{enumerate}
\item $2\leq\iota<\omega$,  and 
  $\bar{c}=\langle c_m:m<\omega\rangle\subseteq \omega$,  
\item $T_m\subseteq {}^{\omega>} 2$ (for $m<\omega$) are trees with no 
  maximal nodes, $\bar{T}=\langle T_m:m<\omega\rangle$, and
  $B=\bigcup\limits_{m<\omega} \lim(T_m)$, 
\item $S=(\omega,\bar{r},\bar{\jmath},\bar{k})$ is a cute 
  $\YZR(\vare)$--system, $0<\vare<\omega_1$,
\item $\bar{T}$ has $(\bar{c},S)$--controlled amalgamation
  property , and
\item ${\rm NPr}^\vare(\lambda)$ holds true. 
\end{enumerate}
Then there is a ccc forcing notion $\bbP$ of size $\lambda$ such that    
\[\begin{array}{l}
\forces_{\bbP}\mbox{`` there is a sequence
    }\langle\eta_\alpha:\alpha<\lambda\rangle
\mbox{ of distinct elements of $\can$ such that}\\
\qquad \big|(\eta_\alpha+B)\cap
(\eta_\beta+B)\big|\geq 2\iota\mbox{ for all }\alpha,\beta<\lambda\mbox{
    ''.} 
\end{array}\]
\end{theorem}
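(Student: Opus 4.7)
The plan is to build $\bbP$ as a forcing of finite ``brick-like'' approximations living on $\lambda$. Fix a model $\bbM=\bbM(\vare,\lambda)$ witnessing ${\rm NPr}^\vare(\lambda)$ as in Definition \ref{hypo2}, and for each finite $w\subseteq\lambda$ with $|w|\geq 3$ let $s(w)$ be the associated $\YZR(\vare)$-system from Example \ref{bases}. Cuteness of $S$ provides quasi-embeddings $\varphi:s(w)\to S$ into arbitrarily late intervals of $\omega$. Also fix a sequence $\bar\gb=\langle\gb_n:n<\omega\rangle$ witnessing the $(\bar c,S)$-controlled amalgamation property of $\bar T$ from Definition \ref{defamal}.

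A condition $p\in\bbP$ will be a tuple $p=(w^p,n^p,\bar\eta^p,\bar h^p,\bar g^p)$ obeying $(\boxplus)_1$--$(\boxplus)_4$ of Definition \ref{brick} but with $w^p\in[\lambda]^{<\omega}$ of size at least $3$, subject to the requirement that for some quasi-embedding $\varphi:s(w^p)\to S$ the pushforward $\varphi(p)$ is an $(S,\iota,\bar T,\bar c)$-brick with $\varphi(p)\Subset\gb_K$ for some $K$. The order is the natural brick-extension: $p\leq q$ iff $w^p\subseteq w^q$, $n^p\leq n^q$, $\eta^p_\alpha\trianglelefteq\eta^q_\alpha$ for $\alpha\in w^p$, $h^q_i\rest (w^p)^{\langle 2\rangle}=h^p_i$, and $g^p_i(\alpha,\beta)\trianglelefteq g^q_i(\alpha,\beta)$ for $i<\iota$. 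Setting $\name{\eta}_\alpha=\bigcup\{\eta^p_\alpha:p\in\name{G}_\bbP,\ \alpha\in w^p\}$, linear independence $(\boxplus)_2$ forces the $\name{\eta}_\alpha$'s to be distinct elements of $\can$, and clauses (d), (e), (f) of Definition \ref{mtkDef}, carried through the bricks, yield witnesses $G_i(\alpha,\beta)\in B$ realizing $|(B+\name{\eta}_\alpha)\cap(B+\name{\eta}_\beta)|\geq 2\iota$.

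The density lemmas reduce to the brick machinery: to add a new $\alpha\in\lambda$ to $w^p$, one quasi-embeds $s(w^p\cup\{\alpha\})$ into $S$ extending the current embedding (using cuteness), locates the image inside some $w^{\gb_K}$ using \ref{defamal}(4), and pulls back to a condition on $w^p\cup\{\alpha\}\subseteq\lambda$; since $n^{\gb_n}\to\infty$, the same device produces conditions with $n^p$ as large as desired, and determines $\name{\eta}_\alpha$ on any given initial segment.

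The main obstacle is verifying ccc. Given $\{p_\xi:\xi<\omega_1\}\subseteq\bbP$, apply the $\Delta$-system lemma and iterated pigeonhole to pass to an uncountable $\Xi\subseteq\omega_1$ such that $\{w^{p_\xi}\}$ is a $\Delta$-system with root $w^*$, $n^{p_\xi}=n^*$ is constant, $\eta^{p_\xi}_\alpha$ for $\alpha\in w^*$ is constant, and for $\xi<\eta$ in $\Xi$ the unique order isomorphism $\sigma_{\xi\eta}:w^{p_\xi}\to w^{p_\eta}$ fixing $w^*$ is a quasi-embedding of $s(w^{p_\xi})$ onto $s(w^{p_\eta})$ transporting $p_\xi$ onto $p_\eta$. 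Clause $(\circledast)_{\rm f}$ of Definition \ref{hypo2}---for each fixed $v\subseteq\lambda$ and each fixed value of $\bj$, only countably many $\alpha$ satisfy $\rksp(v\cup\{\alpha\})=-1$ with $\bk(v\cup\{\alpha\})$ equal to the position of $\alpha$ in $v\cup\{\alpha\}$---allows a further thinning guaranteeing that for any $\xi_1<\xi_2$ in $\Xi$ the combinatorial clause \ref{defamal}(3)(b) holds with $w=w^{p_{\xi_1}}$ and $u=w^*$. Fix such $\xi_1<\xi_2$. Cuteness applied to $s(w^{p_{\xi_1}}\cup w^{p_{\xi_2}})$ yields a single quasi-embedding $\psi$ into $S$; set $\varphi_j=\psi\rest w^{p_{\xi_j}}$, so $\varphi_1,\varphi_2$ agree on $w^*$ and have disjoint images off $w^*$. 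Choose $n$ with $\varphi_j(p_{\xi_j})\Subset\gb_n$ ($j=1,2$), put $w=\varphi_1[w^{p_{\xi_1}}]\subseteq w^{\gb_n}$, $u=\varphi_1[w^*]$, $\pi_0=\mathrm{id}_w$, and $\pi_1=\varphi_2\circ\sigma_{\xi_1\xi_2}\circ\varphi_1^{-1}:w\to\omega$. Applying \ref{defamal}(3) yields $K<\omega$, a quasi-embedding $\pi:\rng(\pi_0)\cup\rng(\pi_1)\to w^{\gb_K}$, and $\gb_K$ with $(\pi\circ\pi_j)(\gb_n\rest w)\Subset\gb_K\rest(\pi\circ\pi_j)[w]$ for $j=0,1$. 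Then the combined map $\pi\circ\psi:w^{p_{\xi_1}}\cup w^{p_{\xi_2}}\to w^{\gb_K}$ is itself a quasi-embedding, and pulling back the brick $\gb_K$ restricted to its image along $\pi\circ\psi$ defines a condition $r\in\bbP$ with $w^r=w^{p_{\xi_1}}\cup w^{p_{\xi_2}}$ that extends both $p_{\xi_1}$ and $p_{\xi_2}$, proving their compatibility and hence ccc.
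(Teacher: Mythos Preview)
Your overall architecture is the same as the paper's and the density and naming arguments are fine, but the ccc argument contains a genuine gap at the point where you write ``Choose $n$ with $\varphi_j(p_{\xi_j})\Subset\gb_n$ ($j=1,2$)''. Here $\varphi_j=\psi\rest w^{p_{\xi_j}}$ for a \emph{freshly chosen} quasi-embedding $\psi$ of $s(w^{p_{\xi_1}}\cup w^{p_{\xi_2}})$ into $S$, obtained from cuteness. Membership of $p_{\xi_j}$ in $\bbP$ only guarantees that \emph{some} quasi-embedding pushes $p_{\xi_j}$ into some $\gb_K$; there is no reason the particular $\psi$ you chose does so. The set $\psi[w^{p_{\xi_j}}]$ will of course sit inside $w^{\gb_n}$ for large $n$ (since $\bigcup_n w^{\gb_n}=\omega$), but the brick data $\eta^{\gb_n}_{\psi(\alpha)}$, $h^{\gb_n}_i$, $g^{\gb_n}_i$ on that set need not agree with $\eta^{p_{\xi_j}}_\alpha$, $h^{p_{\xi_j}}_i$, $g^{p_{\xi_j}}_i$, so the claimed $\Subset$-relation is unjustified, and your subsequent invocation of Definition~\ref{defamal}(3) does not get off the ground.

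The fix is exactly what the paper does: in the $\Delta$-system cleaning, also thin on the witnesses, i.e.\ arrange that all $p_\xi$ come with witnesses $(\varphi_\xi,N_\xi)$ satisfying $\rng(\varphi_\xi)=w$ and $N_\xi=N$ for a fixed $w\subseteq w^{\gb_N}$ and fixed $N$ (possible since there are only countably many choices). Now $w\subseteq w^{\gb_N}$ already carries the correct brick data, and the cuteness-produced $\psi:s(w^{p_{\xi_1}}\cup w^{p_{\xi_2}})\to S$ is used only to build the targets of the maps $\pi_0=(\psi\rest w^{p_{\xi_1}})\circ\varphi_{\xi_1}^{-1}$ and $\pi_1=(\psi\rest w^{p_{\xi_2}})\circ\varphi_{\xi_2}^{-1}$ from $w$ into $\omega$. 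Definition~\ref{defamal}(3) is then applied with this $w\subseteq w^{\gb_N}$ and $u=\varphi_{\xi_1}[w^*]$; clause (3)(b) follows from your $(\circledast)_{\rm f}$ argument transported via the quasi-embedding. The resulting $\pi$ and $K$ give the common extension $p(n^{\gb_K},\pi\circ\psi,K)$. In short: the fresh $\psi$ should be the \emph{codomain} side of the amalgamation input, not a replacement for the witness embeddings on the \emph{domain} side.
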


\begin{proof}
Let a sequence $\bar{\gb}=\langle \gb_n:n<\omega
\rangle$ of $(S,\iota,\bar{T},\bar{c})$--bricks witness the 
$(\bar{c},S)$--controlled amalgamation property for $\bar{T}$. 

First, for $a,b\in \omega$ and $i<\iota$ let
\begin{itemize}
\item $\eta^*_a=\bigcup\big\{\eta^{\gb_n}_a:a\in w^{gb_n},\ n\in
  \omega\big\}$,
\item $g^*_i(a,b)=\bigcup\big\{g^{\gb_n}(a,b): a,b\in w^{gb_n},\ n\in\omega
  \big\}$,
\item $h^*_i(a,b)=h^{\gb_n}(a,b)$ for some (equivalently: all) $n\in\omega$
  such that $a,b\in w^{\gb_n}$.
\end{itemize}
Then $\eta^*_a\in\can$, $g^*_i(a,b)\in\lim\big(T_{h^*_i(a,b)}\big)$, and
$\eta^*_a+g^*_i(a,b)= \eta^*_b+g^*_i(b,a)$, and there are no repetitions in
$\langle g^*_i(a,b),g_i^*(b,a):i<\iota\rangle$ nor in $\langle \eta^*_a:a\in
\omega\rangle$. Hence $\langle \eta^*_a:a\in\omega\rangle$ is a sequence of
distinct elements of $\can$ such that
\[\big|(\eta^*_a+B)\cap (\eta^*_b+B)\big|\geq 2\iota\]
for all $a,b\in\omega$. So we are done when $\lambda=\omega$. 

Now, let us assume that $\lambda$ is uncountable and let $\bbM= \big(\lambda,
\{R^\bbM_{n,j}\}_{n,j<\omega}\big)$ be the model fixed in Definition
\ref{hypo2}, let $\rksp$ be the associated rank and let $\bj,\bk:
[\lambda]^{<\omega}\setminus\{\emptyset\} \longrightarrow \omega$ 
be the ``witness functions'' fixed there.

A condition in $\bbP$ is a tuple 
\[p=(u^p,n^p,\bar{\eta}^p,\bar{h}^p,\bar{g}^p)\]
such that $u^p\in [\lambda]^{<\omega}$, $3\leq |u^p|$ and for some
quasi--embedding $\varphi:u^p \longrightarrow \omega$ of the
system $s(u^p)$ associated with $u^p$ (see Definition \ref{bases}) into $S$
and for some $N<\omega$ we have
\begin{itemize}
\item $\varphi[u^p] \subseteq w^{\gb_N}$, $n^p\leq n^{\gb_N}$,
  $\bmm^{\gb_N}(n^p, \varphi^p[u]) \in \cM^{\gb_N}$, 
\item $\bar{\eta}^p=\langle \eta^p_\alpha:\alpha\in u^p\rangle$ and 
  $\eta^p_\alpha= \eta^{\gb_N}_{\varphi(\alpha)}\rest n^p$,
\item $\bar{h}^p=\langle h^p_i:i<\iota\rangle$, where
\item  $h^p_i:\big(u^p\big)^{\langle 2\rangle}\longrightarrow \omega$ are such that 
    $h^p_i(\alpha,\beta)=h^{\gb_N}_i(\varphi(\alpha),\varphi(\beta))$,
\item $\bar{g}^p=\langle g^p_i:i<\iota\rangle$, where 
  $g^p_i:\big(u^p\big)^{\langle 2\rangle}\longrightarrow
    \bigcup\limits_{m<\omega} (T_m\cap {}^{n^p}2)$ are such that  
    $g^p_i(\alpha,\beta)=g^{\gb_N}_i(\varphi(\alpha),\varphi(\beta))\rest n_p$.
\end{itemize}
(For $\varphi$ and $N$ as above we say that they {\em witness $p\in\bbP$.}\/) 

A condition $q\in\bbP$ {\em is stronger than \/} $p\in
\bbP$ ($p\leq q$ in short) if and only if 
\begin{itemize}
\item $u^p\subseteq u^q$, $n^p\leq n^q$, and 
\item $\eta^p_\alpha\trianglelefteq \eta^q_\alpha$ for all
  $\alpha\in u^p$,   and    
\item $h^q_i\rest (u^p)^{\langle 2\rangle}= h^p_i$ and
  $g^p_i(\alpha,\beta) \trianglelefteq g^q_i(\alpha,\beta)$ for 
  $i<\iota$ and $(\alpha,\beta)\in  (u^p)^{\langle 2\rangle}$.
\end{itemize}
Clearly, $(\bbP,\leq)$ is a partial order of size $\lambda$.

\begin{claim}
\label{cl3}
\begin{enumerate}
\item Suppose that $u\subseteq \lambda$ is a finite set with at least 3
  elements  and $\varphi:u\longrightarrow \omega$ is a quasi--embedding of
  $s(u)$ into  $S$. Assume $\varphi[u]\subseteq w^{\gb_N}$ and $n\leq
  n^{\gb_N}$ is such that $\bmm^{\gb_N}(n,\varphi[u])\in \cM^{\gb_N}$. 
  Then there is a unique condition $p=p(n,\varphi,N) \in \bbP$ such that
  $n^p=n$ and $\varphi$ and $N$ witness $p\in\bbP$.  
\item Assume that $\emptyset\neq u_0\subseteq u_1\subseteq \lambda$, $u_1$
  finite, and $\varphi: u_1\longrightarrow \omega$ is a quasi--embedding
  into   $S$. Suppose $n_0,n_1,K_0,K_1$ are such that $p(n_0,\varphi\rest
  u_0,K_0)$ and $p(n_1,\varphi,K_1)$ are well defined and $n_0\leq n_1$,
  $K_0\leq K_1$. Then $p(n_0,\varphi\rest u_0,K_0) \leq p(n_1,\varphi, K_1)$.    
\end{enumerate}
\end{claim}

\begin{claim}
  \label{cl4}
$\bbP$ has the Knaster property. 
\end{claim}

\begin{proof}[Proof of the Claim]
  Suppose that $\langle p_\xi:\xi<\omega_1\rangle$ is a sequence of pairwise 
distinct conditions from $\bbP$ and let
\[p_\xi=\big(u^\xi,n^\xi,\bar{\eta}^\xi,\bar{h}^\xi,\bar{g}^\xi\big)\]
where $\bar{\eta}^\xi=\langle\eta^\xi_\alpha: \alpha\in u^\xi\rangle$,
$\bar{h}^\xi= \langle h^\xi_i:i<\iota\rangle$, and $\bar{g}^\xi= \langle
g^\xi_i:i<\iota\rangle$. Let $\varphi_\xi$ and $N_\xi$ witness
$p_\xi\in\bbP$. 

Use the standard $\Delta$--system cleaning procedure to find an uncountable
set $A\subseteq \omega_1$ such that the following demands
$(\oplus)_1$--$(\oplus)_4$ are satisfied.
\begin{enumerate}
\item[$(\oplus)_1$] $\{u^\xi:\xi\in A\}$ forms a $\Delta$--system with
  kernel $u$.
\item[$(\oplus)_2$] If  $\xi,\varsigma\in A$, then $|u^\xi|=
  |u^\varsigma|$ and $n^\xi=n^\varsigma$. 
\item[$(\oplus)_3$] If $\xi<\varsigma$ are from $A$ and $\pi: u^\xi
  \longrightarrow u^\varsigma$ is the order isomorphism, then  
  \begin{enumerate}
  \item[(a)] $\pi(\alpha)=\alpha$ for $\alpha\in u^\xi\cap
    u^\varsigma$,  
  \item[(b)] if $\emptyset\neq v\subseteq u^\xi$, then
    $\rksp(v)=\rksp(\pi[v])$, $\bj(v)= \bj(\pi[v])$ and
    $\bk(v)=\bk(\pi[v])$,  
  \item[(c)] $\eta_\alpha^\xi=\eta_{\pi(\alpha)}^{\varsigma}$ (for
    $\alpha\in  w_\xi$), 
  \item[(d)] $g_i(\alpha,\beta)=g_i(\pi(\alpha),\pi(\beta))$ and
    $h_i(\alpha,\beta) =h_i(\pi(\alpha),\pi(\beta))$ for $(\alpha,\beta)\in
    (w_\xi)^{\langle 2\rangle}$ and $i<\iota$,
  \end{enumerate}
and 
\item[$(\oplus)_4$] $\rng(\varphi_\xi)=\rng(\varphi_\varsigma)=w$  and
  $N_\xi= N_\varsigma=N$ for $\xi,\varsigma\in A$.
\end{enumerate}
Note that then also 
\begin{enumerate}
\item[$(\oplus)_5$] if $\xi\in A$, $v\subseteq u$ and $\delta\in
  u^\xi\setminus u$ are such that $\rksp\big(v\cup\{\delta\}\big)=-1$, then
  $\bk\big(v\cup\{\delta\}\big)\neq |\delta\cap v|$.
\end{enumerate}
[Why? Suppose $\rksp\big(v\cup\{\delta\}\big)=-1$ and
$k=\bk\big(v\cup\{\delta\}\big)= |\delta\cap v|$, $j=\bj\big( v\cup
\{\delta\}\big)$.  For $\varsigma\in A$ let  $\pi_\varsigma: u^\xi
\longrightarrow u^\varsigma$ be the order isomorphism and let
$\delta_\varsigma=\pi_\varsigma(\delta)$. By $(\oplus)_3$ we know that
$k=\bk\big(v\cup\{\delta_\varsigma\}\big) =|\delta_\varsigma\cap v|$ and
$j=\bj\big(v\cup\{\delta_\varsigma\}\big)$. Therefore, letting
$v\cup\{\delta\}=\{a_0,\ldots,a_{n-1}\}$ be the increasing enumeration, for
every $\varsigma\in A$ we have $\bbM\models R_{n,j}[a_0,\ldots,a_{k-1},
\delta_\varsigma, a_{k+1},\ldots, a_{n-1}]$. Hence the set 
\[\{b<\lambda: \bbM\models R_{n,j}[a_0,\ldots,a_{k_1}, b , a_{k+1},\ldots,
  a_{n-1}] \}\]
is uncountable, contradicting $(\circledast)_{\rm f}$ of \ref{hypo2}.]
\medskip

Let us argue that for distinct $\xi,\varsigma$ from $A$ the conditions
$p_\xi,p_\varsigma$ are compatible. So let $\xi,\varsigma\in A$,
$\xi<\varsigma$. Let $v^*=u^\xi\cup u^\varsigma$ and let $s(v^*)$ be the
finite $\YZR(\vare)$--system associated with $v^*$ (see Definition
\ref{bases}). Since $S$ is cute, it includes a copy of $s(v^*)$, so there is
a quasi--embedding $\psi: v^*\longrightarrow \omega$ of $s(v^*)$ into
$S$. Then, remembering $(\oplus)_4$,  we may choose two quasi--embeddings
$\pi_0,\pi_1: w\longrightarrow \rng(\psi)$ such that $\pi_0(\alpha)= \pi_1(
\alpha)$ for $\alpha\in\varphi_\xi[u]$ and $\pi_0\circ \varphi_\xi
=\psi\rest u^\xi$ and  $\pi_1\circ \varphi_\varsigma= \psi\rest
u^\varsigma$.  Apply Definiton \ref{defamal}(3) to $N,\varphi_\xi[u],
w,\pi_0,\pi_1$ to choose $K$ and a quasi--embedding
$\pi:\rng(\psi)\longrightarrow w^{\gb_K}$ such that
\[(\pi\circ\pi_0)(\gb_N\rest w)\Subset \gb_K\rest
  (\pi\circ\pi_0[w])\quad\mbox{ and }\quad
  (\pi\circ\pi_1)(\gb_N\rest w)\Subset \gb_K\rest (\pi\circ\pi_1[w]).\] 
Then the condition $p(n^{\gb_K},\pi\circ\psi,K)$ is a common upper bound of
$p_\xi,p_\varsigma$ (remember Claim \ref{cl3}(2)).  
\end{proof}

\begin{claim}
  \label{cl2}
The following sets are open dense in $\bbP$:

$D_\alpha=\{p\in\bbP:\alpha\in u^p\}$ for $\alpha<\lambda$, and 

$D^n=\{p\in\bbP:n^p>n\}$ for $n<\omega$.
\end{claim}

\begin{proof}[Proof of the Claim]
To show the density of $D_\alpha$ suppose that $p\in\bbP$ and $\alpha\in
\lambda\setminus u^p$. Let a quasi-embedding $\varphi:u^p\longrightarrow
\omega$ and $N<\omega$ witness $p\in\bbP$. Let $w=\rng(\varphi)$. 
Put $v^*=u^p\cup\{\alpha\}$ and let $s(v^*)$ be the finite
$\YZR(\vare)$--system associated with $v^*$. Since $S$ is cute, it includes a
copy of $s(v^*)$, so there is a quasi--embedding $\psi:v^*\longrightarrow
\omega$ of $s(v^*)$ into $S$. Applying \ref{defamal}(4) to $w$,
$\pi_0=\psi\circ \varphi^{-1}$ and $u=\rng(\psi)$ we may find $K<\omega$ and
a quasi--embedding $\pi:u\longrightarrow w^{\gb_K}$ such that 
\[(\pi\circ\pi_0)\big(\gb_N\rest w\big)\Subset \gb_K\rest (\pi\circ
  \pi_0[w]).\] 
Then the condition $p(n^{\gb_K},\pi\circ \psi,K)$ belongs to $D_\alpha$ and
it is stronger than $p$. 

To argue that $D^n$ is dense (for $n<\omega$) suppose that $p\in\bbP$ and
$n^p\leq n$. Let $\varphi:u^p\longrightarrow \omega$ and $N<\omega$ witness
that $p\in\bbP$. Let $M>n$ be such that $n^{\gb_M}>n$. Then the condition
$p(n^{\gb_M},\varphi,M)$ belongs to $D^n$ and it is stronger than $p$.
\end{proof} 

Now, for $(\alpha,\beta)\in\lambda^{\langle 2\rangle}$ we define
$\bbP$--names $\name{\eta}_\alpha$ and $\name{g}_i(\alpha, \beta)$ by 
\[\forces_\bbP\mbox{``} \name{\eta}_\alpha=\bigcup\{\eta^p_\alpha:
  \alpha\in u^p\ \wedge\ p\in\name{G}\}\mbox{ and }
\name{g}_i(\alpha,\beta)=\bigcup\{g^p_i(\alpha,\beta):
  \alpha,\beta\in u^p\ \wedge\ p\in\name{G}\}\mbox{''}.\]
By the definition of the order of $\bbP$ and by Claim \ref{cl2} we easily
see that 
\[  \begin{array}{ll}
\forces_\bbP &\mbox{`` } \langle \name{\eta}_\alpha:\alpha<\lambda\rangle 
  \subseteq \can \mbox{ are pairwise distinct,}\\
& \ \ \name{g}_i(\alpha,\beta)\in \bigcup\limits_{m<\omega} \lim(T_m) \mbox{ 
  for } (\alpha,\beta)\in \lambda^{\langle 2\rangle},\ i<\iota,\\
&\ \ \name{\eta}_\alpha+\name{\eta}_\beta=\name{g}_i(\alpha,\beta)+
  \name{g}_i(\beta,\alpha) \mbox{ for } (\alpha,\beta)\in \lambda^{\langle
  2\rangle},\ i<\iota \mbox{''}.
  \end{array}\] 
Hence $\bbP$ is as required.
\end{proof}

\section{Existence of $\Sigma^0_2$ sets with the amalgamation    
  property} 
Here we will prove our main result: there exists $\bar{T}$ with the
amalgamation property (over a cute $\YZR(\vare)$--system) and with the
nondisjointness rank $\ndrk$ bounded by $\omega\cdot (vare+2)+2$. For this
$\bar{T}$ (or rather $\bigcup\limits_{m<\omega} \lim(T_m)$) we may force
many $2\iota$--non-disjoint translations without adding a perfect set of
such translations. 

\begin{definition}
  \label{fmtkDef}
  Assume that $\bar{t}, M,\iota,n$ are such that 
  \begin{itemize}
\item $2\leq\iota<\omega$, and $M,n<\omega$, and 
\item $\bar{t}=\langle t_m:m<M\rangle$ where each $t_m\subseteq
  {}^{n\geq} 2$ is a tree with maximal nodes of length $n$ (for
  $m<M$).
 \end{itemize}
Let $\bar{T}^*=\langle T_m^*:m<\omega\rangle$ where for $m<M$ we 
have 
\[T_m^*=\Big\{\nu\in {}^{\omega>} 2:\nu\rest n\in t_m\ \wedge\
  \big(\forall k<\lh(\nu)\big)\big(n\leq k\ \Rightarrow\ \nu(k)=0\big) 
  \Big\}\] 
and   $T_m^* ={}^{\omega>} 2$ when $M\leq m<\omega$.
\begin{enumerate}
\item We say that $\bar{t}$ is {\em $(M,\iota,n)$--usable\/} if,  letting 
  $B=\bigcup\limits_{m<M} \lim(T_m^*)$ [ {\em sic} ], there are 
  pairwise different $\rho_0,\rho_1,\rho_2\in \can$ such that     
  \[\big|\big(\rho_j+B\big)\cap \big(\rho_{j'}+B\big)\big|\geq 2\iota\] 
for $j,j'<3$. 
\item We define $\fMtk$ as the set of all tuples
  $\bmm=(\ell_\bmm,u_\bmm, \bar{h}_\bmm, \bar{g}_\bmm)\in \Mtks$ such
  that $\ell_\bmm\leq n$ and $\rng(h^\bmm_i) 
  \subseteq M$ for each $i<\iota$. (Remember, $\Mtks$ was defined in
  Definition \ref{mtkDef} (for $\bar{T}^*,\iota$)). 
 \end{enumerate}
\end{definition}

\begin{observation}
If $\bmm\in \fMtk$ and $\rho\in {}^{\ell_\bmm}2$, then $\bmm+\rho \in\fMtk$
(see Definition \ref{traDef}).   
\end{observation}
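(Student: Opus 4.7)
The plan is to verify the claim by direct inspection of Definition \ref{traDef}, treating it as a routine reduction to the analogous observation for $\Mtks$ stated right after Definition \ref{traDef}.

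First, I would recall that $\fMtk \subseteq \Mtks$ is cut out by exactly two extra requirements on top of the conditions (a)--(f) defining $\Mtks$: namely $\ell_\bmm \leq n$, and $\rng(h^\bmm_i)\subseteq M$ for every $i<\iota$. So given $\bmm\in\fMtk$ and $\rho\in {}^{\ell_\bmm}2$, I need only check three things about $\bmm+\rho=(\ell',u',\bar h',\bar g')$: (i) it lies in $\Mtks$, (ii) $\ell'\leq n$, and (iii) $\rng(h'_i)\subseteq M$ for each $i<\iota$.

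For (i), the observation displayed immediately after Definition \ref{traDef} asserts that $\Mtk$ (hence also $\Mtks$, by the same proof applied to the sequence $\bar T^*$ in place of $\bar T$) is closed under translation by any $\rho\in {}^{\ell_\bmm}2$. For (ii), this is immediate from Definition \ref{traDef} since $\ell'=\ell_\bmm\leq n$. For (iii), observe that in Definition \ref{traDef} the functions $h'_i$ are obtained by reindexing $h_i$ along the bijection $\eta\mapsto\eta+\rho$ on $u_\bmm$, so $\rng(h'_i)=\rng(h^\bmm_i)\subseteq M$.

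There is no real obstacle; the content of the observation is simply that the two defining constraints of $\fMtk$ (over and above those of $\Mtks$) depend only on $\ell_\bmm$ and on the range of the $h$-functions, both of which are preserved verbatim by translation. The extension of $\rho\in {}^{\ell_\bmm}2$ to an element of $\can$ via the convention $\bmm+\rho=\bmm+(\rho\rest\ell_\bmm)$ at the end of Definition \ref{traDef} does not affect the argument.
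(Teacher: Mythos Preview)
Your proposal is correct and is exactly the natural verification one would make; the paper itself states the observation without proof, treating it as immediate from Definition~\ref{traDef} and the earlier observation on $\Mtks$. Your argument unpacks precisely why translation preserves the two additional constraints defining $\fMtk$, which is all that is needed.
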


\begin{lemma}
[See {\cite[Lemma 2.3]{RoRy18}}]
 \label{litlem}
Let $0<\ell<\omega$ and let $\cB\subseteq {}^\ell 2$ be a linearly 
independent set of vectors (in $({}^\ell2,+)$ over $(2,+_2,\cdot_2)$). If
$\cA\subseteq {}^\ell 2$, $|\cA|\geq 5$ and $\cA+\cA\subseteq \cB+\cB$,
then for a unique $x\in {}^\ell 2$ we have $\cA+x\subseteq \cB$.
\end{lemma}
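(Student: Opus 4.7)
By translating, we may assume without loss of generality that $\mathbf{0}\in\cA$; indeed, fixing any $a_0\in\cA$ and working with $\cA'=\cA+a_0$ we have $\mathbf{0}\in\cA'$, $|\cA'|=|\cA|\geq 5$, and $\cA'+\cA'=\cA+\cA\subseteq\cB+\cB$, and a translation $\cA'+y\subseteq\cB$ yields $\cA+(a_0+y)\subseteq\cB$. Under this reduction, every $a\in\cA$ satisfies $a=a+\mathbf{0}\in\cA+\cA\subseteq\cB+\cB$, so we may write $a=b+b'$ with $b,b'\in\cB$.

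The plan is to assign to each nonzero $a\in\cA$ a canonical pair in $\cB$ and then show these pairs share a common vector. For $a\neq\mathbf{0}$ the representation $a=b+b'$ forces $b\neq b'$, and linear independence of $\cB$ makes the unordered pair $\varphi(a):=\{b,b'\}\in[\cB]^2$ unique: any other decomposition $a=c+c'$ would give $b+b'+c+c'=0$, hence by independence the multiset $\{b,b',c,c'\}$ consists of matching pairs, forcing $\{b,b'\}=\{c,c'\}$. The key claim is that for distinct $a,a'\in\cA\setminus\{\mathbf{0}\}$, the sets $\varphi(a)$ and $\varphi(a')$ meet. Write $a+a'=c+c'$ with $c,c'\in\cB$ (using $\cA+\cA\subseteq\cB+\cB$); then $b_a+b'_a+b_{a'}+b'_{a'}+c+c'=0$. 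By linear independence this multiset of (at most) six elements of $\cB$ must have every element of even multiplicity. A short case analysis -- separating $c=c'$ from $c\neq c'$ -- shows that if $\varphi(a)\cap\varphi(a')=\emptyset$ then the four elements $b_a,b'_a,b_{a'},b'_{a'}$ are distinct and only two of them can be ``doubled'' by $c,c'$, leaving at least two with multiplicity $1$, a contradiction (the $c=c'$ case collapses to $a=a'$ directly).

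Since $|\cA\setminus\{\mathbf{0}\}|\geq 4$, the family $\{\varphi(a):a\in\cA\setminus\{\mathbf{0}\}\}$ is a pairwise intersecting family of at least four $2$-element sets. A standard elementary fact asserts that any pairwise intersecting family of $2$-sets of size $\geq 4$ has a common element: the only obstruction is the ``triangle'' $\{\{x,y\},\{y,z\},\{x,z\}\}$, which any fourth $2$-set intersecting all three already coincides with. Let $y\in\cB$ be a common member of all $\varphi(a)$. Then for each $a\in\cA\setminus\{\mathbf{0}\}$ we have $a+y\in\cB$, and trivially $\mathbf{0}+y=y\in\cB$, so $\cA+y\subseteq\cB$.

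For uniqueness, if $\cA+x\subseteq\cB$ and $\cA+x'\subseteq\cB$ with $x\neq x'$, pick two distinct $a,a'\in\cA$ with $a+a'\neq x+x'$ (possible since $|\cA|\geq 3$ and a fixed value can equal $a+a'$ for at most one pair $\{a,a'\}$). Then $a+x,a+x',a'+x,a'+x'$ are four distinct elements of $\cB$ whose sum is $0$, contradicting linear independence. Thus $x$ is unique. The only step that requires real care is the case analysis for the intersection claim; everything else is bookkeeping built from linear independence of $\cB$.
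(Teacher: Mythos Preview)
The paper does not give its own proof of this lemma; it simply quotes it from \cite{RoRy18}. Your argument is correct and is essentially the standard one: translate so that $\mathbf 0\in\cA$, use independence of $\cB$ to attach to each nonzero $a\in\cA$ a unique pair $\varphi(a)\in[\cB]^2$, show these pairs are pairwise intersecting, and invoke the elementary fact that four or more distinct pairwise intersecting $2$-sets share a point.

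One small point deserves a cleaner justification. In the uniqueness paragraph you write that ``a fixed value can equal $a+a'$ for at most one pair $\{a,a'\}$''. This is not a general fact about subsets of ${}^{\ell}2$ (a two-dimensional subspace already gives a counterexample). It \emph{is} true in your situation, but only because you have already established that some translate $\cA+x$ lies inside the independent set $\cB$: if $\{a,a'\}\neq\{a'',a'''\}$ were two pairs in $\cA$ with the same sum, then $a+x,\,a'+x,\,a''+x,\,a'''+x$ would be four distinct elements of $\cB$ summing to $0$. So the step is fine, but the reason you give for it is not the right one; invoke the independence of $\cA+x\subseteq\cB$ explicitly. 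With that adjustment the proof is complete.
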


\begin{theorem}
\label{rankset}
Assume $0<\vare<\omega_1$ and let $2\le \iota<\omega$. Let
$S=(\omega,\bar{r},\bar{j},\bar{k})$ be a cute $\YZR(\vare)$--system. Then   
there is a sequence $\bar{T}=\langle T_m:m<\omega\rangle$ of trees
$T_m\subseteq {}^{\omega>}2$ without maximal nodes and a sequence
$\bar{c}=\langle c_m:m<\omega\rangle$ of integers such that  
\begin{enumerate}
\item $\bar{T}$ has $(\bar{c},S)$--controlled amalgamation property, and 
\item $\vare\leq \ndrk(\bar{T})\leq\omega\cdot (\vare+2)+2$ (the
  ordinal multiplication).
\end{enumerate}
\end{theorem}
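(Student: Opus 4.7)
The plan is to construct $\bar T$, $\bar c$ and the witnessing sequence $\bar\gb=\langle\gb_n:n<\omega\rangle$ of $(S,\iota,\bar T,\bar c)$--bricks simultaneously by recursion on $n$, arranging the bookkeeping so that every instance of the amalgamation demands (3) and (4) in Definition~\ref{defamal} is eventually addressed. At stage $n+1$ one pending amalgamation task $(w,u,\pi_0,\pi_1)$ (or $(w,u,\pi_0)$) on inputs from some previously constructed $w^{\gb_m}$ is resolved by producing $\gb_{n+1}$ with $\gb_n\Subset\gb_{n+1}$, a strictly larger $w^{\gb_{n+1}}$ carrying the quasi--embedded copies required, and a strictly greater level $n^{\gb_{n+1}}>n^{\gb_n}$. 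Cuteness of $S$ is invoked to realize the target quasi--embeddings into $\omega$; fresh $\eta$--vectors for the new elements of $w^{\gb_{n+1}}$ are chosen to extend the old ones linearly independently (over $\bbZ_2$), and the $g^{\gb_{n+1}}_i(a,b)$ are defined by copying from $\gb_n$ along the embeddings and choosing brand new indices $h^{\gb_{n+1}}_i(a,b)$ with $c_{h^{\gb_{n+1}}_i(a,b)}\le n^{\gb_{n+1}}$ for the new pairs. The trees $T_m$ and $\bar c$ are declared level by level as these commitments are made; freshness of indices and the requirement $c_{h_i(a,b)}\le n^{\gb_n}$ ensure that each $T_m$ is extended above all its current maximal nodes at later stages, hence has no maximal nodes in the limit.

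The principal technical obstacle is to preserve clause $(\boxplus)_7$ of Definition~\ref{brick}(1) throughout: whenever $\bmm(\ell_*,w_*)\in\cM^{\gb_{n+1}}$ has $\bar r(w_*)=0$ and distinguished element $a\in w_*$ with $|a\cap w_*|=\bar k(w_*)$, no $\sqsubseteq^*$--extension inside $\cM^{\gb_{n+1}}$ may split above $\eta_a\rest\ell_*$. The mechanism that makes this safe is property $(*)_5$ of a $\YZR(\vare)$--system: any would--be splitter would, via the quasi--embedding into $S$, correspond to some $b\in\omega\setminus w_*$ with $|b\cap w_*|=\bar k(w_*)$, $\bar\jmath((w_*\setminus\{a_{\bar k(w_*)}\})\cup\{b\})=\bar\jmath(w_*)$ and $\bar r(w_*\cup\{b\})=\bar r(w_*)$, contradicting $(*)_5$. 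The delicate case is the amalgam step where two images $\pi_0(\gb_n\rest w)$ and $\pi_1(\gb_n\rest w)$ coexist in $\gb_{n+1}$: here hypothesis~(b) of Definition~\ref{defamal}(3), together with disjointness $\pi_0[w\setminus u]\cap\pi_1[w\setminus u]=\emptyset$ and the rank/label preservation clauses of quasi--embeddings, is exactly what is needed to prevent cross--copy splittings from violating $(\boxplus)_7$.

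For the rank calculations, the lower bound $\vare\le\ndrk(\bar T)$ is obtained by embedding via cuteness a sufficiently rich portion of a rank--$\vare$ $\YZR$--system into $S$ and then reading off an $\bmm\in\Mtk$ of $\ndrk$--rank at least $\vare$ from the resulting brick data; this mirrors the argument of Proposition~\ref{3.11da} in the converse direction. The upper bound $\ndrk(\bar T)\le\omega\cdot(\vare+2)+2$ is proved by assigning to each $\bmm\in\Mtk$ a pair $(\delta,k)\in(\vare+2)\times\omega$ computed from the $\bar r$--value and the internal combinatorics of a minimal $w_*\subseteq w^{\gb_n}$ realising $\bmm$ (with the added constant absorbing edge cases when $|u_\bmm|<3$ or when no such canonical $w_*$ is available). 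A simultaneous induction on $(\delta,k)$ in lexicographic order shows $\ndrk(\bmm)\le\omega\cdot\delta+k$: any $\sqsubseteq$--extension either strictly drops $\delta$ (thanks to $(*)_5$ and $(\boxplus)_7$ forcing new nodes to lie in a strictly lower--rank $\YZR$--configuration) or leaves $\delta$ fixed while strictly decreasing $k$, and by Lemma~\ref{litlem} the latter can occur only finitely often before the linear-independence room runs out.

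The hard part is the simultaneous balancing: producing enough bricks to satisfy every amalgamation task (so that the $(\bar c,S)$--controlled amalgamation property holds) while keeping the $\ndrk$ bounded by the calibration above. This is navigated by taking each brick extension to be \emph{minimal}, adding no splittings beyond those forced by the particular task being resolved at that stage, so that the eventual $\ndrk$ is dominated by the underlying $\YZR(\vare)$--rank via $\bar r$, which itself is bounded by $\vare$. Once minimality is ensured, the rank calculation in the previous paragraph applies to the constructed $\bar T$ uniformly, yielding the stated bound.
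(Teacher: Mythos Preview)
Your overall architecture matches the paper's: a recursive construction of a $\curlyeqprec$--increasing sequence of finite objects with bookkeeping to resolve all amalgamation tasks, followed by a rank analysis tying $\ndrk$ to $\bar r$. The lower bound sketch and the identification of hypothesis~(b) in Definition~\ref{defamal}(3) as the safeguard against cross--copy violations of $(\boxplus)_7$ are both on target.

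However, your upper bound argument has a genuine gap. You assign to each $\bmm\in\Mtk$ a pair $(\delta,k)$ ``computed from \ldots a minimal $w_*\subseteq w^{\gb_n}$ realising $\bmm$'', but you never explain why such a $w_*$ exists for an \emph{arbitrary} $\bmm\in\Mtk$. A priori $\Mtk$ contains many $\bmm$ that were not put there by any brick, and bounding $\ndrk(\bar T)$ requires bounding $\ndrk(\bmm)$ for all of them. The paper handles this via a rigidity device you do not mention: each stage carries not only $\bar\eta$ but auxiliary vectors $\bar\rho=\langle\rho_{i,a,b}\rangle$ with $g_i(a,b)=\eta_a+\rho_{i,a,b}$, the concatenation $\bar\eta{}^\frown\bar\rho$ is required to be linearly independent, and---crucially---the trees $t_m$ are declared to contain \emph{only} these $g_i(a,b)$ at the top level (conditions $(\boxtimes)_5$--$(\boxtimes)_7$ in the paper). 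This forces every $\bmm\in\Mtk$ with $|u_\bmm|\ge 5$ to be, up to translation, essentially equal to some $\bn\in\cM^{p_K}$ (Claim~\ref{cl7}); Lemma~\ref{litlem} is used precisely here, to locate the translating vector, not to bound the length of a descent in your $k$--coordinate. Without this rigidity, nothing prevents the trees from admitting stray $\bmm$'s of uncontrolled rank, and your lexicographic induction has no foothold.

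A secondary point: your first explanation of $(\boxplus)_7$--preservation (``any would--be splitter \ldots contradicting $(*)_5$'') conflates the construction step with the rank step. In the construction, $(\boxplus)_7$ is preserved by the amalgamation hypothesis~(b) alone (as you correctly say afterwards); property $(*)_5$ enters only later, in the rank computation, where it forces $\bar r(w_1)<\bar r(w_1\setminus\{b\})$ when a genuine split occurs, driving the induction on $\bar r$.
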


\begin{proof}
We will mix the forcing construction of \cite{RoSh:1138} with the
arguments of \cite{RoRy18}, getting our result for all $\iota\geq 2$. Let
$\cP_\iota$ be the collection of all tuples  
\[p=\big(w^p,n^p,M^p,\bar{\eta}^p,\bar{t}^p,\bar{d}^p,\bar{h}^p, \bar{g}^p,   
\cM^p,\bar{\rho}^p\big)= \big(w,n,M,\bar{\eta},\bar{t}, \bar{d},\bar{h},
\bar{g}, \cM, \bar{\rho} \big)\]      
such that the following demands $(\boxtimes)_1$--$(\boxtimes)_7$ are
satisfied.  
\begin{enumerate}
\item[$(\boxtimes)_1$] $w\in [\omega]^{<\omega}$, $|w|\geq 3$,
  $0<n,M<\omega$.  
\item[$(\boxtimes)_2$] $\bar{t}= \langle t_m:m<M\rangle$ is
  $(M,\iota,n)$--usable, so in particular $\emptyset\neq t_m\subseteq
  {}^{n\geq} 2$ (for $m<M$) is a tree in which all terminal branches are of
  length $n$.  
\item[$(\boxtimes)_3$] $\bar{d}=\langle d_m:m<M\rangle$, where $0<d_m\leq n$
for $m<M$. 
\item[$(\boxtimes)_4$] $\gb(p)=(w^p,n^p,\bar{\eta}^p, \bar{h}^p, \bar{g}^p,
  \cM^p)$ is an $(S,\iota,\bar{t},\bar{d})$--brick (cf Definition
  \ref{brick}(2) and Remark \ref{rembri}(3)).
\item[$(\boxtimes)_5$] $\bar{\rho}=\langle \rho_{i,a,b}: i<\iota,\ a,b\in
  w,\ a<b\rangle\subseteq {}^n2$ and 
\[g_i(a ,b )=\eta_a+\rho_{i,a,b}\mbox{ and  } g_i(b,a)=\eta_b+\rho_{i,a,b}\]
whenever $a<b$ are from $w$ and $i<\iota$.
\item[$(\boxtimes)_6$] the list
\[\bar{\eta}\conc\bar{\rho}=\langle \eta_a:a\in w\rangle\conc \langle
  \rho_{i,a ,b}: i<\iota,\ a ,b\in  w,\ a<b\rangle\]
is a list of linearly independent vectors (in $({}^n 2,+,\cdot)$  over 
$(2,+_2,\cdot_2)$); in particular they are pairwise distinct,     
\item[$(\boxtimes)_7$] if $m<M$ then 
\[t_m\cap {}^n 2\subseteq \{g_i(a ,b ):(a ,b )\in w^{\langle 
    2\rangle}\ \wedge\ i<\iota\},\] 
and $t_m\cap t_{m'}\cap {}^n 2=\emptyset$ whenever $m<m'<M$. 
\end{enumerate}
For $p,q\in \cP_\iota$ we declare that $p\curlyeqprec q$\quad if and only if 
\begin{itemize}
\item $n^p\leq n^q$, $M^p\leq M^q$, and $t^p_m=t^q_m\cap {}^{n^p\geq} 2$ and
  $d^p_m=d^q_m$ for all $m<M^p$, and     
\item $\gb(p)\Subset \gb(q)$. 
\end{itemize}
It is straightforward to verify that $(\cP_\iota,\curlyeqprec)$ is a
nonempty partial order.   
\medskip

\begin{claim}
\label{cl8}
Assume $p=\big(w,n,M,\bar{\eta},\bar{t}, \bar{d},\bar{h},
\bar{g}, \cM, \bar{\rho} \big)\in \cP_\iota$. Suppose that 
$\nu^0_i,\nu^1_i\in\bigcup\limits_{m<M} (t_m\cap  {}^n2)$ (for
$i<\iota$) are such that    
  \begin{enumerate}
  \item[(a)] there are no repetitions in $\langle \nu^0_i,\nu^1_i: 
    i<\iota\rangle$, and 
  \item[(b)] $\nu^0_i+\nu^1_i=\nu^0_j+\nu^1_j$ for $i<j<\iota$.
  \end{enumerate}
  Then
  \begin{enumerate}
  \item[(A)] if $\iota\geq 3$ then for some $ a , b \in w$ we have  
\[\big\{\{\nu^0_i,\nu^1_i\}:i<\iota\big\}=\big\{\{g_i(a,b),  g_i(b,a)\}:
  i<\iota\big\}.\]
\item[(B)] If $\iota=2$ then for some $a,b\in w$ we have   
\[\big\{\nu^0_0,\nu^1_0,\nu^0_1,\nu^1_1\big\}=\big\{g_0(a,b),  g_0(b,a), 
  g_1(a,b),  g_1(b,a)\big\}.\] 
  \end{enumerate}
\end{claim}

\begin{proof}[Proof of the Claim]
For $a>b$ from $w$ and $i<\iota$ we will write $\rho_{i,a,b}$ for
$\rho_{i,b,a}$. With this notation, all elements of
$\bigcup\limits_{m<M}(t_m\cap  {}^n2)$ are of the form
$\eta_a+\rho_{i,a,b}$ for some $i<\iota$ and $(a,b)\in w^{\langle
  2\rangle}$. 

Let $i<j<\iota$ and let $a,b,c,d, a',b',c',d', i_0,i_1, i_0',i_1'$ be such
that $\nu^0_i=\eta_a+\rho_{i_0,a,b}$, $\nu^1_i=\eta_c+ \rho_{i_1,c,d}$, 
$\nu^0_j=\eta_{a'}+\rho_{i_0',a',b'}$, $\nu^1_j=\eta_{c'}+
\rho_{i_1',c',d'}$. Since $\nu^0_i+\nu^1_i=\nu^0_j
+\nu^1_j$ we have then
\[\eta_a+\rho_{i_0,a,b}+\eta_c+\rho_{i_1,c,d}=
  \eta_{a'}+\rho_{i_0',a',b'}+\eta_{c'}+\rho_{i_1',c',d'}.\]
If $a\neq c$ then it follows from $(\boxtimes)_6$ that $\{a,c\}=\{a',c'\}$
(so either $a=a'$, $c=c'$ or $a=c'$, $c=a'$), and    
\[\rho_{i_0,a,b}+\rho_{i_1,c,d}=\rho_{i_0',a',b'}+\rho_{i_1',c',d'}.\]
Then, still assuming $a\neq c$, we consider relationships among $\rho$'s 
above getting four possible subcases.\\
If $\rho_{i_0,a,b}=\rho_{i_1,c,d}$ then $a\in \{a,b\}=\{c,d\}\ni c$ and also
$\rho_{i_0',a',b'}= \rho_{i_1',c',d'}$ so $a'\in\{a',b'\} =\{c',d'\}\ni
c'$. Moreover, $i_0=i_1$ and $i_0'=i_1'$. Thus, remembering that
$\{a,c\}=\{a',c'\}$,  we get in this case: 
\begin{enumerate}
\item[$\big({\boldsymbol \rightrightarrows}\big)_{i,j}^{a,c}$] if $a=a'$ and 
  $c=c'$,  then also $c=b=b'$, $a=d=d'$ and 
\[\begin{array}{ll}
\nu^0_i=\eta_a+\rho_{i_0,a,c},&\ \nu^1_i=\eta_c+\rho_{i_0,a,c},\\ 
  \nu^0_j=\eta_a+\rho_{i_0',a,c}, &\ \nu^1_j=\eta_c+\rho_{i_0',a,c},
\end{array}\] 
\item[\ \ ]  if $a=c'$
  and $c=a'$, then also $a=d=b'$ and $c=d'=b$ and  
\[\begin{array}{ll}
\nu^0_i=\eta_a+\rho_{i_0,a,c},&\ \nu^1_i=\eta_c+\rho_{i_0,a,c},\\
  \nu^0_j=\eta_c+\rho_{i_0',a,c},& \ \nu^1_j=\eta_a+\rho_{i_0',a,c}.
\end{array}\] 
\end{enumerate}
If $\rho_{i_0,a,b}\neq \rho_{i_1,c,d}$ then  $\{\rho_{i_0,a,b},
\rho_{i_1,c,d}\}= \{\rho_{i_0',a',b'}, \rho_{i_1',c',d'}\}$ and analysis as
above provides that there are only two possible cases. 
\begin{enumerate}
\item[$\big({\boldsymbol \downdownarrows}\big)_{i,j}^{a,c}$] If
  $\rho_{i_0,a,b}=\rho_{i_0',a',b'}$ then we must also have $a=c'$ and
\[\begin{array}{ll}
\nu^0_i=\eta_a+\rho_{i_0,a,c},&\ \nu^1_i=\eta_c+\rho_{i_1,a,c},\\ 
  \nu^0_j=\eta_c+\rho_{i_0,a,c}, &\ \nu^1_j=\eta_a+\rho_{i_1,a,c}.
\end{array}\] 
\item[$\big({\boldsymbol \searrow}\hspace{-12pt}
{\boldsymbol
    \swarrow}\big)_{i,j}^{a,c}$] If $\rho_{i_0,a,b} = \rho_{i_1',c',d'}$
  then we must also have $a=a'$ and  
\[\begin{array}{ll}
\nu^0_i=\eta_a+\rho_{i_0,a,c},&\ \nu^1_i=\eta_c+\rho_{i_1,a,c},\\
  \nu^0_j=\eta_a+\rho_{i_1,a,c},& \ \nu^1_j=\eta_c+\rho_{i_0,a,c}.
\end{array}\] 
\end{enumerate}
Now about what happens if $a=c$ (and $a'=c'$). We easily eliminate
the possibility of $\rho_{i_0,a,b}=\rho_{i_1,c,d}$. Considering all other
options we get the following.
\begin{enumerate}
\item[$\big({\boldsymbol \downdownarrows}\big)_{i,j}^{a,a'}$] If
  $\rho_{i_0,a,b}=\rho_{i_0',a',b'}$ then 
\[\begin{array}{ll}
\nu^0_i=\eta_a+\rho_{i_0,a,a'},&\ \nu^1_i=\eta_a+\rho_{i_1,a,a'},\\ 
  \nu^0_j=\eta_{a'}+\rho_{i_0,a,a'}, &\ \nu^1_j=\eta_{a'}+\rho_{i_1,a,a'}.
\end{array}\] 
\item[$\big({\boldsymbol \searrow}\hspace{-12pt}
{\boldsymbol
    \swarrow}\big)_{i,j}^{a,a'}$] If $\rho_{i_0,a,b} = \rho_{i_1',c',d'}$ then 
\[\begin{array}{ll}
\nu^0_i=\eta_a+\rho_{i_0,a,a'},&\ \nu^1_i=\eta_a+\rho_{i_1,a,a'},\\
  \nu^0_j=\eta_{a'}+\rho_{i_1,a,a'},& \ \nu^1_j=\eta_{a'}+\rho_{i_0,a,a'}.
\end{array}\] 
\end{enumerate}
Thus we see that, for each $i<j<\iota$ we have
\begin{enumerate}
\item[$(\heartsuit)_{i,j}$] there are $a<b$ from $w$ and $i_0,i_1<\iota$ such
  that
  \[\big\{\nu^0_i,\nu^1_i,\nu^0_j,\nu^1_j\big\}= \big\{g_{i_0}(a,b),
    g_{i_0}(b,a), g_{i_1}(a,b), g_{i_1}(b,a)\big\}.\] 
\end{enumerate}
This immediately gives us the assertion of (B). If $\iota\geq 3$ then
considering triples $i<j<k<\iota$ and $(\heartsuit)_{i,j} +(\heartsuit)_{i,k}
+(\heartsuit)_{k,j}$ we get  from the linear independence declared in
$(\boxtimes)_6$ that 
\begin{enumerate}
\item[$(\heartsuit)^+$] for some $a<b$ from $w$, for every $i<\iota$ we have 
\[\nu^0_i,\nu^1_i\in \big\{g_j(a,b), g_j(b,a): j<\iota\big\}=\big\{
  \eta_a+\rho_{j,a,b},\eta_b+\rho_{j,a,b}:j<\iota \big\}.\]
\end{enumerate}
By the same linear independence, 
\begin{itemize}
\item the sum $g_{i_0}(a,b)+g_{j_0}(b,a)$ (where $i_0\neq j_0$) can be equal
  to only one  other sum of two elements of $\big\{g_j(a,b), g_j(b,a):
  j<\iota\big\}$, namely $g_{i_0}(b,a)+g_{j_0}(a,b)$,
\item the sum $g_{i_0}(a,b)+g_{j_0}(a,b)$ (for $i_0\neq j_0$) can be equal
  to only one  other sum of two elements of $\big\{g_j(a,b), g_j(b,a):
  j<\iota\big\}$, namely $g_{i_0}(b,a)+g_{j_0}(b,a)$.
\end{itemize}
Therefore, if $\iota\geq 3$ then for $a,b$ given by $(\heartsuit)^+$,
\[\big\{\{\nu^0_i,\nu^1_i\}:i<\iota\big\} =\big\{\{g_j(a,b), g_j(b,a)\}: j<
  \iota \big\}\]
and the assertion of (A) follows.
\end{proof} 

\begin{claim}
\label{cl7}
Let $p=\big(w^p,n^p,M^p,\bar{\eta}^p,\bar{t}^p, \bar{d}^p, \bar{h}^p,
\bar{g}^p, \cM^p, \bar{\rho}^p \big)\in\cP_\iota$. Assume that $\bmm\in
\fMtp$ is such that   
\begin{enumerate}
\item[(i)] $|u_\bmm|\geq 5$, and 
\item[(ii)] $d_{h^\bmm_i(\eta,\nu)}\leq \ell_\bmm$ for all $(\eta,\nu)\in 
  \big(u_\bmm\big)^{\langle 2\rangle}$ and $i<\iota$.
\end{enumerate}
Then for some $\rho\in {}^{\ell_\bmm} 2$ and $\bn\in\cM^p$ we have 
$(\bmm+\rho)\doteqdot \bn$.    
\end{claim}

\begin{proof}[Proof of the Claim]
Suppose $\bmm\in\fMtp$ satisfies (i) and (ii). By Definitions
\ref{fmtkDef}(2) +  \ref{mtkDef}(f) there is $\bmm^+\in\fMtp$ such that 
$\bmm\sqsubseteq \bmm^+$, $\ell_{\bmm^+}=n^p$ and
$|u_{\bmm^+}|=|u_\bmm|$. If $(\eta,\nu)\in (u_{\bmm^+})^{\langle 2\rangle}$
then  for all $i<\iota$:  
\[g_i^{\bmm^+}(\nu,\eta), g_i^{\bmm^+}(\eta,\nu) \in \bigcup_{m<M^p}
  t^p_m\cap {}^{n^p} 2\quad \mbox{ and }\quad g_i^{\bmm^+}(\nu,\eta)+ 
  g_i^{\bmm^+}(\eta,\nu) = \nu+\eta.\]
Let us consider {\bf the case when $\iota\geq 3$}. Then by Claim
\ref{cl8}(A), for every $(\eta,\nu)\in (u_{\bmm^+})^{\langle 2\rangle}$
there are $a<b$ from $w^p$ such that   
\begin{enumerate}
\item[$(*)^{\nu,\eta}_{ a , b }$] \qquad $\big\{\{g_i^{\bmm^+}(\nu,\eta),
  g_i^{\bmm^+}(\eta,\nu)\} : i<\iota\big\}=\big\{\{g_i^p( a , b ),
  g_i^p( b , a )\} : i<\iota\big\}$.
\end{enumerate}
In particular, $\eta+\nu=\eta_ a^p +\eta_ b^p $. Using Lemma
\ref{litlem} we may conclude that for some $x\in {}^{n^p} 2$ we have
$u_{\bmm^+}+x\subseteq \{\eta_c^p:c\in w^p\}$. The linear
independence of $\eta_c^p$'s implies that if $\eta,\nu\in
(u_{\bmm^+})^{\langle 2\rangle}$ and $(*)^{\nu,\eta}_{ a , b }$ holds,
then $\{\eta+x,\nu+x\}=\{\eta_ a^p ,\eta_ b^p \}$. By $(\boxplus)_7$,
$g^p_i( a , b )$ ($g^{\bmm^+}_i(\nu,\eta)$, respectively) determines
$h^p_i( a , b )$  ($h^{\bmm^+}_i(\nu,\eta)$, respectively). So easily   
$(\bmm^++x)\doteqdot \bn^+$ for some $\bn^+\in\cM^p$ and
\[\bmm=\bmm^+\rest \ell_\bmm\doteqdot (\bn^+\rest \ell_\bmm) + x\rest  
  \ell_\bmm\] 
and $\bn^+\rest \ell_\bmm\in\cM^p$ (remember assumption (ii) for $\bmm$).
\medskip

Now, consider {\bf the case when $\iota=2$}. By Claim \ref{cl8}(B), we know
that for every $(\eta,\nu)\in (u_{\bmm^+})^{\langle 2\rangle}$ 
\begin{enumerate}
\item[$(**)^{\nu,\eta}$]  \qquad there are $ a < b $ from $w^p$ such that  
\[\big\{g_0^{\bmm^+}\!(\nu,\eta),  g_0^{\bmm^+}\!(\eta,\nu),
  g_1^{\bmm^+}\!(\nu,\eta), g_1^{\bmm^+}\!(\eta,\nu) \big\}= \big\{g_0^p(a,b),
  g_0^p(b,a), g_1^p(a,b), g_1^p(b,a)\big\}.\] 
\end{enumerate}
Define functions $\chi:[u_{\bmm^+}]^{2}\longrightarrow 2$ and
$\Theta:[u_{\bmm^+}]^{2}\longrightarrow [w^p]^2$ as follows. Suppose
$\{\eta,\nu\}\in u_{\bmm^+}$. 
\begin{itemize}
\item If $\eta+\nu=\eta_a^p+\eta_b^p$, $a,b\in w^p$, then $\chi(\{\eta,\nu\})=1$
  and $\Theta(\{\eta,\nu\})=\{a,b\}$.
\item If $\eta+\nu=\eta_a^p+\eta_b^p+\rho^p_{0,a,b}+\rho^p_{1,a,b}$, $a,b\in
  w^p$, then $\chi(\{\eta,\nu\})=0$ and $\Theta(\{\eta,\nu\})=\{a,b\}$.
\item If $\eta+\nu=\rho_{0,a,b}^p+\rho_{1,a,b}^p$, $a,b\in w^p$, then
  $\chi(\{\eta,\nu\})=0$ and $\Theta(\{\eta,\nu\})=\{a,b\}$.  
\end{itemize}
It follows from $(**)^{\nu,\eta}$ and the linear independence of
$\bar{\eta}^p\conc \bar{\rho}^p$ (see $(\boxtimes)_6$) that exactly one of the
cases described above holds for $\eta+\nu$.

\begin{enumerate}
\item[$(***)_1$] If $\eta_0,\eta_1,\eta_2\in u_{\bmm^+}$ are pairwise distinct
  and $\chi(\{\eta_0,\eta_1\})=\chi(\{\eta_1,\eta_2\})=1$, then
  $\Theta(\{\eta_0,\eta_1\})\neq \Theta(\{\eta_1,\eta_2\})$ and
  $\chi(\{\eta_0,\eta_2\})=1$.
\end{enumerate}
Why? Assume $\chi(\{\eta_0,\eta_1\})=\chi(\{\eta_1,\eta_2\})=1$. Then both
$\eta_0+\eta_1$ and $\eta_1+\eta_2$ are sums of two elements of
$\{\eta^p_c:c\in w^p\}$. Hence $\eta_0+\eta_2$ is a sum of some elements of 
$\{\eta^p_c:c\in w^p\}$ and therefore $\chi(\{\eta_0,\eta_2\})\neq 0$ (as the
terms of $\bar{\eta}^p\conc\bar{\rho}^p$ are linearly independent). Now, if we
had $\Theta(\{\eta_0,\eta_1\})=\Theta(\{\eta_1,\eta_2\})=\{a,b\}$, then
\[\eta_0+\eta_1=\eta^p_a+\eta^p_b=\eta_1+\eta_2\]
and hence $\eta_0=\eta_2$, a contradiction. 
\begin{enumerate}
\item[$(***)_2$] If $\eta_0,\eta_1,\eta_2\in u_{\bmm^+}$ are pairwise distinct
  and $\chi(\{\eta_0,\eta_1\})=\chi(\{\eta_0,\eta_2\})=0$, then
  $\Theta(\{\eta_0,\eta_1\})=\Theta(\{\eta_0,\eta_2\})=\Theta(\{\eta_1,\eta_2\})$
  and  $\chi(\{\eta_1,\eta_2\})=1$.
\end{enumerate}
Why? First note that if we had $\Theta(\{\eta_0,\eta_1\})\neq
\Theta(\{\eta_0,\eta_2\})$ then $\eta_1+\eta_2=(\eta_0+\eta_1)+
(\eta_0+\eta_2)$ would be a sum of four elements of $\{\rho^p_{i,a,b}: i<2,\
a<b\mbox{ from }w\}$ and possiby some elements of $\{\eta^p_c: c\in
w^p\}$. This is clearly impossible and thus $\Theta(\{\eta_0,\eta_1\})=
\Theta(\{\eta_0,\eta_2\})$, say it is $\{a,b\}$. Since $\eta_0+\eta_1\neq
\eta_0+\eta_2$ we immediately conlude that one of them is
$\eta^p_a+\eta^p_b+\rho^p_{0,a,b} +\rho^p_{1,a,b}$ and the other is  
$\rho^p_{0,a,b} +\rho^p_{1,a,b}$. Consequently,
\[\eta_1+\eta_2=(\eta_0+\eta_1)+(\eta_0+\eta_2)=\eta^p_a+\eta^p_b.\] 
\begin{enumerate}
\item[$(***)_3$] If $\eta_0,\eta_1,\eta_2,\eta_3\in u_{\bmm^+}$ are pairwise
  distinct and $\chi(\{\eta_0,\eta_1\})=\chi(\{\eta_0,\eta_2\})=0$, then
$\chi(\{\eta_0,\eta_3\})=1$.
\end{enumerate}
Why? Assume towards contradiction that $\chi(\{\eta_0,\eta_3\})=0$. It follows
from $(***)_2$ that 
\[\Theta(\{\eta_1,\eta_2\})=\Theta(\{\eta_0,\eta_1\})=\Theta(\{\eta_0,
  \eta_2\})=   \Theta(\{\eta_0,\eta_3\})= \Theta(\{\eta_2,\eta_3\})\]
and $\chi(\{\eta_1,\eta_2\})=\chi(\{\eta_2,\eta_3\})=1$. Thus, letting
$\{a,b\} =\Theta(\{\eta_2,\eta_3\})$, we have $\eta_2+\eta_3=\eta^p_a+\eta^p_b
= \eta_1+\eta_2$, a contradiction.
\begin{enumerate}
\item[$(***)_4$] $\chi(\{\eta_0,\eta_1\})=1$  for all distinct
  $\eta_0,\eta_1\in u_{\bmm^+}$.
\end{enumerate}
Why? Suppose towards contradiction that $\chi(\{\eta_0,\eta_1\})=0$. It
follows from $(***)_3$ that there is at most one $\eta\in
u_{\bmm^+}\setminus \{\eta_0,\eta_1\}$ such that $\chi(\{\eta_0,\eta\})=0$, 
and there is at most one $\eta\in u_{\bmm^+}\setminus \{\eta_0,\eta_1\}$ such
that $\chi(\{\eta_1,\eta\})=0$. Since $|u_{\bmm^+}|\geq 5$ we may choose
$\eta_2\in u_{\bmm^+}\setminus \{\eta_0,\eta_1\}$ such that
$\chi(\{\eta_0,\eta_2\})=\chi(\{\eta_1,\eta_2\})=1$. Then, however, we get an
immediate contradiction with $(***)_1$.

Consequently,
\[\big(\forall\eta,\nu\in u_{\bmm^+}\big)\big(\exists a,b\in w^p\big)
  \big(\eta+\nu=\eta^p_a+\eta^p_b\big),\]
and we may get our desired conclusion similarly to the case of $\iota\geq
3$. (Notice the difference in Definition \ref{almostDef} of $\doteqdot$ for
case $\iota=2$.) 
\end{proof}

\begin{claim}
  \label{cl5}
  Assume that 
\begin{enumerate}
\item[(a)] $p\in\cP_\iota$ and $u\subseteq w\subseteq w^p$, $|w|\geq 3$, and $w^*\in
  \big[\omega\setminus w^p\big]^{<\omega}$,
\item[(b)] $\bar{k}\big(v\cup\{d\}\big)\neq |d\cap 
  v|$ whenever $v\subseteq u$ and $d\in w\setminus u$ and 
  $\bar{r}\big(v\cup\{d\}\big)=0$, 
\item[(c)]   $\pi_0,\pi_1:w \longrightarrow w^*$ are quasi--embeddings 
  (into $S$) such that $\pi_0( a )=\pi_1( a )$  for $ a \in u$ and
  $\pi_0[w\setminus u]\cap \pi_1[w\setminus u]=\emptyset$. 
\end{enumerate}
Then there is $q\in\cP_\iota$ such that $p \curlyeqprec q$, $w^q=w^*\cup w^p$ and  
\[\pi_0(\gb(p)\rest w)\Subset \gb(q)\rest
  (\pi_0[w])\quad\mbox{ and }\quad
  \pi_1(\gb(p)\rest w)\Subset \gb(q)\rest (\pi_1[w]).\] 
\end{claim}

\begin{proof}[Proof of the Claim]
  Let $N=|w^p|+|w^*|$, $K=\big(|w^p|+|w^*|\big)^2$, and
  \[K^*= \Big|\big(w^p\cup w^*\big)^{\langle 2\rangle}\setminus \Big(\big(
    w^p\big)^{\langle 2\rangle}\cup\big(\pi_0[w] \big)^{\langle 2\rangle}\cup
    \big(\pi_1[w]\big)^{\langle 2\rangle}\Big)\Big|.\]
Fix {\bf injections }
\[\psi_0:w^p\cup w^*\longrightarrow [n^p,n^p+N),\quad
\psi_1: \iota\times\big(w^p\cup w^*\big)^{\langle 2\rangle} \longrightarrow 
[n^p+N,n^p+N+\iota\cdot K)\]
and
\[\varphi: \big(w^p\cup w^*\big)^{\langle 2\rangle}\setminus \Big(\big(
    w^p\big)^{\langle 2\rangle}\cup\big(\pi_0[w] \big)^{\langle 2\rangle}\cup
    \big(\pi_1[w]\big)^{\langle 2\rangle}\Big) \longrightarrow
    [M^p,M^p+K^*).\] 
Define:\\
$w^q=w^p\cup w^*$, $n^q=n^p+N+\iota\cdot K$, $M^q=M^p+K^*$,\\
$\bar{\eta}^q=\langle \eta^q_ a : a \in w^q\rangle$ and
\begin{itemize}
\item  if $ a \in w^p$ then $\eta^q_a\rest n^p=\eta^p_a$,
$\eta^q_a(\psi_0(a))=1$ and $\eta^q_a(\ell)=0$ for all other
  $\ell\in [n^p,n^q)$,
\item if $j<2$, $ a =\pi_j(c)\in \pi_j[w]$,  $c\in w$,  then
$\eta^q_a\rest n^p=\eta^p_c$,  $\eta^q_a(\psi_0(a))=1$
and $\eta^q_a(\ell)=0$ for all other $\ell\in [n^p,n^q)$ (note that
  by assumption (c), if $ a =\pi_j(c)$, $c\in u$, then also 
  $ a =\pi_{1-j}(c)$, so there is no ambiguity here),
\item if $ a \in w^*\setminus (\pi_0[w]\cup \pi_1[w])$, then
$\eta^q_a(\psi_0(a))=1$ and $\eta^q_a(\ell)=0$ for all other
$\ell<n^q$,
\end{itemize}
$\bar{h}^q=\langle h^q_i:i<\iota\rangle$ and for $i<\iota$ and
$(a,b)\in (w^q)^{\langle 2\rangle}$:
\begin{itemize}
\item if $(a,b)\in (w^p)^{\langle 2\rangle}$, then
   $h^q_i(a,b)= h^p_i(a,b)$,
\item if $j<2$, $(a,b)\in \big(\pi_j[w]\big)^{\langle 2\rangle}$,
  and $ a =\pi_j(c)$, $ b =\pi_j(d)$ where $c,d\in  
 w$, then  $h^q_i(a,b)= h^p_i(c,d)$,   
\item if $(a,b)\in \big( w^q\big)^{\langle 2\rangle}\setminus
\Big(\big( w^p\big)^{\langle 2\rangle}\cup\big(\pi_0[w] \big)^{\langle
2\rangle}\cup \big(\pi_1[w]\big) ^{\langle 2\rangle}\Big)$, then 
$h^q_i(a,b)=\varphi(a,b)$,
\end{itemize}
$\bar{\rho}^q=\langle \rho^q_{i,a,b}:i<\iota,\ a,b\in w^q,\ a<b\rangle$ and
for $i<\iota$ and $a<b$ from $w^q$: 
\begin{itemize}
\item if $a,b\in w^p$ then $\rho^q_{i,a,b}\in {}^{n^q} 2$ is such that
  $\rho^p_{i,a,b}\trianglelefteq \rho^q_{i,a,b}$, $\rho^q_{i,a,b}
  (\psi_1(i,a,b))=1$ and $\rho^q_{i,a,b}(\ell)=0$ for all other $\ell<n^q$,
\item  if $j<2$, $(a,b)\in \big(\pi_j[w]\big)^{\langle 2\rangle}$,
  and  $a =\pi_j(c)$, $ b =\pi_j(d)$ where $c,d\in  w$, then
  $\rho^q_{i,a,b}\in {}^{n^q}2$ is such that $\rho^p_{i,c,d} \trianglelefteq
  \rho^q_{i,a,b}$, $\rho^q_{i,a,b} (\psi_1(i,a,b))=1$ and $\rho^q_{i,a,b}(\ell)=0$ for
  all other $\ell<n^q$, 
\item if   $(a,b)\in \big( w^q\big)^{\langle 2\rangle}$ is not
  covered by the cases above, then $\rho^q_{i,a,b} (\psi_1(i,a,b))=1$ and
  $\rho^q_{i,a,b}(\ell)=0$ for all other $\ell<n^q$, 
\end{itemize}
$\bar{g}^q$ is defined by condition $(\boxtimes)_5$ and $\cM^q$
is defined by Definition \ref{brick}$(\boxplus)_5$,
$\bar{t}^q=\langle t^q_m:m<M^q\rangle$ is such that
\[t^q_m=\{g^q_i(a,b)\rest n: n\leq n^q,\ i<\iota,\ (a,b)\in
  \big(w^q\big)^{\langle 2\rangle},\ h^q_i(a,b)=m\},\]
$\bar{d}^q=\langle d^q_m:m<M^q\rangle$, where $d^q_m=d^p_m$ if
$m<M^p$ and $d^q_m=n^q$ if $M^p\leq m<M^q$.
\bigskip

The verification that $q=\big(w^q,n^q,M^q,\bar{\eta}^q,\bar{t}^q, \bar{d}^q,
\bar{h}^q, \bar{g}^q, \cM^q\big)\in\cP_\iota$ is quite straightforward. The only
non trivial part is checking conditions $(\boxplus)_6$ and $(\boxplus)_7$ of
Definition \ref{brick}.  For $(\boxplus)_6$, assume that $\bmm^{\gb(q)}(\ell,w_0),
\bmm^{\gb(q)}(\ell,w_1)\in \cM^q$ are such that $\bmm^{\gb(q)}(\ell,w_0)
\doteqdot\bmm^{\gb(q)}(\ell,w_1)+\rho$. If for some $( a , b )\in
(w_0)^{\langle 2\rangle}$ and $i<\iota$ we have $h^q_i( a , b )\geq
M^p$, then $n^q\geq \ell\geq d^q_{h^*_i(a,b)}=n^q$ and $\{\eta^q_a: a\in
w_0\}= \{\eta^q_a:a\in w_1\}$ (and $|w_0|\geq 3$). Hence by the linear
independence of $\bar{\eta}^q$ we may conlude that $\rho=0$ and
$w_0=w_1$. Suppose now that $h^q_i( a , b )<M^p$ for all $( a , b )\in
(w_0)^{\langle   2\rangle}$. Then, both for $j=0$ and $j=1$, neccessarily
either we have $w_j\subseteq w^p$ or $w_j\subseteq \pi_0[w]$ or
$w_j\subseteq \pi_1[w]$. If $w_0\cup w_1\subseteq w^p$ then we may set
$\ell^*=\min(n^p,\ell)$ and apply condition $(\boxplus)_6$ for $p$ and
$\bmm^{\gb(p)}(\ell^*,w_0)$, $\bmm^{\gb(p)}(\ell^*,w_1)$. If $w_0\subseteq
w^p$ and $w_1\subseteq \pi_j[w]$,  then we first note that for each $n\in
[n^p,n^q)$ there is at most one $a\in w^q$ such that
$\eta^q_a(n)=1$. Therefore, in the current situation, $\eta^q_a(n)=0$
whenever $n^p\leq n\leq \ell$, $a\in w_0\cup w_1$. So we set
$\ell^*=\min(n^p,\ell)$ and again apply condition $(\boxplus)_6$ for $p$ and
$\bmm^{\gb(p)}(\ell^*,w_0)$, $\bmm^{\gb(p)}(\ell^*,
\pi_j^{-1}\big[w_1\big])$. Similarly in other cases.       

To show $(\boxplus)_7$ of Definition \ref{brick}, suppose towards
contradiction that $\bmm^{\gb(q)}(\ell_0,w_0)$ and $
\bmm^{\gb(q)}(\ell_1,w_1)$ from $\cM^q$ and $ a \in w_0$ are such that
$\bmm^{\gb(q)}(\ell_0,w_0) \sqsubseteq^* \bmm^{\gb(q)}(\ell_1,w_1)$,
$\bar{r}(w_0)=0$, $| a \cap  w_0|=\bar{k}(w_0)$ and $1<|\{ b \in
w_1:(\eta^q_ a \rest \ell_0)\vtl \eta^q_ b \}|$. We may assume that $4\leq
|w_0|+1=|w_1|$ and $\{ b _0, b _1\}=\{ b \in w_1:(\eta^q_ a \rest
\ell_0)\vtl \eta^q_ b \}$. Necessarily, $\ell_0<\ell_1\leq n^q$ and
therefore $h^q_i( a , b )<M^p$ for all $( a , b )\in (w_0)^{\langle 
  2\rangle}$ and $i<\iota$. Consequently, either $w_0\subseteq w^p$ or
$w_0\subseteq \pi_0[w]$ or $w_0\subseteq \pi_1[w]$. If also $w_1\subseteq
w^p$ or $w_1\subseteq \pi_0[w]$ or $w_1\subseteq \pi_1[w]$, then for any
distinct $a,b\in w_1$ we have $\eta_a^q\rest n^p\neq \eta^q_b\rest n^p$ and
we may assume $\ell_0<\ell_1\leq n^p$. Then we may use $\pi_0^{-1},
\pi_1^{-1}$, as appropriate, to copy (if needed) both $w_0$ and $w_1$ to
$w^p$ and get easy contradiction with $(\boxplus)_7$ for $p$. 

So suppose  otherwise, that is neither of the inclusions 
\[w_1\subseteq w^p,\quad  w_1\subseteq \pi_0[w],\quad w_1\subseteq
  \pi_1[w]\] 
holds true. We may replace $w_0$ with $\pi_j^{-1}[w_0]$ and $\ell_0$ with
$\min(n^p,\ell_0)$, so without loss of generality $w_0\subseteq w^p$ and
$\ell_0\leq n^p$. Now, for all  $ b \in w_1\setminus\{ b _0, b _1\} 
\neq\emptyset$ we have 
\[h^q_0( b , b _0), h^q_0( b , b _1)<M^p\quad \mbox{ while }\quad
  h^q_0( b _0, b _1)\geq M^p.\]
This is only possible if $ b _0\in \pi_j[w]\setminus \pi_{1-j}[w]$ and
$ b _1\in \pi_{1-j}[w]\setminus \pi_j[w]$ (say $ b _j\in
\pi_j[w]\setminus \pi_{1-j}[w]$) and $w_1\setminus
\{ b _0, b _1\}\subseteq \pi_0[w]\cap \pi_1[w]=\pi_0[u]=\pi_1[u]$. But
then letting $v=\pi_0^{-1}\big[ w_1\setminus\{ b _0, b _1\}\big]$ and
considering $\bmm^{\gb(p)}(\ell_0, v\cup\{\pi^{-1}_0( b _0)\})$ we get (by
\ref{brick}$(\boxplus)_6$ for $p$) that the order isomorphism from $w_0$ onto
$v\cup\{\pi^{-1}_0( b _0)\}$ is a quasi--embedding mapping $ a $ to
$\pi_0^{-1}( b _0)\in w\setminus u$. This immediately contradicts
assumption (b) of the Claim (applied to $v$ and $d=\pi_0^{-1}(b _0)$).

Thus $q\in\cP_\iota$ indeed. It should be clear that $p\curlyeqprec q$ and $q$ is
as required.
\end{proof}
\medskip

\begin{claim}
  \label{cl14}
  Let $p\in\cP_\iota$, $k<\omega$. Then there is $q\in \cP_\iota$ such that
  $p\curlyeqprec q$, $k\in w^q$, $n^q>k$ and $M^q>k$.
\end{claim}

\begin{proof}[Proof of the Claim]
  By the cuteness of $S$, we may find a quasi-embedding
  $\varphi:w^p\longrightarrow \omega$ such that $\max(w^p)<
  \min(\rng(\varphi))$. Let $w^*\in [\omega\setminus w^p]^{<\omega}$ be such
  that $k\in w^p\cup w^*$, $\rng(\varphi)\subseteq w^*$ and
  $|w^*|>k$. Applying (the proof of) Claim \ref{cl5} to $p$, $u=w=w^p$,
  $w^*$ and $\pi_0=\pi_1=\varphi$ we will get $q\in\cP_\iota$ as there.
(Note that the assumption \ref{cl5}(b) is satisfied vacuously.) This $q$ has
the properties that $p\curlyeqprec q$, and $w^q=w^p\cup w^*$, and $M^q\geq
M^p+ |w^p|\cdot |w^*|>k$. Since $|w^q|>k$ we also have $n^q>k$ (remember
Observation \ref{obsbrick}(1)).
\end{proof}

\medskip

\begin{claim}
  \label{cl6}
 Assume that 
\begin{enumerate}
\item[(a)] $p\in\cP_\iota$ and $w\subseteq w^p$, $|w|\geq 3$, 
\item[(b)] $\pi_0:w\longrightarrow \omega$ is a quasi--embedding (into $S$)
and $\rng(\pi_0)\subseteq w^+\in [\omega]^{<\omega}$.   
\end{enumerate}
Then there are $q\in\cP_\iota$ and $\pi:w^+\longrightarrow w^q$ such that
\begin{itemize}
\item $p\curlyeqprec q$ and $\pi$ is a quasi--embedding, and 
\item $(\pi\circ\pi_0)(\gb(p)\rest w)\Subset \gb(q)\rest
  \big(\pi\big[\pi_0[w] \big]\big)$. 
\end{itemize}
\end{claim}

\begin{proof}[Proof of the Claim]
Since $S$ is cute we may find a quasi embedding $\pi:w^+\longrightarrow 
\omega$ such that $\rng(\pi)\cap w^p=\emptyset$. Apply Claim \ref{cl5} to
$w,w, \rng(\pi),\pi\circ \pi_0,\pi\circ \pi_0$ here standing for
$w,u,w^*,\pi_0,\pi_1$ there. (Note that the assumption \ref{cl5}(b) is
satisfied vacuously.)
\end{proof}  

\begin{claim}
  \label{cl9}
Assume that 
\begin{enumerate}
\item[(a)] $p\in\cP_\iota$, $u\subseteq w\subseteq w^p$, $3\leq |w|$,  
\item[(b)] $\bar{k}\big(v\cup\{d\}\big)\neq |d\cap 
  v|$ whenever $v\subseteq u$ and $d\in w\setminus u$ and 
  $\bar{r}\big(v\cup\{d\}\big)=0$, 
\item[(c)]   $\pi_0,\pi_1:w \longrightarrow \omega$ are quasi--embeddings 
  (into $S$) such that $\pi_0( a )=\pi_1( a )$  for $ a \in u$ and
  $\pi_0[w\setminus u]\cap \pi_1[w\setminus u]=\emptyset$. 
\end{enumerate}
Then there are $q\in \cP_\iota$ and a quasi--embedding
$\pi:\rng(\pi_0)\cup\rng(\pi_1) \longrightarrow w^q$ such that
$p\curlyeqprec q$ and
\[(\pi\circ\pi_0)(\gb(p)\rest w)\Subset \gb(q)\rest
  (\pi\circ\pi_0[w])\quad\mbox{ and }\quad
  (\pi\circ\pi_1)(\gb(p)\rest w)\Subset \gb(q)\rest (\pi\circ\pi_1[w]).\] 
\end{claim}

\begin{proof}[Proof of the Claim]
Using the cuteness of $S$ we first pick a quasi embedding $\pi^+:\rng(\pi_0)
\cup\rng(\pi_1)\longrightarrow \omega$ such that $\rng(\pi^+)\cap w^p
=\emptyset$. Then apply Claim \ref{cl5} to $u,w,\rng(\pi^+), \pi^+\circ
\pi_0, \pi^+\circ \pi_1$ here standing for $u,w,w^*,\pi_0,\pi_1$ there. 
\end{proof}
\medskip 

Using Claims \ref{cl14} (for (ii)), \ref{cl9} (for (iii)) and \ref{cl6} (for
(iv)), and employing a suitable bookkeeping device we may inductively choose
a sequence $\langle p_\ell:\ell<\omega\rangle \subseteq \cP_\iota$ such that 
\begin{enumerate}
\item[(i)] $p_\ell\curlyeqprec p_{\ell+1}$ for all $\ell<\omega$,
\item[(ii)] for every $k<\omega$ there is an $\ell<\omega$ such that $k\in
  w^{p_\ell}$, $n^{p_\ell}>k$, and $M^{p_\ell}>k$, 
\item[(iii)]  if (a)\quad $k<\omega$, $u\subseteq w\subseteq w^{p_k}$, $3\leq
  |w|$,   and 
\begin{enumerate}
\item[(b)] $\bar{k}\big(v\cup\{d\}\big)\neq |d\cap 
  v|$ whenever $v\subseteq u$ and $d\in w\setminus u$ and 
  $\bar{r}\big(v\cup\{d\}\big)=0$, 
\item[(c)]   $\pi_0,\pi_1:w \longrightarrow \omega$ are quasi--embeddings 
  (into $S$) such that $\pi_0( a )=\pi_1( a )$  for $ a \in u$ and
  $\pi_0[w\setminus u]\cap \pi_1[w\setminus u]=\emptyset$, 
\end{enumerate}
then there is an $\ell<\omega$ and a quasi--embedding
$\pi:\rng(\pi_0)\cup\rng(\pi_1) \longrightarrow w^{p_\ell}$ such
that 
\[(\pi\circ\pi_0)(\gb(p_k)\rest w)\Subset \gb(p_\ell)\rest
  (\pi\circ\pi_0[w])\quad\mbox{ and }\quad
  (\pi\circ\pi_1)(\gb(p_k)\rest w)\Subset \gb(p_\ell)\rest (\pi\circ\pi_1[w]),\] 
\item[(iv)] if (a)\quad $k<\omega$, $w\subseteq w^{p_k}$, $3\leq |w|$,  and 
\begin{enumerate}
\item[(b)] $\pi_0:w \longrightarrow \omega$ is a quasi--embedding 
and $\rng(\pi_0)\subseteq u\in [\omega]^{<\omega}$, 
\end{enumerate}
then there is an $\ell<\omega$ and a quasi--embedding
$\pi:u\longrightarrow w^{p_\ell}$ (into $S$) such that 
$(\pi\circ\pi_0)(\gb(p_k)\rest w)\Subset \gb(p_\ell)\rest
  (\pi\circ\pi_0[w])$.
\end{enumerate}
For $m<\omega$ let $T_m=\bigcup\{t^{p_\ell}_m:\ell<\omega\ \wedge\
m<M^{p_\ell}\}$ and note that each  $T_m$ is a subtree of ${}^{\omega>}2$
without terminal nodes. Let $\bar{T}= \langle T_m:m<\omega\rangle$ and let
$\bar{c}=\bigcup\limits_{\ell<\omega} \bar{d}^{p_\ell}$. One easily verifies that
$\bar{T}$ has $(\bar{c},S)$--controlled amalgamation property.
\bigskip

Let $\ndrk$ be the non-disjointness rank on $\Mtk$ (see Definitions
\ref{mtkDef}, \ref{ndrkdef}). Note that $\cM^{p_\ell}\subseteq \Mtk$ for
each $\ell<\omega$.   

\begin{claim}
\label{cl10}  
Assume that $N<\omega$, $w_0\subseteq w^{p_N}$, $3\leq |w_0|$, $\ell_0\leq
n^{p_N}$ and $\bn_0=\bmm^{\gb(p_N)}(\ell_0,w_0)\in \cM^{p_N}$. Then  
\begin{enumerate}
\item $\bar{r}(w_0)\leq \ndrk(\bn_0)$.  
\item If $|w_0|\geq 4$, then $\ndrk(\bn_0) \leq \omega\cdot
  \big(\bar{r}(w_0)+1\big)$  (ordinal product).   
\end{enumerate}
\end{claim}

\begin{proof}[Proof of the Claim]
(1)\quad By induction on $\alpha$ we show (for all $\bn_0,N,\ell_0,w_0$)
that $\alpha\leq \bar{r}(w_0)$ implies $\alpha\leq\ndrk(\bn_0)$. 

For the successor step, suppose that $\alpha+1\leq \bar{r}(w_0)$. Assume 
$\nu\in u_{\bn_0}$ and let $a\in w_0$ be such that
$\nu=\eta_a^{p_N}\rest\ell_0$. Let $L=|w_0|+1$, $\ell=|a\cap w_0|$ and let
$\varphi_0:w_0\longrightarrow L\setminus\{\ell+1\}$ and
$\varphi_1:w_0\longrightarrow L\setminus\{\ell\}$  be the increasing
bijections. Note  that
\begin{itemize}
\item $\varphi_0(x)=\varphi_1(x)$ whenever $x\in w_0\setminus\{a\}$, and
\item $\varphi_0^{-1}[u] =\varphi_1^{-1}[u]$ whenever $u\subseteq
  L\setminus\{ \ell,\ell+1\}$. 
\end{itemize}
We define a $\YZR(\vare)$--system $s=(X^s,\bar{r}^s,\bar{\jmath}^s,
\bar{k}^s)$ as follows. We set $X^s=L$ and for $u\subseteq X^s$ we put 
\begin{enumerate}
\item[$(\circledcirc)_1$] if $\ell\notin u$, then $\bar{r}^s(u)=\bar{r}
  \big(\varphi_1^{-1}[u]\big)$, $\bar{\jmath}^s(u)=\bar{\jmath}
  \big(\varphi_1^{-1}[u]\big)$ and $\bar{k}^s(u)=\bar{k}
  \big(\varphi_1^{-1}[u]\big)$, 
\item[$(\circledcirc)_2$] if $\ell+1\notin u$, then $\bar{r}^s(u)=\bar{r}
  \big(\varphi_0^{-1}[u]\big)$, $\bar{\jmath}^s(u)=\bar{\jmath}
  \big(\varphi_0^{-1}[u]\big)$ and $\bar{k}^s(u)=\bar{k}
  \big(\varphi_0^{-1}[u]\big)$, 
\item[$(\circledcirc)_3$]  if $\ell,\ell+1\in u$, then
  $\bar{r}^s(u)=\alpha$, $\bar{\jmath}^s(u)=\max\big(\bar{\jmath}(v):
  \emptyset\neq v\subseteq w_0\big)+1$, and $\bar{k}^s(u)=|\ell\cap u|$. 
\end{enumerate}
One easily verifies $(*)_1$, $(*)_2$ and $(*)_4$ of Definition
\ref{sysdef}. Concerning \ref{sysdef}$(*)_3$, we note that if $\emptyset
\neq u\subseteq w\subseteq X^s$ and $\{\ell,\ell+1\}\nsubseteq w$, then
$\bar{r}^s(u)\geq \bar{r}^s(w)$ by $(\circledcirc)_1+(\circledcirc)_2$ and
the properties of $\bar{r}$. If $\{\ell,\ell+1\}\subseteq w$, $\emptyset\neq
u\subseteq w$, then $\bar{r}^s(w)=\alpha$ and either $\bar{r}^s(u)=\alpha$
(when $\ell,\ell+1\in u$) or $\bar{r}^s(u)\geq\alpha+1$ (if $\{\ell,\ell+1\}
\nsubseteq u$). (Since $\bar{r}(w_0)\geq \alpha+1$, for every nonempty
$v\subseteq w_0$ we have $\bar{r}(v)\geq \alpha+1$.) Thus the only possibly
unclear demand is \ref{sysdef}$(*)_5$. So suppose that $\emptyset\neq
u\subseteq X^s$ and $u=\{a_0,\ldots,a_m\}$ is the increasing enumeration. We
want to argue that there is no $b\in X^s\setminus u$ such that
\begin{enumerate}
\item[$(\spadesuit)_b$] $|u\cap b|=\bar{k}^s(u)$, $\bar{\jmath}^s\big(
  (u\setminus \{a_{\bar{k}^s(u)}\})\cup\{b\}\big)=\bar{\jmath}^s(u)$ and
  $\bar{r}^s\big(u\cup\{b\}\big) = \bar{r}^s(u)$. 
\end{enumerate}
{\sc Case 1}:\quad $\ell\notin u$\\
By $(\circledcirc)_1$, there is no $b\in \big(X^s\setminus u\big) \setminus
\{\ell\}$ satisfying $(\spadesuit)_b$. If $\ell+1\in u$, then
$\bar{r}^s\big(u\cup\{\ell\} \big)=\alpha<\alpha+1\leq \bar{r}^s(u)$. Therefore,
$(\spadesuit)_\ell$ fails when $\ell+1\in u$. If $\ell+1\notin u$, then by
$(\circledcirc)_2$ the statement $(\spadesuit)_\ell$ must fail
too. Consequently, in the current case, there is no $b\in X^s\setminus u$
for which $(\spadesuit)_b$ holds true.\\
{\sc Case 2}:\quad $\ell+1\notin u$\\
Similar to Case 1, just interchanging $\ell$ and $\ell+1$.\\
{\sc Case 3}:\quad $\ell,\ell+1\in u$\\
Then, by $(\circledcirc)_3$, $a_{\bar{k}^s(u)}=\ell$ and for all $b\in
X^s\setminus u$ we get $\bar{\jmath}^s\big((u\setminus
\{\ell\})\cup\{b\}\big)< \bar{\jmath}^s(u)$. Thus $(\spadesuit)_b$ fails for
all $b\in X^s\setminus u$.
\bigskip

Since $S$ is cute, there is a quasi-embedding $\varphi:X^s\longrightarrow
\omega$ of $s$ into $S$ such that
$\max(w_0)<\min\big(\rng(\varphi)\big)$. Let $b=\varphi(\ell)$,
$b'=\varphi(\ell+1)$, $u=\rng(\varphi)\setminus \{b,b'\}$ and
$\pi_0=\varphi\circ \varphi_0$, $\pi_1=\varphi\circ \varphi_1$. Then 
\begin{itemize}
\item $u\cup\{b,b'\}\subseteq \omega \setminus w_0$, $b\neq b'$, and
   $\bar{r}\big(u\cup\{b,b'\}\big)=\alpha$, 
\item $\pi_0:w_0\longrightarrow u\cup\{b\}$ and  $\pi_1:w_0 \longrightarrow
  u\cup\{b'\}$ are quasi--embeddings and $\pi_0(a)=b$ and $\pi_1(a)=b'$, and
  $\pi_0\rest \big(w_0\setminus\{a\}\big)= \pi_1\rest
  \big(w_0\setminus\{a\}\big)$. 
\end{itemize}
Since $\bar{r}(w_0)\geq\alpha+1\geq 1$, we are sure that for every nonempty
$v\subseteq w_0$ we have $\bar{r}(v)\neq 0$. Therefore assumption (b) of
condition (iii) of the construction above is satisfied vacuously and we may
use that condition to claim that there are $K<\omega$ and a quasi--embedding 
$\pi: u\cup\{b,b'\}\longrightarrow w^{p_K}$ such that 
\[(\pi\circ\pi_0)(\gb(p_N)\rest w)\Subset \gb(p_K)\rest
  (\pi\circ\pi_0[w])\ \mbox{ and }\ 
  (\pi\circ\pi_1)(\gb(p_N)\rest w)\Subset \gb(p_K)\rest (\pi\circ\pi_1[w]).\] 
Note that $\bar{r}\big(\pi[u\cup\{b,b'\}]\big)=\alpha$. Let
$\bn_1=\bmm^{\gb(p_K)}(n^{p_K},\pi[u\cup\{b,b'\}])\in\cM^{p_K}\subseteq  
\Mtk$. Then $\bn_0\sqsubseteq \bn_1$, $2\leq |\{\eta\in u_{\bn_1}:
\nu\vtl\eta\}|$ and (by the inductive hypothesis)
$\alpha\leq\ndrk(\bn_1)$. 

Now we may conclude that $\ndrk(\bn_0)\geq\alpha+1$. The rest is clear. 
\bigskip

\noindent (2)\quad By induction on $\alpha$ we argue that
$\bar{r}(w_0)\leq \alpha$ implies $\ndrk(\bn_0)\leq \omega\cdot (\alpha+1)$
(for all  $\bn_0,N,\ell_0,w_0$).   

Assume first $\bar{r}(w_0)=0$ and let $a\in w_0$ be such that $|a \cap
w_0|=\bar{k}(w_0)$. Suppose that there is $\bmm\in\Mtk$ such
that  $\bmm\sqsupseteq \bn_0$ and  $2\leq |\{\nu\in u_\bmm:\eta^{p_N}_a\rest
\ell_0\vtl \nu\}|$. We may also demand that for some $K>N$ we have 
$\bmm\in {\mathbf   M}^{n^{p_K}}_{\bar{t}^{p_K},\iota}$ and
$\ell_\bmm=n^{p_K}$ and $|u_{\bmm}|=|u_{\bn_0}|+1\geq 5$.  Now use Claim
\ref{cl7} to find $\bn_1\in\cM^{p_K}$ and $\rho$ such that $\bn_1 
\doteqdot \bmm+\rho$. Let $\bn_1=\bmm^{\gb(p_K)}(n^{p_K},w_1)$. Let $b,b'\in   
w_1$ be such that   
\begin{enumerate}
\item[$(\clubsuit)$] $(\eta_a^{p_N}\rest \ell_0) +(\rho\rest \ell_0)=
  \eta_b^{p_k}\rest \ell_0=\eta_{b'}^{p_k}\rest \ell_0$, and $b\neq b'$.
\end{enumerate}
Then $\bmm_0\stackrel{\rm def}{=} \bmm^{\gb(p_K)}(\ell_0, w_1\setminus
\{b\}) \in \cM^{p_K}$ and $\bn_0+(\rho\rest \ell_0) \doteqdot \bmm_0$. 
By condition \ref{brick}$(\boxplus)_6$ for $p_K$ the order isomorphism
$\pi:w_0\longrightarrow w_1\setminus \{b\}$ is a quasi--embedding and
$(\eta_c^{p_K} \rest \ell_0)+(\rho\rest \ell_0)= \eta^{p_K}_{\pi(c)}\rest
\ell_0$. Therefore $\bar{r}(w_1\setminus \{b\})=0$ and $\bar{k}(w_1\setminus  
\{b\})=\bar{k}(w_0)=|a\cap w_0|=|b'\cap w_1|$. But then $\bmm_0\sqsubseteq
\bn$, $b,b'\in w_1$ and $(\clubsuit)$ contradict condition
\ref{brick}$(\boxplus)_7$. Consequently, $\ndrk(\bn_0)=0$.
\medskip

Assume now $0<\bar{r}(w_0)\leq \alpha$ (and the statement is true for ranks 
below $\alpha$). Let $a\in w_0$ be such that $|a \cap w_0|=\bar{k}(w_0)$ and
suppose that $\bn^*\in\Mtk$ satisfies
\[\bn_0\sqsubseteq \bn^*\ \mbox{ and }\ 
|u_{\bn^*}|=|u_{\bn_0}|+1\geq 5\ \mbox{ and }\ 2\leq |\{\nu\in
  u_{\bn^*}:\eta^{p_N}_a\rest \ell_0\vtl \nu\}|.\]  
We will argue that $\ndrk(\bn^*)<\omega\cdot (\alpha+1)$. So suppose this is not
the case and $\ndrk(\bn^*)\geq \omega\cdot \alpha+\omega$. Let $L$ be such
that $c_{h^{\bn^*}_i(\eta,\nu)}\leq L$ for all $(\eta,\nu)\in
\big(u_{\bn^*}\big)^{\langle 2\rangle}$ and $i<\iota$. Using Lemma
\ref{lemonrk}(8) and Observation \ref{stepup} we may find $\bn^+\in\Mtk$
such that
\[\bn^*\sqsubseteq \bn^+\ \mbox{ and }\ 
|u_{\bn^*}|=|u_{\bn^+}|\geq 5\ \mbox{ and }\ \ell_{\bn^+}>L\ \mbox{ and }\
\ndrk(\bn^+)\geq \omega\cdot \alpha+ 1170.\] 
Take $K>N+L$ such that $\bn^+\in {\mathbf
  M}^{n^{p_K}}_{\bar{t}^{p_K},\iota}$. Now we may find $\bn_1\in\cM^{p_K}$
which is essentially the same as a translation of $\bn^+$ (exists by Claim
\ref{cl7}), say $\bn_1= \bmm^{\gb(p_K)}(\ell_{\bn^+},w_1)$. Then for some
distinct $b,b'\in w_1$ we have   
\begin{itemize}
\item $\eta^{p_K}_b\rest \ell_0=\eta^{p_K}_{b'}\rest 
  \ell_0$, $\bmm^{\gb(p_K)}(\ell_0,w_1\setminus\{b\}),
  \bmm^{\gb(p_K)}(\ell_0,w_1\setminus\{b'\})\in \cM^{p_K}$   and  
\item $\bmm^{\gb(p_K)}(\ell_0,w_1\setminus\{b\}) \doteqdot
  \bmm^{\gb(p_K)}(\ell_0,w_1\setminus\{b'\})$, and  they are essentially the 
  same as a  translation of  $\bn_0$, and
\item the translation above maps $\eta^{p_K}_a\rest \ell_0\in
  u_{\bn_0}$ to $\eta^{p_K}_{b'}\rest \ell_0$ ($\eta^{p_K}_b\rest \ell_0$,
  respectively).   
\end{itemize}
By condition \ref{brick}$(\boxplus)_6$ for $p_K$ we know that the order
isomorphism from $w_0$ onto $w_1\setminus \{b\}$ ($w_1\setminus \{b'\}$,
respectively) is a quasi--embedding mapping $a$ onto $b'$ ($b$,
respectively). Therefore
\begin{itemize}
\item $\bar{r}(w_0)=\bar{r}(w_1\setminus\{b\})=\bar{r}(w_1\setminus
  \{b'\})$,  and  
\item $\bar{k}(w_0)=\bar{k}(w_1\setminus\{b\})=\bar{k}(w_1\setminus
  \{b'\})$, and 
\item $|w_1\cap b|=|w_1\cap b'|=|w_0\cap a|=\bar{k}(w_0)$. 
\end{itemize}
Since $\bar{r}(w_0)>0$, also $\bar{\jmath}(w_0)  =\bar{\jmath}(w_1
\setminus\{b\}) =\bar{\jmath}(w_1\setminus\{b'\})$. Therefore, by
\ref{sysdef}$(*)_5$, $\bar{r}(w_1) <\bar{r}(w_1\setminus\{b\})\leq \alpha$
and by the inductive hypothesis we get  
\[\ndrk(\bn^+)=\ndrk(\bn_1)\leq \omega\cdot (\bar{r}(w_1)+1)\leq \omega\cdot
  \alpha,\]
contradicting the choice of $\bn^+$. 
\medskip

Now we may conclude that  $\ndrk(\bn_0)\leq \omega\cdot (\alpha+1)$.   
\end{proof}

\begin{claim}
\label{cl11}  
$\vare\leq \ndrk(\bar{T})\leq\omega\cdot (\vare+2)+2$. 
\end{claim}

\begin{proof}[Proof of the Claim]
By the cuteness of $S$, there are $w\in[\omega]^{<\omega}$ with
$\bar{r}(w)=\vare$ and $|w|\geq 3$. Therefore Claim \ref{cl10}(1)
immediately implies the first inequality.

For the second inequality, suppose towards contradiction that $\bmm\in \Mtk$
is such that $\ndrk(\bmm)\geq\omega\cdot (\vare+2)+3$. Then we may pick
$\bn\in \Mtk$ such that  
\[\bmm\sqsubseteq \bn,\quad 5\leq |u_\bn|,\quad \mbox{ and }\quad 
  \ndrk(\bn)\geq\omega\cdot (\vare+2).\]
(Note that we coul dhave $|u_{\bmm}|=2$. To get $|u_\bn|\geq 5$ we may need
to drop the rank a little.) Let $L$ be such that $c_{h^{\bn}_i(\eta,\nu)}
\leq L$ for all $(\eta,\nu)\in\big(u_{\bn}\big)^{\langle 2\rangle}$ and
$i<\iota$. Like in the previous  Claim, use Observation \ref{stepup} to find
$\bn^+\in\Mtk$ such that  
\[\bn\sqsubseteq \bn^+\ \mbox{ and }\ 
|u_{\bn}|=|u_{\bn^+}|\geq 5\ \mbox{ and }\ \ell_{\bn^+}>L\ \mbox{ and }\ 
\ndrk(\bn^+)\geq \omega\cdot (\vare+1)+ 1170.\] 
Take an $N$ such that $\bn^+\in {\mathbf
  M}^{n^{p_N}}_{\bar{t}^{p_N},\iota}$. By Claim \ref{cl7} there are
$\rho\in {}^{\ell_{\bn^+}} 2$ and $\bn^*\in\cM^{p_N}$ such 
that  $(\bn^++\rho)\doteqdot \bn^*$.  But now by Claim \ref{cl10}(2) and
Lemma \ref{lemonrk}(4) we have $\ndrk(\bn^+)=\ndrk(\bn^*)\leq\omega\cdot 
(\vare+1)$, a contradiction. 
\end{proof}
\end{proof}

\section{Conclusions and Questions}

For a countable ordinal $\vare>0$ and $2\leq \iota<\omega$ let
$\bar{T}^{\vare,\iota}=\langle T_m^{\vare,\iota}:m<\omega\rangle$ be the
sequence of trees given by Theorem \ref{rankset} (for some $S$ and $\bar{c}$
as there).   Let $B_{\vare,\iota}=\bigcup\limits_{m<\omega}
\lim(T_m^{\vare,\iota})$. 

\begin{corollary}
If  $\lambda$ is a cardinal such that ${\rm NPr}^\vare(\lambda)$ holds true, 
then there exist a ccc forcing notion $\bbP$ such that    
\[\begin{array}{l}
\forces_{\bbP}\mbox{`` there is a sequence
    }\langle\rho_\alpha:\alpha<\lambda\rangle
\mbox{ of distinct elements of $\can$ such that}\\
\qquad \big|(\rho_\alpha+B_{\vare,\iota})\cap
    (\rho_\beta+B_{\vare,\iota})\big|\geq 2\iota\mbox{ for all
    }\alpha,\beta<\lambda\\
    \qquad \mbox{but there is no perfect set of such $\rho$'s. ''} 
\end{array}\]
\end{corollary}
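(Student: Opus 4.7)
The plan is to combine Theorems \ref{getcute}, \ref{rankset}, and \ref{force}, and then rule out a perfect set of $2\iota$--non-disjoint translations in the generic extension by an absoluteness argument for the non-disjointness rank. First, fix by Theorem \ref{getcute} a cute $\YZR(\vare)$--system $S$ and apply Theorem \ref{rankset} to obtain trees $\bar{T}^{\vare,\iota}$ and integers $\bar{c}$ so that $\bar{T}^{\vare,\iota}$ has the $(\bar{c},S)$--controlled amalgamation property and $\ndrk(\bar{T}^{\vare,\iota})\leq\omega\cdot(\vare+2)+2<\omega_1$. Since ${\rm NPr}^\vare(\lambda)$ is assumed, all hypotheses (1)--(5) of Theorem \ref{force} are satisfied (with $T_m^{\vare,\iota}$ here in the role of the $T_m$ there), so that theorem supplies a ccc forcing notion $\bbP$ of size $\lambda$ adding the required sequence $\langle\rho_\alpha:\alpha<\lambda\rangle$ of pairwise distinct elements of $\can$ with $|(\rho_\alpha+B_{\vare,\iota})\cap(\rho_\beta+B_{\vare,\iota})|\geq 2\iota$ for all $\alpha,\beta<\lambda$.

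The second step is to argue that in $\bV^\bbP$ no perfect set $P\subseteq\can$ can satisfy $|(\rho+B_{\vare,\iota})\cap(\rho'+B_{\vare,\iota})|\geq 2\iota$ for all $\rho,\rho'\in P$. Assume for contradiction that such a $P$ exists. Hypothesis \ref{hyp} holds for $\bar{T}^{\vare,\iota}$ in $\bV^\bbP$ (any two of the generic $\rho_\alpha$'s witness it), so Proposition \ref{eqnd} applies in $\bV^\bbP$ and yields $\NDRK^{\bV^\bbP}(\bar{T}^{\vare,\iota})=\infty$. To derive a contradiction it therefore suffices to show that the rank is absolute between $\bV$ and $\bV^\bbP$, i.e.\ $\NDRK^{\bV^\bbP}(\bar{T}^{\vare,\iota})=\NDRK^{\bV}(\bar{T}^{\vare,\iota})\leq\omega\cdot(\vare+2)+2$.

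Absoluteness follows from the observation that the family $\Mtk$ (Definition \ref{mtkDef}) and the extension order $\sqsubseteq$ (Definition \ref{extDef}) depend only on the sequence of trees $\bar{T}^{\vare,\iota}\in\bV$ and consist of purely finite data; consequently $\Mtk^{\bV}=\Mtk^{\bV^\bbP}$ and the relation $\sqsubseteq$ coincides in both models. A routine transfinite induction on $\alpha<\omega_1^{\bV}$ then gives $\ndrk^{\bV^\bbP}(\bmm)=\ndrk^{\bV}(\bmm)$ for every $\bmm\in\Mtk$: at the successor step every witness $\bn\sqsupseteq\bmm$ in $\bV^\bbP$ already lies in $\bV$ and the inductive hypothesis applies to it, while every witness in $\bV$ trivially remains one in $\bV^\bbP$. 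The only potential obstacle is this absoluteness step, but it is standard once $\Mtk$ is recognized as a $\bV$--absolute countable set on which the rank recursion has length bounded in $\bV$; no combinatorics beyond what was developed in Theorems \ref{rankset} and \ref{force} is required.
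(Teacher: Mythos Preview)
Your overall strategy is exactly what the paper intends (the corollary is stated there without proof, as an immediate consequence of Theorems \ref{getcute}, \ref{rankset}, \ref{force} together with Proposition \ref{eqnd}), and your absoluteness argument for the ``no perfect set'' clause in $\bV^{\bbP}$ is the right idea. One point needs correction, however: your justification that $\Mtk^{\bV}=\Mtk^{\bV^{\bbP}}$ because the elements ``consist of purely finite data'' is not quite right. The tuples $\bmm=(\ell,u,\bar h,\bar g)$ are indeed finite objects, but clause~(f) of Definition~\ref{mtkDef} is an existential requirement on \emph{infinite} branches $F(\eta)\in\can$ and $G_i(\eta,\nu)\in\lim(T_{h_i(\eta,\nu)})$ satisfying certain equalities. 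So membership in $\Mtk$ is not decided by the finite tuple alone.

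The repair is standard: for a fixed finite tuple, clause~(f) is a $\Sigma^1_1$ statement (an existential real quantifier followed by a closed predicate), hence absolute between $\bV$ and any forcing extension by Mostowski/Shoenfield absoluteness. With this correction your argument goes through unchanged: $\Mtk$ and $\sqsubseteq$ are absolute, the rank recursion has countable length in $\bV$, so $\ndrk^{\bV^{\bbP}}(\bmm)=\ndrk^{\bV}(\bmm)$ for all $\bmm$, and Proposition~\ref{eqnd} in $\bV^{\bbP}$ rules out a perfect set of $2\iota$--non-disjoint translations.
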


\begin{proof}
  Since $\bar{T}^{\vare,\iota}$ has the $(\bar{c},S)$-controlled
  amalgamation property and ${\rm NPr}^\vare(\lambda)$ holds true, we may
  use Theorem \ref{force} to find a ccc forcing notion $\bbP$ which adds
  $\lambda$ many translations of $B_{\vare,\iota}$ with intersections of
  size at least $2\iota$. On the other hand, since
  $\ndrk(\bar{T}^{\vare,\iota}) <\omega_1$ and the rank $\ndrk$ is absolute,
  it follows from Proposition \ref{eqnd} that in no extension there is
  perfect set of such translations.
\end{proof}

\begin{corollary}
  Assume MA and $\aleph_\vare<\con$.  Then
  \begin{itemize}
  \item   there is a sequence $\langle\rho_\alpha:\alpha<\aleph_\vare
    \rangle$ of distinct elements of $\can$ such that for
    $\alpha,\beta<\aleph_\vare$ 
\[\big|(\rho_\alpha+B_{\vare,\iota})\cap 
  (\rho_\beta+B_{\vare,\iota})\big|\geq 2\iota,\]
\item for every perfect set $P\subseteq \can$ there are $\eta,\nu\in P$ such
  that
\[\big|(\eta+B_{\vare,\iota})\cap  (\nu+B_{\vare,\iota})\big|< 2\iota.\]
  \end{itemize}
\end{corollary}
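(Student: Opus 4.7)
The plan is to derive both clauses from results already in hand: the first clause from Theorem \ref{force} combined with Martin's Axiom, and the second clause directly from Proposition \ref{eqnd} and the rank bound in Theorem \ref{rankset}.

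For the first clause, I would set $\lambda = \aleph_\vare$ and invoke Proposition \ref{cl1.7-522}(4) to get ${\rm NPr}^\vare(\aleph_\vare)$. Since $\bar{T}^{\vare,\iota}$ was chosen precisely so that it has $(\bar{c},S)$--controlled amalgamation property for a cute $\YZR(\vare)$--system $S$ (Theorem \ref{rankset}), I can apply Theorem \ref{force} to obtain a ccc forcing notion $\bbP$ of size $\aleph_\vare$ that generically adds the desired sequence $\langle \name{\eta}_\alpha : \alpha < \aleph_\vare\rangle$. The key observation is that the actual generic sequence can be manufactured inside the ground model under MA: the relevant dense subsets of $\bbP$ are exactly the sets $D_\alpha = \{p : \alpha \in u^p\}$ for $\alpha < \aleph_\vare$ and $D^n = \{p : n^p > n\}$ for $n < \omega$ identified in Claim \ref{cl2}. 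There are only $\aleph_\vare + \aleph_0 = \aleph_\vare < \con$ of these, so MA provides a filter $G \subseteq \bbP$ meeting them all. Reading off $\rho_\alpha = \bigcup\{\eta^p_\alpha : p\in G,\ \alpha\in u^p\}$ and using condition $(\boxtimes)_5$ (i.e. that $g^p_i(\alpha,\beta),g^p_i(\beta,\alpha)$ lie in $\bigcup_m (T^{\vare,\iota}_m\cap {}^{n^p}2)$ with $\eta^p_\alpha+g^p_i(\alpha,\beta) = \eta^p_\beta + g^p_i(\beta,\alpha)$), one gets $\iota$ pairs $(x,x+\rho_\alpha+\rho_\beta)$ of distinct elements of $(B_{\vare,\iota}+\rho_\alpha)\cap(B_{\vare,\iota}+\rho_\beta)$, yielding $\geq 2\iota$ elements as required.

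For the second clause, I would argue purely from the rank bound and not touch MA at all. Theorem \ref{rankset}(2) asserts $\ndrk(\bar{T}^{\vare,\iota}) \leq \omega\cdot(\vare+2)+2$, which is an ordinal strictly below $\omega_1$ (since $\vare<\omega_1$). Suppose towards contradiction that there is a perfect set $P\subseteq\can$ with $|(\eta+B_{\vare,\iota})\cap(\nu+B_{\vare,\iota})|\geq 2\iota$ for all $\eta,\nu\in P$. Then condition (c) of Proposition \ref{eqnd} holds, so by $(c)\Rightarrow(b)$ we get $\NDRK(\bar{T}^{\vare,\iota}) = \infty$, contradicting the upper bound. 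Hence no such perfect set exists.

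The main technical point, and the only part requiring genuine verification, is the first clause: namely that the density of $D_\alpha$ and $D^n$ (already established in Claim \ref{cl2}) together with the ccc of $\bbP$ (Claim \ref{cl4}, via the Knaster property) really do transfer the forcing construction of Theorem \ref{force} into a ZFC + MA statement about the ground model. Once one accepts this transfer, everything else is bookkeeping: reading off $\rho_\alpha$ from the generic filter and checking that distinct conditions below a common stronger condition force distinct $\eta$'s (which is immediate from the linear independence clause $(\boxtimes)_6$ inherited from $\cP_\iota$, guaranteeing $\eta^p_\alpha \neq \eta^p_\beta$ for distinct $\alpha,\beta\in u^p$). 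No obstacle beyond this routine translation is anticipated, because the rank bound side of the argument is a one-line appeal to Proposition \ref{eqnd}.
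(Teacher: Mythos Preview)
Your argument is correct and follows exactly the route the paper intends: the paper states this corollary without proof, treating it as an immediate consequence of Theorem~\ref{force} (via MA applied to the ccc poset $\bbP$ and the dense sets of Claim~\ref{cl2}) together with the rank bound of Theorem~\ref{rankset}(2) plugged into Proposition~\ref{eqnd}. One cosmetic remark: the labels $(\boxtimes)_5$ and $(\boxtimes)_6$ you cite belong to the auxiliary poset $\cP_\iota$ in the proof of Theorem~\ref{rankset}, not to $\bbP$; the properties you need for conditions $p\in\bbP$ (namely $\eta^p_\alpha+g^p_i(\alpha,\beta)=\eta^p_\beta+g^p_i(\beta,\alpha)$ and the distinctness of the $\eta^p_\alpha$) come instead from the brick structure via Definition~\ref{brick}$(\boxplus)_2$,$(\boxplus)_4$ and Definition~\ref{mtkDef}(d),(e).
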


\begin{proof}
It follows from Proposition \ref{cl1.7-522} that ${\rm
  NPr}^\vare(\aleph_\vare)$ holds true. Hence Theorem \ref{force} gives us a
ccc  forcing notion $\bbP$ for $\aleph_\vare$. Applying MA for $\bbP$ and
its dense subsets $D_\alpha,D^n$ (for $\alpha<\aleph_\vare$ and $n<\omega$;
see Claim \ref{cl2}) we get the required sequence $\langle
\rho_\alpha:\alpha<\aleph_\vare\rangle$. Nonexistence of a perfect set of
$2\iota$--non-disjoint translations is the consequence of
$\ndrk(\bar{T}^{\vare,\iota})<\omega_1$. 
\end{proof}

\begin{corollary}
  \label{answ}
  There exists a sequence
  $\langle\eta_\alpha:\alpha<\omega_1\rangle$ of distinct elements of $\can$
  such that $\big|(\rho_\alpha+B_{\vare,\iota})\cap
  (\rho_\beta+B_{\vare,\iota})\big|\geq 2\iota$ for   all
  $\alpha,\beta<\omega_1$, but there is no perfect set of such $\eta$'s. 
\end{corollary}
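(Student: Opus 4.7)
The corollary naturally splits into the negative assertion (no perfect set of $2\iota$-overlapping translations) and the positive assertion (existence of an $\omega_1$-sized family of such translations), and I would treat them independently. The negative assertion is immediate from material already set up. By Theorem~\ref{rankset}(2), $\NDRK(\bar{T}^{\vare,\iota})\leq \omega\cdot(\vare+2)+2$, and since $\vare<\omega_1$ this upper bound is a countable ordinal, hence strictly less than $\omega_1$. By Proposition~\ref{eqnd}, the existence of a perfect $P\subseteq\can$ with $|(\eta+B_{\vare,\iota})\cap (\nu+B_{\vare,\iota})|\geq 2\iota$ for all $\eta,\nu\in P$ is equivalent to $\NDRK(\bar{T}^{\vare,\iota})\geq\omega_1$, which is now ruled out.

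For the existence of the $\omega_1$-sized family, my plan is to invoke Theorem~\ref{force} with $\lambda=\aleph_1$. Its hypotheses are met in ZFC: Theorem~\ref{rankset}(1) supplies the $(\bar{c},S)$-controlled amalgamation property for $\bar{T}^{\vare,\iota}$, while Proposition~\ref{cl1.7-522}(1) gives $\NPr^1(\aleph_1)$, which implies $\NPr^\vare(\aleph_1)$ for every $\vare\geq 1$ by the monotonicity of the superscript condition ``$1+\rk(w,\bbM^*)\leq \vare$'' in $\vare$. Theorem~\ref{force} then yields a ccc forcing $\bbP$ of size $\aleph_1$ that adds a sequence $\langle\name{\eta}_\alpha:\alpha<\aleph_1\rangle$ with the desired overlap property. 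To pass from the generic extension to $V$ itself, I would unfold this generic construction into a direct transfinite recursion in $V$ of length $\omega_1$: the forcing conditions are finite combinatorial data (bricks from $\bar{\gb}$ paired with quasi-embeddings of the finite system $s(u^p)$ into the cute system $S$), and the central amalgamation step in the Knaster proof (Claim~\ref{cl5}, which is a ZFC theorem) allows one to add one further ordinal $\alpha$ to the current finite configuration. Proceeding stage by stage, I would choose $\eta_\alpha\in\can$ together with the witnesses $g_i(\alpha,\beta), g_i(\beta,\alpha)\in B_{\vare,\iota}$ so that for every finite $F\subseteq\alpha+1$ the restriction $\langle\eta_\gamma\rest n:\gamma\in F\rangle$ for suitably large $n$ is realized as a brick in $\bar{\gb}$ via a quasi-embedding of $s(F)$ into $S$.

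The main obstacle is the bookkeeping at countable limit stages, where one must coherently accommodate countably many previously constructed translations inside a single new $\eta_\alpha\in\can$. This is handled by enumerating $\alpha=\{\beta_n:n<\omega\}$ and building $\eta_\alpha$ in levels $\eta_\alpha\rest N_0\vtl \eta_\alpha\rest N_1\vtl\cdots$, at level $n$ using the amalgamation property of Definition~\ref{defamal}(3) to extend the current brick on $\{\eta_{\beta_0},\ldots,\eta_{\beta_{n-1}},\eta_\alpha\}\rest N_n$ to one that also absorbs $\eta_{\beta_n}$ (and updates the already-chosen $g_i$'s with longer initial segments). The $(\bar{c},S)$-controlled amalgamation property of $\bar{T}^{\vare,\iota}$ is precisely what makes these successive extensions possible, so once the bookkeeping is set up the construction goes through.
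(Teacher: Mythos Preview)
Your treatment of the negative assertion is exactly the paper's: the rank bound from Theorem~\ref{rankset}(2) together with Proposition~\ref{eqnd} rules out a perfect set of $2\iota$--overlapping translations.

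For the positive assertion, the paper takes a different and much shorter route than yours. It does invoke Theorem~\ref{force} with $\lambda=\aleph_1$ to produce a ccc forcing $\bbP$ adding the desired sequence, but then, rather than unfolding the generic construction into a transfinite recursion in $V$, it appeals to absoluteness: the statement $(\divideontimes)$ (``there exist uncountably many pairwise $2\iota$--overlapping translations of $B_{\vare,\iota}$'') is expressible in $\cL_{\omega_1\omega}(Q)$, and by Keisler's completeness theorem for that logic (as used in Kubi\'s--Shelah \cite[Proposition~3.2]{KbSh:802}) such statements are absolute between $V$ and its ccc extensions. Hence if $(\divideontimes)$ holds in $V^{\bbP}$ it already holds in $V$, and we are done.

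Your direct-construction plan is a reasonable instinct, but as written it has a gap at the limit-stage bookkeeping. The amalgamation clause of Definition~\ref{defamal}(3) merges two \emph{isomorphic} copies of a single brick $\gb_n\rest w$ via quasi-embeddings $\pi_0,\pi_1$ agreeing on a common part; this is exactly what the $\Delta$--system cleaning in the Knaster proof (Claim~\ref{cl4}) arranges, and the crucial side condition~(b) there ($(\oplus)_5$) is extracted from the \emph{uncountability} of the cleaned family. In your level-by-level construction of $\eta_\alpha$, however, you must amalgamate the current partial brick on $\{\beta_0,\ldots,\beta_{n-1},\alpha\}$ with an already fully determined brick on $\{\beta_0,\ldots,\beta_{n-1},\beta_n\}$; these need not be two images of one $\gb_K\rest w$, and you have no uncountable reservoir from which to extract condition~(b). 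You would need a substantially stronger inductive invariant (e.g., that every finite $F\subseteq\alpha$ is realized by a condition in $\bbP$ in a coherently directed way, with the quasi-embeddings tracked throughout) to make this go through. That may well be possible, but it is real work that your sketch does not supply; the paper's absoluteness argument sidesteps all of it in two lines.
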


\begin{proof}
Since $\ndrk\big(\bar{T}^{\vare,\iota}\big)<\omega_1$ we know that there is
no perfect set $P\subseteq \can$ with the property that
$\big|(\rho_0+B_{\vare,\iota})\cap (\rho_1+B_{\vare,\iota})\big|\geq 2\iota$
for all $\rho_0,\rho_1\in P$. On the other hand, by Theorem \ref{force},
there is a ccc forcing notion forcing that
\begin{enumerate}
\item[$(\divideontimes)$]  ``there is a sequence
  $\langle\eta_\alpha:\alpha<\omega_1\rangle$ of distinct 
elements of $\can$ such that $\big|(\rho_\alpha+B_{\vare,\iota})\cap
(\rho_\beta+B_{\vare,\iota})\big|\geq 2\iota$ for all
$\alpha,\beta<\omega_1$''. 
\end{enumerate}
By Keisler's Completeness Theorem for logic with the quantifier ``there
exists uncountably many'' \cite[Theorem 4.10]{Ke70}, the assertion in
$(\divideontimes)$ is absolute between $\bV$ and it ccc forcing
extensions. For the sake of completeness, let us present this argument in
some detail. 

The spectrum of translation $2\iota$--non-disjointness
\[\stnd_{2\iota}(B_{\vare,\iota})\stackrel{\rm
    def}{=}\{(x,y)\in\can\times\can: |(B_{\vare,\iota}+x)\cap
  (B_{\vare,\iota}+y)|\geq 2\iota\}\]
of the $\Sigma^0_2$ set $B_{\vare,\iota}$ is a $\Sigma^0_2$ subset of the
product $\can\times\can$. Let $F^*_m\subseteq \can\times\can$ be closed sets
such that $\stnd_{2\iota}(B_{\vare,\iota})=\bigcup\limits_{m<\omega} F^*_m$
and let
\[S^*_m=\big\{(x\rest n,y\rest n): (x,y)\in F^*_m\ \&\
  n<\omega\big\}.\]
The assertion $(\divideontimes)$ means that there is an uncountable set
$D\subseteq \can$ such that $D\times D\subseteq
\stnd_{2\iota}(B_{\vare,\iota})$. To show the downward absoluteness of
$(\divideontimes)$, we will essentially repeat the arguments in the proof
of Kubi\'s and Shelah \cite[Proposition 3.2]{KbSh:802} (following also the
lines presented in Farah, Hru\v{s}\'{a}k and Mart\'{i}nez \cite{FHM05}).   

The language $\cL_{\omega_1\omega}$  (see Keisler \cite{Ke71}) is formed
from the first order $\cL$ by allowing countable infinite conjunctions, but
only finite quantifiers. Then the language $\cL_{\omega_1\omega}(Q)$  is
formed by adding to $\cL_{\omega_1\omega}$  the additional quantifier $(Qx)$
with the meaning ``there are uncountably many $x\ldots$''. The axioms for
$\cL_{\omega_1\omega}(Q)$ include:
\begin{itemize}
\item all the axiom schemes for $\cL$,
\item $\neg(Qx)(x= y\vee x= z)$,
\item $(\forall x)(\varphi\Rightarrow\psi)\Rightarrow \big((Qx)\varphi
  \Rightarrow (Qx)\psi\big)$\qquad
  where $\varphi,\psi$ are formulas,
\item $(Qx)\varphi(x\ldots)\Leftrightarrow (Qy) \varphi(y\ldots)$\qquad
where $\varphi(x\ldots)$ is a formula in which $y$ does not occur and
$\varphi(y\ldots)$ is obtained by repalcing each free occurence of $x$ by
$y$,
\item $(Qy)(\exists x)\varphi\Rightarrow \Big((\exists x)(Qy)\varphi\vee
  (Qx)(\exists y)\varphi\Big)$\qquad
where $\varphi$ is a formula, 
\item $\bigwedge\limits_{n<\omega} (\varphi\Rightarrow \psi_n) \Rightarrow
  (\varphi \Rightarrow \bigwedge\limits_{n<\omega} \psi_n)$ 
\item $\bigwedge\limits_{n<\omega} \psi_n\Rightarrow \psi_m$ \quad (for
  $m<\omega$),
\item $(Qx)\bigvee\limits_{n<\omega} \psi_n\Rightarrow
  \bigvee\limits_{n<\omega} (Qx)\psi_n$\quad where
  $\bigvee\limits_{n<\omega} \chi_n$ is an abbreviation for
  $\neg\bigwedge\limits_{n<\omega} \neg\chi_n$.  
\end{itemize}
The rules of inference are {\em modus ponens}, {\em generalization} and the
{\em rule of conjunction}:

\quad from $\psi_0,\psi_1,\psi_2,\ldots$ infer $\bigwedge\limits_{n<\omega}
\psi_n$. 

\noindent A standard model in $\cL_{\omega_1\omega}(Q)$ is a model $\bbM$ of
the first order language of $\cL$, where the notion of satisfaction is
defined in natural way with the $(Qx)$ clause in the definition being 

\quad $\bbM\models (Qv_m)\varphi[a_1,\ldots,a_n]$\quad if and only if 

\quad the set $\{b\in M:\bbM\models \varphi[a_1,\ldots,
a_{m-1},b,a_{m+1},\ldots, a_n]\}$ is uncountable.

\noindent The Completeness Theorem for  $\cL_{\omega_1\omega}(Q)$
\cite[Theorem 4.10]{Ke70} asserts that for a language
$\cL_{\omega_1\omega}(Q)$ with a countable vocabulary, {\em a sentence
  $\varphi$ of $\cL_{\omega_1\omega}(Q)$ is consistent if and only if
  $\varphi$ has a standard model.}  

We shall use countably many predicates and function symbols and the
quantifier $Q$ to describe an uncountable set $D$ of pairwise
$2\iota$-non-disjoint translations of
$B_{\vare,\iota}=\bigcup\limits_{m<\omega} \lim(T^{\vare,\iota}_m)$, i.e.,
$D\subseteq \stnd_{2\iota}(B_{\vare,\iota})$.

Let $C$ and $D$ be unary predicates intentionally describing the Cantor
space and its uncountable subset $D$. Constants 0,1 will be used in the
usual way and unary function symbols $P_k(x)$ will mean ``the $k$-th
coordinate of $x$'' (for $k<\omega$). Now we define a sentence  $\theta$
in the language $\cL_{\omega_1\omega}(Q)$ for the vocabulary
$\{C,D,0,1,P_k\}_{k<\omega}$.

First, $\theta^*$ is the conjunction of the following two sentences:
\begin{itemize}
\item $\big(\forall x\big)\big(C(x)\Rightarrow \bigwedge\limits_{k<\omega}(
  P_k(x)=0\vee P_k(x)=1)\big)$,
\item $\big(\forall x,y\big)\big((C(x)\wedge C(y)\wedge x\neq y)\Rightarrow
  \bigvee\limits_{k<\omega} P_k(x)\neq P_k(y) \big)$. 
\end{itemize}
Now, for finite sequences $\rho,\sigma\in {}^n 2$, a formula
$\phi^{\sigma,\rho}(x,y)$ is
\[C(x)\wedge C(y)\wedge \bigwedge\limits_{k<n}\big( P_k(x)=\sigma(k)\wedge
  P_k(y)=\rho(k)\big)\]
and then for $n,m<\omega$, a formula $\psi^n_m(x,y)$ is
\[C(x)\wedge C(y)\wedge \bigvee\limits_{(\sigma,\rho)\in S^*_m\cap
    ({}^n2\times {}^n 2)} \varphi^{\sigma,\rho}(x,y).\]
Finally, the formula $\psi_m(x,y)$ is $\bigwedge\limits_{n<\omega}
\psi^n_m(x,y)$. Now let $\theta$ be the sentence
\[\theta^*\wedge (Qx)D(x)\wedge (\forall x)(D(x)\Rightarrow C(x))\wedge
    \big(\forall x,y\big)\big((D(x)\wedge D(y))\Rightarrow
    \bigvee\limits_{m<\omega} \psi_m(x,y)\big).\]
It should be clear that every standard model of $\theta$ determines an
uncountable square included in $\stnd_{2\iota}(B_{\vare,\iota})$ (and vice
versa). So we may argue for the absoluteness of $(\divideontimes)$ as
follows.

Suppose $(\divideontimes)$ holds in $\bV[G]$. Then, in $\bV[G]$, the
sentence $\theta$ has a standard model, so it is consistent. But this means
that $\theta$ must be consistent in $\bV$, as otherwise we would have a
proof of $\neg\theta$ in $\bV$. Such a proof, while possibly infinite, can
be thought of as a labeled well founded tree. Therefore the same proof would
work in $\bV[G]$ as well, giving a contradiction. Consequently, the sentence
$\theta$ is consistent in $\bV$ and it has a standard model in $\bV$. Hence
$(\divideontimes)$ holds in $\bV$.
\end{proof}
\bigskip

The results presented in this paper leave several natural questions
open. First of all,

\begin{problem}
  What is the value of $\ndrk\big(\bar{T}^{\vare,\iota}\big)$ ?
\end{problem}

A natural question is if we can replace the amalgamation property in Theorem
\ref{force} with a requirement on the rank $\ndrk(\bar{T})$. In the
strongest form this would be the following question.

\begin{problem}
  Suppose that
  \begin{enumerate}
\item[(a)] $T_m\subseteq {}^{\omega>} 2$ (for $m<\omega$) are trees with no  
  maximal nodes, $\bar{T}=\langle T_m:m<\omega\rangle$, and 
\item[(b)]   $\ndrk$ is the non-disjointness rank on $\Mtk$, $2\leq
  \iota<\omega$,
\item[(c)]  $\vare\leq \ndrk(\bar{T})$, and $\lambda$ is a cardinal such
  that ${\rm NPr}^\vare(\lambda)$ holds true, 
\item[(d)] $B=\bigcup\limits_{m<\omega} \lim(T_m)$.
\end{enumerate}
Does there exist a ccc forcing notion $\bbP$ of size $\lambda$ such that    
\[\begin{array}{l}
\forces_{\bbP}\mbox{`` there is a sequence
    }\langle\eta_\alpha:\alpha<\lambda\rangle
\mbox{ of distinct elements of $\can$ such that}\\
\qquad \big|(\rho_\alpha+B)\cap
(\rho_\beta+B)\big|\geq 2\iota\mbox{ for all }\alpha,\beta<\lambda\mbox{
    ''\ \ ?} 
\end{array}\]
\end{problem}

The relevance of $\iota$ is yet to be discovered:

\begin{problem}
Does there exist a sequence $\bar{T}=\langle T_m:m<\omega\rangle$ of trees
$T_m\subseteq {}^{\omega>} 2$ (for $m<\omega$) such that for some
$2\leq \iota<\iota'<\omega$ we have $\ndrk(\bar{T})\neq {\rm
  ndrk}_{\iota'}(\bar{T})$ ? 
\end{problem}

Of course, the next steps could be to investigate $\stnd_\omega$ and
$\stnd_{\omega_1}$:

\begin{problem}
Is is consistent to have a Borel set $B\subseteq \can$ such that 
\begin{itemize}
\item for some uncountable set $H$, $(B+x)\cap (B+y)$ is uncountable for
  every $x,y\in H$, but 
\item for every perfect set $P$ there are $x,y\in P$ with $(B+x)\cap (B+y)$
  countable?
\end{itemize}
Similarly if ``uncountable / countable'' are replaced with ``infinite /
finite'', respectively. 
\end{problem}

As mentioned before, our arguments relay on the algebraic properties of 
$\can$. So, one should ask for the following. (The constructions in
\cite{RoSh:1187} might be relevant here.) 

\begin{problem}
Generalize the results of this paper (Theorems \ref{force} and 
\ref{rankset}) to the case of Polish groups (not just $\can$).   
\end{problem}

Hopefully, the investigations of $\stnd$ will shed some light on the dual
case of $\std_\kappa$. In particular:

\begin{problem}
Is it consistent to have a Borel set $B\subseteq \can$ such that 
\begin{itemize}
\item $B$ has uncountably many pairwise disjoint translations, but
\item there is no perfect set of pairwise disjoint translations of $B$ ? 
\end{itemize}
\end{problem}

Finally, let us recall the big question concerning the ``cutting point'' in
this considerations. let  $\lambda_{\omega_1}$ be the smallest $\lambda$
such that ${\rm Pr}^\vare(\lambda)$ holds true for all $\vare<\omega_1$ (see
Definition \ref{PRdef}).

\begin{problem}
Is $\lambda_{\omega_1}=\aleph_{\omega_1}$ ?  
\end{problem}

\bigskip \bigskip \bigskip

\end{document}